\newtheorem{thm}{Theorem}[section]
\newtheorem{prop}[thm]{Proposition}
\newtheorem{lem}[thm]{Lemma}
\newtheorem{conj}[thm]{Conjecture}
\theoremstyle{definition}
\newtheorem{defn}[thm]{Definition}
\theoremstyle{remark}
\let\c@equation\c@thm
\numberwithin{equation}{section}
\title{Numbers with integer complexity close to the lower bound}
\author{Harry Altman and Joshua Zelinsky}
\date{July 16, 2012}
\begin{document}

\newcommand{\cpx}[1]{\|#1\|}
\newcommand{\dft}{\delta}

\newcommand{\N}{{\mathbb N}}
\newcommand{\R}{{\mathbb R}}
\newcommand{\Z}{{\mathbb Z}}
\newcommand{\Q}{{\mathbb Q}}

\begin{abstract}
Define $\cpx{n}$ to be the \emph{complexity} of $n$, the smallest
number of $1$'s needed to write $n$ using an arbitrary combination of
addition and multiplication.
John Selfridge showed that $\cpx{n}\ge 3\log_3 n$
for all $n$.  Define the \emph{defect} of $n$, denoted $\dft(n)$, to be
$\cpx{n}-3\log_3 n$; in this paper we present a method for classifying all $n$
with $\dft(n)<r$ for a given $r$.  From this, we derive several
consequences.  We prove that $\cpx{2^m 3^k}=2m+3k$ for $m\le 21$ with $m$ and
$k$ not both zero, and present a method that can, with more computation,
potentially prove the same for larger $m$.  Furthermore, defining $A_r(x)$ to be
the number of $n$ with $\dft(n)<r$ and $n\le x$, we prove that
$A_r(x)=\Theta_r((\log x)^{\lfloor r \rfloor+1})$, allowing us to conclude that
the values of $\cpx{n}-3\log_3 n$ can be arbitrarily large.
\end{abstract}

\maketitle

\section{Introduction}

The complexity of a natural number $n$ is the least number of $1$'s needed to
write it using any combination of addition and multiplication, with the order of
the operations specified using  parentheses grouped in any legal nesting.  For
instance, $11$ has complexity of $8$, since it can be written using $8$ ones as
$(1+1+1)(1+1+1)+1+1$, but not with any fewer.  This notion was introduced by
Kurt Mahler and Jan Popken in 1953 \cite{MP}.  It was later circulated by
Richard Guy \cite{Guy}, who includes it as problem F26 in his \emph{Unsolved
Problems in Number Theory} \cite{UPINT}. It has since been studied by a number
of authors, e.g. Rawsthorne \cite{Raws} and especially Juan Arias de Reyna
\cite{Arias}.

Following Arias de Reyna \cite{Arias} we will
denote the complexity of $n$ by $\cpx{n}$. Notice that for any
natural numbers $n$ and $m$ we will have
\begin{equation*}
\cpx{1}=1, \quad
\cpx{n+m}\le \cpx{n}+\cpx{m},\quad
\cpx{nm}\le \cpx{n}+\cpx{m},\quad
\end{equation*}

More specifically, for any $n>1$, we have
\begin{displaymath}
\cpx{n}=\min_{\substack{a,b<n\in \mathbb{N} \\ a+b=n\ \mathrm{or}\ ab=n}}
	\cpx{a}+\cpx{b}.
\end{displaymath}
This fact together with $\cpx{1}=1$ allows one to compute $\cpx{n}$ recursively.
If  the equality $\cpx{n} = \cpx{a}+ \cpx{b}$ holds, with either $n=a+b$ or
$n=ab$, then we will say $n$ can be written \emph{most-efficiently} as $a+b$ or
as $ab$, respectively.

Integer complexity is approximately logarithmic; it satisfies the bounds
\begin{equation*}
3\log_3 n\le \cpx{n} \le 3\log_2 n,\qquad n>1.
\end{equation*}
The upper bound can be obtained by writing $n$ in binary and finding a
representation using Horner's algorithm. The lower bound follows from results
described below. The lower bound is known to be attained infinitely often,
namely for all $n=3^k$. The constant in the upper bound above can be improved further \cite{upbds}, 
and it is an open problem to determine the true
asymptotic order of magnitude of the upper bound. At present even the
possibility that an asymptotic formula $\cpx{n} \sim 3 \log_3 n$ might hold has
not been ruled out.

Let $E(k)$ be the largest number writable with $k$ ones, i.e., with complexity
at most $k$.
John Selfridge (see \cite{Guy}) proved that $E(1) =1$, and the larger values
depend on the residue class of $k$ modulo $3$, namely for $k=3j +i \ge 2$,
\begin{eqnarray*}
E(3j) &=&3^j\\
E(3j+1) &=& 4 \cdot 3^{j-1} \\
E(3j+2) &= & 2 \cdot 3^j
\end{eqnarray*}
Observe that $E(k)\le 3^{k/3}$ in all cases, and that equality holds for
cases where $3$ divides $k$.
These formulas also show that
$E(k) > E(k-1)$, a fact that implies that the integer 
$E(k)$ requires exactly
$k$ ones. This yields the following result:
\begin{thm} \label{th1}
For  $a=0, 1,2$ and for all $k \ge 0$ with $a+k \ge 1$, one has
\begin{equation*}\label{3m}
\cpx{2^a \cdot3^k}=2a +3k.
\end{equation*}
\end{thm}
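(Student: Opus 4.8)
The plan is to recognize each of the numbers $2^a 3^k$ appearing in the statement as one of Selfridge's extremal values $E(j)$, and then appeal to the observation made just above that the integer $E(j)$ requires exactly $j$ ones.

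First I would check the identity $2^a 3^k = E(2a + 3k)$ for $a \in \{0,1,2\}$ and $a+k \ge 1$. This splits into three cases, one for each value of $a$, and these are precisely the three residue classes of $2a + 3k$ modulo $3$. For $a = 0$ (so $k \ge 1$), $2a+3k = 3k$ and $E(3k) = 3^k = 2^0 3^k$. For $a = 1$, $2a+3k = 3k+2$ and $E(3k+2) = 2 \cdot 3^k = 2^1 3^k$. For $a = 2$, $2a + 3k = 3k + 4 = 3(k+1) + 1$, so $E(3k+4) = 4 \cdot 3^{(k+1)-1} = 4 \cdot 3^k = 2^2 3^k$. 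In every case the hypothesis $a + k \ge 1$ guarantees $2a + 3k \ge 2$, so the relevant Selfridge formula applies on the nose, including the degenerate instances $2 = E(2)$ and $4 = E(4)$.

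Once the identity is in hand, the theorem is immediate: $\cpx{2^a 3^k} = \cpx{E(2a+3k)} = 2a + 3k$. If one wishes to avoid quoting ``$E(j)$ requires exactly $j$ ones'' directly, the same result follows by pairing the trivial upper bound $\cpx{2^a 3^k} \le 2a + 3k$ (write every factor $2$ as $1+1$ and every factor $3$ as $1+1+1$, then multiply) with the lower bound: if $\cpx{2^a 3^k} \le 2a+3k-1$ then $2^a 3^k \le E(2a+3k-1)$, and evaluating $E$ at this smaller argument in each residue class gives a value strictly below $2^a 3^k$ --- for example $E(3k-1) = 2 \cdot 3^{k-1} < 3^k$ when $a = 0$ --- a contradiction. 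In either presentation there is no substantial obstacle; the entire content sits in the formulas for $E(k)$ already recorded, and the only care needed is the routine bookkeeping of residue classes modulo $3$ and the handling of the small-$k$ boundary cases, which is exactly where the hypothesis $a + k \ge 1$ is used.
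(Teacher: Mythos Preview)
Your argument is correct and is exactly the paper's approach: the paper derives Theorem~\ref{th1} directly from Selfridge's formulas by observing that $E(k)>E(k-1)$ forces $\cpx{E(k)}=k$, and your proof simply makes explicit the identification $2^a3^k=E(2a+3k)$ in each residue class. The alternative upper/lower bound presentation you sketch is just an unpacking of the same fact.
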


Further results are known on the largest possible integers having a given
complexity. We can generalize the notion of $E(k)$ with the following
definition:

\begin{defn}
Define $E_r(k)$ to be the $(r+1)$-th largest number writable using $k$ ones,
i.e.~
complexity at most $k$, so long as there are indeed $r+1$ or more distinct such
numbers. Thus $E_r(k)$ is defined only for $ k \ge k(r)$.
Here $E_0(k)=E(k)$.
\end{defn}

Daniel A. Rawsthorne \cite{Raws} determined a formula for $E_1(k)$, namely:
\begin{equation*}
E_1(k)=\frac{8}{9} E(k), \qquad k\ge 8
\end{equation*}
Direct computation establishes that
$E_1(k)\le(8/9)E(k)$ holds for all $2 \le k \le 7$ (note that $E_1(1)$ is not
defined). From this fact we deduce that, for $0\le a \le 5$ and all $k \ge 0$
with $a+k>0$,
$$
\cpx{2^a \cdot 3^k}=2a+3k.
$$
J. Iraids et al. \cite{data2}
has verified that $\cpx{2^a 3^k}=2a+3k$ for
$2 \le 2^a \cdot 3^k \le 10^{12}$ , so in particular
$$\cpx{2^a}=2a, \quad \mbox{ for} \quad 1\le a\le 39.$$
These results together with results given later in this paper lend
support to the following conjecture, which was originally formulated
as a question in Guy \cite{Guy}.
\begin{conj}\label{cj11}
For all $a \ge 0$ and all $k \ge 0$ with $a+k \ge 1$ there holds
$$
|| 2^a \cdot 3^k || = 2a + 3k.
$$
\end{conj}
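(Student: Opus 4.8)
This statement is a conjecture, so what follows is the strategy by which the paper's machinery reduces it, for each fixed bound on $a$, to a finite verification, together with an indication of why no such argument settles it for all $a$ at once.

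The first thing I would record is that the defect of $2^a 3^k$ depends only on $a$: if the conjectured equality holds then
\[
\dft(2^a 3^k) = (2a+3k) - 3\log_3(2^a 3^k) = a(2 - 3\log_3 2),
\]
and unconditionally $\dft(2^a 3^k) \le a(2-3\log_3 2)$ because $2a+3k$ is a legitimate count of ones. Writing $r_N := N(2-3\log_3 2) \approx 0.1073\,N$, it then suffices, in order to prove Conjecture~\ref{cj11} for all $a \le N$ and all $k$, to have a complete description of $\{\, n : \dft(n) < r_N \,\}$ and to check that no number $2^a 3^k$ with $a \le N$ appears in it with complexity below $2a + 3k$. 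In this way the whole problem is funneled into the classification of $\{\, n : \dft(n) < r \,\}$ announced in the abstract.

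For that classification, and hence for the conjecture on a fixed range, the plan is as follows. First I would establish the elementary stability properties of the defect: $\dft(n) \ge 0$ (Selfridge), $\dft(3n) \le \dft(n)$, and---crucially---that in a most-efficient representation $n = a + b$ or $n = ab$ the defect passes to the summands or factors with only a controlled loss, so that every sub-expression of a most-efficient expression for a low-defect $n$ is again of low defect. This rigidity should force the expression tree to have bounded ``width'': away from its leaves it is essentially a chain of multiplications by $3$ carrying a bounded number of small additive or multiplicative perturbations. Packaging those perturbations into polynomials, one obtains a finite list of \emph{low-defect polynomials} $f_1,\dots,f_M$ (in general polynomials in several variables, the number of variables bounded in terms of $r$) such that every $n$ with $\dft(n) < r$ equals $f_i(3^{k_1},\dots,3^{k_j})$ for some $i$ and some exponents past explicit thresholds, the finitely many smaller cases being enumerated directly. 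To conclude, one observes that $2^a 3^k = f(3^k)$ for the single-variable monomial $f(x) = 2^a x$; determining which $f_i$ can take such a value and bounding their complexities from below should pin down $\cpx{2^a 3^k}$ exactly, once $\cpx{2^a} = 2a$ is in hand, with the residual small cases dispatched by direct computation (this is also where the verified data $\cpx{2^a 3^k} = 2a + 3k$ for $2^a 3^k \le 10^{12}$ would be used).

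The hard part---and the reason this remains a conjecture---is that the classification is genuinely finite only for each fixed $r$: the number $M$ of low-defect polynomials, their degrees, and the exponent thresholds all grow with $r$, empirically quite rapidly, so the verification becomes an ever-larger computation as $N \to \infty$ and never reaches all $a$. A full proof would need either a uniform bound on this growth, or a structural fact special to powers of $2$---for example a direct proof that $\cpx{2^a} = 2a$ for every $a$, which, combined with a stability statement giving $\cpx{2^a 3^k} = \cpx{2^a} + 3k$ for all $k$, would yield the conjecture. Since neither ingredient is presently available, the realistic outcome of this program is exactly what the paper achieves: $\cpx{2^m 3^k} = 2m + 3k$ for $m$ up to the limit of feasible computation, currently $m \le 21$.
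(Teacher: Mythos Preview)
Your high-level strategy is sound---you correctly identify the statement as a conjecture and correctly reduce the case $a\le N$ to a classification of $\{n:\dft(n)<r\}$---but your threshold $r_N=N\dft(2)$ misses the one concrete trick that makes the paper's bound $m\le 21$ work. With $r_N=N\dft(2)$, the classification in Theorem~\ref{computeresult} (which only reaches $12\dft(2)\approx 1.286$) would give you at best $a\le 12$. The paper instead proves the sharper lemma: if $2^{\,n+9}3^k\notin A_{n\dft(2)}$ for every $k$, then $\cpx{2^m3^k}=2m+3k$ for all $m\le n+9$. The ``$+9$'' comes from \emph{integrality} of complexity together with the numerical accident $9\dft(2)<1$: if $\cpx{2^{\,n+9}3^k}\le 2(n+9)+3k-1$, then $\dft(2^{\,n+9}3^k)\le (n+9)\dft(2)-1<n\dft(2)$, forcing $2^{\,n+9}3^k\in A_{n\dft(2)}$. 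Taking $n=12$ and checking from the explicit list that $2^{21}3^k$ never appears in $A_{12\dft(2)}$ is exactly how the paper reaches $m\le 21$; your reduction as stated would require classifying $A_{21\dft(2)}$, which the paper does not do.

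Two smaller points. Your description of the classification machinery via ``low-defect polynomials'' in several variables is the framework of the sequel papers cited in the bibliography, not of this one; here the sets $B_{k\dft(2)}$ are built step by step from Theorem~\ref{themethod} with step size $\alpha=\dft(2)$, pruned at each stage by Lemmas~\ref{multlem} and~\ref{addlem}, and recorded as the explicit list of Theorem~\ref{computeresult}. And the $10^{12}$ numerical verification you invoke is mentioned only as supporting evidence in the introduction; it plays no role in the paper's proof of the $m\le 21$ result.
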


This conjecture is presented as a convenient form for summarizing  existing
knowledge; there is limited evidence for its truth, and it may well be false.
Indeed its truth would imply $\cpx{2^a} = 2a$, for all $a$.
Selfridge raised this special case in a contrary form,
asking the question whether there is some $a$ for which $\cpx{2^a} < 2a$
(see \cite{Guy}).

In this paper, we will investigate these questions by looking at numbers $n$ for
which the difference $\dft(n):=\cpx{n}-3\log_3 n$ is less than a given
threshold;
these sets we may call numbers
with integer complexity close to the lower bound.

\subsection{Main Results}

The fundamental issue making the complexity of an integer a complicated
quantity are: (1) It assumes the same value for many integers, because it
is logarithmically small; (2) It is hard to determine lower bounds for a given
value $\cpx{n}$, since
the dynamic programming tree is exponentially large. The feature (1) implies
there can be many tie values in
going down the tree, requiring a very large search, to determine any specific
complexity value.

We introduce a new invariant to study integer complexity.
\begin{defn}
The \emph{defect} of a natural number $n$ is given by
\begin{equation*}\label{defd}
\dft(n)=\cpx{n}-3\log_3 n
\end{equation*}
\end{defn}

The introduction of the defect simplifies things in that it provides
a more discriminating invariant: we show that $\dft(n) \ge 0$ and that
it separates integers into quite small equivalence
classes. In these equivalence classes powers of $3$ play a special role.
The following result establishes a conjecture of Arias de Reyna \cite[Conjecture 1]{Arias}.

\begin{thm}\label{power-of-3}
(1) For a given value $\delta$ of the defect, the set
$S(\delta) :=\{ m:~~\dft(m) = \delta\}$, is a
chain $\{ n\cdot 3^k: 0 \le k \le k(n)\}$ where $k(n)$ may be finite or
infinite.
The value $n$ is called the leader of the chain.

(2) The function $\dft( n \cdot 3^k)$ is non-increasing on the
sequence $\{ n \cdot 3^k : \, k\ge 0\}$.
This sequence has a finite number of leaders
culminating in a largest leader $n \cdot 3^L$, having the property that
$$
|| n \cdot 3^k|| = ||n \cdot 3^L|| + 3(k-L), ~~\mbox{for all}~~k \ge L.
$$
\end{thm}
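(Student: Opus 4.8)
The plan is to understand the defect $\dft(n)$ well enough to see why $\dft(3n) \le \dft(n)$ always, with equality unless multiplying by $3$ is "wasteful," and then extract the chain structure from that.

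\textbf{Step 1: Basic inequalities for the defect.} First I would record that $\dft(n) \ge 0$ for all $n$ (this is exactly Selfridge's bound $\cpx{n} \ge 3\log_3 n$), and that $\dft$ is additive-friendly under multiplication by $3$ in the following sense: since $\cpx{3n} \le \cpx{n} + 3$ (write $3 = 1+1+1$), we get
\begin{equation*}
\dft(3n) = \cpx{3n} - 3\log_3(3n) \le \cpx{n} + 3 - 3\log_3 n - 3 = \dft(n).
\end{equation*}
So along any sequence $\{n\cdot 3^k : k \ge 0\}$ the defect is non-increasing, which is part (2). The key refinement I need is a lower bound in the other direction: I want to show $\cpx{3^k n} \ge \cpx{n} + $ something, ideally that $\cpx{3n} \ge \cpx{n} + 3$ \emph{whenever} $\dft(3n) = \dft(n)$; more precisely I want that $\dft$ takes values in a set that, restricted to any fixed residue structure, is discrete enough to force chains to be finite.

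\textbf{Step 2: The defect is "almost" an integer mod $1$ only through powers of $3$.} The crucial structural fact is that if $\dft(m) = \dft(n)$ then $\cpx{m} - \cpx{n} = 3\log_3(m/n)$, and since the left side is an integer, $m/n$ must be a rational whose logarithm base $3$ is rational with denominator dividing... in fact $3\log_3(m/n) \in \Z$ forces $m/n$ to be an integer power of $3$ (a number $q \in \Q_{>0}$ with $\log_3 q \in \Q$ must be an integer power of $3$ — here $\log_3 q \in \frac13\Z$, but one checks $m/n = 3^{t/3}$ with $m,n$ integers forces $3 \mid t$). Hence $S(\delta)$, if nonempty, consists of numbers all of which differ by powers of $3$; writing $n_0$ for the smallest, every element is $n_0 \cdot 3^k$ and the set of valid $k$ is an interval $\{0,1,\dots,k(n_0)\}$ (an interval because $\dft$ is non-increasing in $k$, so once it drops below $\delta$ it stays below, and once it equals $\delta$ it can't have equalled something smaller earlier). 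That gives part (1), with $n = n_0$ the leader. I should also double-check that a number can be a leader (smallest in its $S(\delta)$) — i.e.\ that $\dft(n/3) > \dft(n)$ when $3 \mid n$ and $n$ is the smallest element; this is automatic from how I defined $n_0$.

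\textbf{Step 3: Finiteness of the number of leaders and stabilization.} For part (2) I must show that along $\{n\cdot 3^k\}$ the defect is \emph{eventually constant}, equivalently $\cpx{n\cdot 3^{k+1}} = \cpx{n\cdot 3^k} + 3$ for all large $k$. Here I would argue: the values $\dft(n\cdot 3^k)$ form a non-increasing sequence of non-negative reals, and all of them are congruent mod $1$ in a controlled way — indeed $\dft(n\cdot 3^k) - \dft(n) = \cpx{n\cdot 3^k} - \cpx{n} - 3k \in \Z$. So the sequence $\dft(n\cdot 3^k)$ takes values in the discrete set $\dft(n) - \Z_{\ge 0}$ intersected with $[0,\infty)$, which is finite; being non-increasing it is eventually constant. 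The last $k$ at which it strictly decreases is $L$, and $n\cdot 3^L$ is the largest leader; for $k \ge L$ we have $\dft(n\cdot 3^k) = \dft(n\cdot 3^L)$, which unwinds to $\cpx{n\cdot 3^k} = \cpx{n\cdot 3^L} + 3(k - L)$. The finitely-many leaders are exactly the $n\cdot 3^k$ with $k \le L$ at which the defect strictly exceeds the next value — and since each distinct defect value $\delta' \ge \dft(n\cdot 3^L)$ occurring in the sequence gives rise to exactly one leader among the $n \cdot 3^k$, their number is finite.

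\textbf{Main obstacle.} The genuinely delicate point is Step 3's claim that the defect strictly decreases only finitely often — equivalently that $\dft(n\cdot 3^k)$ does not decrease to its limit through infinitely many strictly smaller rational steps. This is handled by the integrality observation ($\dft(n\cdot 3^k)$ differs from $\dft(n)$ by an integer), which pins the possible values to a discrete set bounded below by $0$; so the real work is just making that integrality argument airtight and confirming that "eventually constant, non-increasing, integer-spaced, bounded below" forces finitely many drops. Everything else (Steps 1–2) is essentially bookkeeping with Selfridge's bound and the subadditivity inequalities already quoted in the excerpt.
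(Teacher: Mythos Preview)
Your proposal is correct and follows essentially the same route as the paper: the paper packages the same three observations as Propositions 2.2(3), 2.7, and Theorem 2.5, namely that $\dft(3n)\le\dft(n)$, that equal (or rationally-differing) defects force a power-of-$3$ ratio, and that the integer-spacing of $\dft(n\cdot 3^k)-\dft(n)$ together with $\dft\ge 0$ forces eventual constancy. Your identification of the ``main obstacle'' and its resolution via integrality is exactly the argument the paper gives for Theorem 2.5(1).
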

\noindent 

The set of integers $n \cdot 3^k$ for $k \ge L$ are termed {\em
stable integers},
because their representation using $1$'s stabilizes into a predictable form
for $k \ge L$. This result is proved in Section \ref{sec21}.

The main results of the paper concern classifying integers having
small values of the defect. The
defect is compatible with the multiplication aspect of the dynamic
programming definition of the integer complexity, but it does not
fully respect the addition aspect.
The main method underlying the results of this paper is given in
Theorem~\ref{themethod}, which provides strong constraints on the dynamic
programming
recursion for classifying numbers of small defect. It allows construction of
sets of integers including all integers of defect below a specified bound $r$,
which may however include some additional integers. The method contains
adjustable parameters, and with additional work they sometimes permit exact
determination of these sets.

This main method has several applications. First, we use it to explictly
classify
all integers of defect below the bound $12 \dft(2)\approx 1.286$.
(Theorem ~\ref{computeresult}). This requires pruning the sets found using
Theorem ~\ref{themethod} to determine the sets below $k \dft(2)$ 
for $1\le k \le 12.$

Using this result we obtain an explicit classification of all integers having
defect at most $1$, as follows.

\begin{thm}
The numbers $n$ satisfying $0\le \dft(n)<1$ are precisely those that can be
written in one of the following forms, and have the following complexities:
\begin{enumerate}
\item $3^k$ for $k\ge 1$, of complexity $3k$
\item $2^a 3^k$ for $a\le 9$, of complexity $2a+3k$ (for $a$, $k$ not both
zero)
\item $5\cdot2^a 3^k$ for $a\le 3$, of complexity $5+2a+3k$
\item $7\cdot2^a 3^k$ for $a\le 2$, of complexity $6+2a+3k$
\item $19\cdot3^k$ of complexity $9+3k$
\item $13\cdot3^k$ of complexity $8+3k$
\item $(3^n+1)3^k$ of complexity $1+3n+3k$ (for $n\ne0$)
\end{enumerate}
Furthermore $n=1$ is the only number having defect exactly $1$.
\end{thm}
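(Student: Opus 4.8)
The plan is to deduce this theorem as a corollary of Theorem~\ref{computeresult}, which already classifies all $n$ with $\dft(n) < 12\dft(2)$. The first step is the numerical observation that
\[
\dft(2) = \cpx{2} - 3\log_3 2 = 2 - 3\log_3 2 = 2 - \log_3 8 > 0,
\]
and more precisely $\dft(2)\approx 0.1072$, so that $12\dft(2)\approx 1.286 > 1$. Hence every integer $n$ with $\dft(n)\le 1$ already appears, with its exact complexity, in the list furnished by Theorem~\ref{computeresult}, and the whole statement reduces to pruning that list: keep the $n$ with $\dft(n) < 1$ for the main assertion, and the $n$ with $\dft(n) = 1$ for the last sentence.

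To carry out the pruning I would invoke Theorem~\ref{power-of-3}: the defect is constant along each chain $\{m\cdot 3^k : k \ge 0\}$, so it suffices to examine the finitely many leader families occurring in Theorem~\ref{computeresult}. For each such family the exact complexity supplied there makes the defect an explicit elementary function of the parameters --- constant under multiplication by $3$, and, whenever the family has the shape $c\cdot 2^a\cdot 3^k$, strictly increasing in $a$ since $\dft(2)>0$, via
\[
\dft(c\cdot 2^a\cdot 3^k) = \cpx{c\cdot 2^a} - 3\log_3 c - 3a\log_3 2 .
\]
So for each family one only has to locate, by a finite estimate with logarithms, the threshold value of the parameter past which the defect reaches $1$. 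Doing this family by family leaves exactly the seven forms in the statement, with the stated ranges of $a$, all $k\ge 0$ (or $k\ge 1$ for the pure powers of $3$), and the listed complexities.

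For the final sentence: $\dft(1) = \cpx{1} - 3\log_3 1 = 1$, so $1$ does have defect exactly $1$. Conversely, any $n$ with $\dft(n) = 1$ lies in the range covered by Theorem~\ref{computeresult}, and inspecting its list shows that every integer other than $1$ has defect strictly below $1$; in particular the family $(3^n+1)\cdot 3^k$ has defect $1 + 3n - 3\log_3(3^n+1)$, which is $<1$ for every $n\ge 1$ and increases to $1$ without attaining it, while all other families have defect bounded away from $1$ by the monotonicity computation above. Hence $n = 1$ is the unique integer of defect exactly $1$.

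All the genuine work sits in Theorem~\ref{computeresult} (and, behind it, in the method of Theorem~\ref{themethod}); here the only thing to watch is the logarithmic bookkeeping near the boundary. For instance $13\cdot 3^k$ has defect $\approx 0.996$, perilously close to $1$, so one must confirm the relevant inequality is strict and in the right direction; symmetrically one must verify that the first excluded member of each family --- $2^{10}3^k$, $5\cdot 2^4 3^k$, $7\cdot 2^3 3^k$, $13\cdot 2\cdot 3^k$, $19\cdot 2\cdot 3^k$ --- really does have defect $\ge 1$, so that the ranges of $a$ in the statement are sharp and not merely sufficient.
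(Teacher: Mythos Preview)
Your proposal is correct and takes essentially the same approach as the paper: both deduce the result directly from Theorem~\ref{computeresult} by pruning its list down to those entries with defect below~$1$, using the complexities supplied there to evaluate each family's defect. The paper's proof is extremely terse---it simply observes that the list is $A_{9\dft(2)}$ together with the part of $A_{10\dft(2)}$ obtained by discarding the leaders $\{56,80,55,38\}$---whereas you spell out the monotonicity-in-$a$ argument and the boundary checks explicitly; but the underlying method is identical.

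One small wording issue: when handling the ``furthermore'' clause you write that ``every integer other than $1$ has defect strictly below $1$'' while referring to the list of Theorem~\ref{computeresult}, which is not literally true (several entries there, e.g.\ $2^{10}3^k$, have defect strictly between $1$ and $12\dft(2)$). What you mean, and what your subsequent sentences actually argue, is that no entry other than $1$ has defect \emph{equal} to~$1$. This is fine, and indeed follows more cleanly from Proposition~\ref{eqdefect}: if $\dft(n)=\dft(1)=1$ with $n>1$ then $n=3^k$ for some $k\ge 1$, forcing $\dft(n)=0$, a contradiction.
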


This result is established in Section \ref{sec61}.
Using a slightly more general result, which we present as Theorem \ref{computeresult},
one can obtain a generalization of Rawsthorne's results,
consisting of a description of all $E_r(k)$ for every finite $r \ge 0$, 
valid for all sufficiently large $k$, depending on $r$.
This answer also depends on the congruence class of $k \pmod{3}$. For
example, one has $E_2(3k) = \frac{64}{81} E(3k)$, 
$E_2(3k+1) =\frac{5}{6} E(3k+1)$ and $E_2(3k+2) = \frac{5}{6} E(3k+2)$, all
holding for $k \ge 4$.
For $E_5(k)$ all three residue classes have different formulas, valid for $k
\ge 5$.  This generalization will be described elsewhere (\cite{seq3}).

Secondly, the result can be used to obtain lower bounds on complexity
of certain integers, by showing they are excluded from sets containing all
integers of complexity at most $r$.
This  we use to prove Conjecture \ref{cj11} for $a \le 21$.

\begin{thm}\label{th11main}
For all $0\le a \le 21$ and any $k\ge 0$ having $a+k \ge 1$, there holds
$$
\cpx{2^a3^k}=2a+3k.
$$
\end{thm}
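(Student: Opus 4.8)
The plan is to use Theorem~\ref{computeresult}, the explicit classification of all integers of defect below $12\dft(2)$, as the engine. Observe that $\cpx{2^a 3^k} \le 2a+3k$ always holds (write $2^a 3^k$ using $a$ copies of $2=1+1$ and $k$ copies of $3=1+1+1$), so the content of the theorem is the matching lower bound $\cpx{2^a 3^k} \ge 2a+3k$. Equivalently, by Theorem~\ref{power-of-3}(2), it suffices to prove this for the leader of the chain containing $2^a 3^k$; and since $\dft(2^a 3^k) = \cpx{2^a 3^k} - 3\log_3(2^a 3^k) \le 2a - 3a\log_3 2 = a\dft(2)$, any such integer with $a \le 21$ would — if it were a counterexample — have defect strictly less than $12\dft(2)$ only when $a \le 11$, so the very small defect regime is not by itself enough. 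The key point is rather that a counterexample $\cpx{2^a 3^k} < 2a+3k$ would have $\dft(2^a 3^k) < a\dft(2) \le 21\dft(2)$, so I will actually need the classification to extend far enough, or argue more cleverly.

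The cleaner approach I would take: suppose for contradiction that $\cpx{2^a 3^k} \le 2a+3k-1$ for some $a \le 21$. Then $\dft(2^a 3^k) \le a\dft(2) - (1 - \text{something})$; more precisely $\dft(2^a 3^k) = \cpx{2^a 3^k} - 3a\log_3 2 - 3k\log_3 3 \le (2a+3k-1) - 3k - 3a\log_3 2 = 2a - 1 - 3a\log_3 2 = a\dft(2) - 1$. Since $\dft(2) = 2 - 3\log_3 2 \approx 0.107$, we get $\dft(2^a 3^k) \le 21\dft(2) - 1 \approx 1.25 < 12\dft(2) \approx 1.286$. So any counterexample with $a \le 21$ has defect below $12\dft(2)$, and hence its leader appears in the explicit list produced by Theorem~\ref{computeresult}. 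Then I would go through that finite list of leaders and check, for each one, that it is \emph{not} of the form $2^{a'}3^{k'}$ with complexity less than $2a'+3k'$ — indeed by Theorem~\ref{th1} every leader on the list that happens to be a power of $2$ times a power of $3$ has complexity exactly $2a'+3k'$, giving the contradiction. Concretely, one checks that for each $a \le 21$ the integer $2^a$ (or its relevant stable representative $2^a 3^k$) is among the classified integers with the expected complexity.

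The main obstacle, and the step requiring the most care, is verifying that the classification from Theorem~\ref{computeresult} is genuinely exhaustive up to defect $12\dft(2)$ — i.e. that the pruning of the (possibly over-inclusive) sets output by Theorem~\ref{themethod} has been carried out correctly down through all thresholds $k\dft(2)$ for $1 \le k \le 12$ — and then extracting from that list exactly which $2^a 3^k$ occur. The arithmetic estimate $21\dft(2) - 1 < 12\dft(2)$ is the crucial inequality that makes $a \le 21$ (rather than some smaller bound) the natural cutoff: one has $21\dft(2) - 12\dft(2) = 9\dft(2) = 18 - 27\log_3 2 \approx 0.965 < 1$, so the argument works for $a \le 21$ but the same computation shows it would fail at $a = 22$ since $22\dft(2) - 1 \approx 1.357 > 1.286$. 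Thus the theorem is exactly as strong as the defect classification allows, and extending it further would require pushing the explicit classification past $12\dft(2)$ — which the paper notes is possible "with more computation."
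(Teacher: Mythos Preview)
Your approach is correct and essentially identical to the paper's: both reduce to checking, via Theorem~\ref{computeresult}, that no $2^a 3^k$ with complexity below $2a+3k$ lies in $A_{12\dft(2)}$, using the key arithmetic inequality $9\dft(2)<1$ (equivalently $27\log_3 2 > 17$), which is exactly what makes $a=21$ the natural cutoff. One minor slip: the complexities of the numbers $2^a 3^k$ appearing on the list are certified by Theorem~\ref{computeresult} itself, not by Theorem~\ref{th1} (which only covers $a\le 2$).
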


This result is established in Section \ref{sec62}.
It is possible to carry out computations establishing
the Conjecture \ref{cj11} for larger value of $a$,
as we shall describe in \cite{seq2}.

Thirdly, our main method can be used to estimate the magnitude of numbers below
$x$ having a given defect.

\begin{thm}
\label{indcount0}
For any $r >0$ the number of elements $A_r(x)$ smaller than $x$
which have complexity $\dft(n) <r$ satisfies
an upper bound, valid for all $x \ge 2$,
$$
A_r(x) \le C_r (\log x)^{\lfloor r \rfloor+1},
$$
where $C_r >0$ is an effectively computable constant depending on $r$.
\end{thm}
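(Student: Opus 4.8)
The plan is to reduce the count to a lattice-point estimate: use Theorem~\ref{power-of-3} to factor out powers of $3$, and the main method to control the leaders that remain. By Theorem~\ref{power-of-3}, every $n$ with $\dft(n)<r$ lies in the chain $S(\dft(n))$, so $n=3^k\ell$ for a unique $k\ge 0$ and the leader $\ell$ of that chain, and then $\dft(\ell)=\dft(n)<r$ with $\ell\le n$. Letting $L_r(x)$ denote the number of leaders $\ell\le x$ having $\dft(\ell)<r$, we get
\[
A_r(x)\ \le\ \sum_{\ell}\ \#\{\,k\ge 0:\ \ell\,3^k\le x\,\}\ \le\ (\log_3 x+1)\,L_r(x),
\]
where $\ell$ runs over leaders $\le x$ of defect $<r$ and each inner count is at most $\lfloor\log_3 x\rfloor+1$. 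It thus suffices to bound $L_r(x)$, and for this I would apply Theorem~\ref{themethod}: the method produces, for the given $r$, a finite list of low-defect polynomials $f_1,\dots,f_{N_r}$ — multivariate polynomials with non-negative integer coefficients, $f_i$ in $s_i\le\lfloor r\rfloor$ variables — such that every leader of defect $<r$ has the form $f_i(3^{a_1},\dots,3^{a_{s_i}})$ for some $i$ and some non-negative integers $a_1,\dots,a_{s_i}$. Since the method is constructive, $N_r$ and the $f_i$ are effectively computable from $r$.

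Next I would count lattice points. Fix $x\ge 2$ and one polynomial $f_i$, assuming each of its variables actually occurs (after discarding any that do not). Since $f_i$ has non-negative coefficients and $3^a\ge 1$ for $a\ge 0$, we have $f_i(3^{a_1},\dots,3^{a_{s_i}})\ge 3^{a_j}$ for every $j$; hence $f_i(3^{a_1},\dots,3^{a_{s_i}})\le x$ forces each $a_j\le\log_3 x$. So the number of values $\le x$ produced by $f_i$ is at most the number of exponent vectors $(a_1,\dots,a_{s_i})$ with each $a_j\in\{0,1,\dots,\lfloor\log_3 x\rfloor\}$, that is, at most $(\lfloor\log_3 x\rfloor+1)^{s_i}\le(\log_3 x+1)^{\lfloor r\rfloor}$. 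Summing over $i$ gives $L_r(x)\le N_r(\log_3 x+1)^{\lfloor r\rfloor}$, so $A_r(x)\le N_r(\log_3 x+1)^{\lfloor r\rfloor+1}$. Finally, $\log_3 x+1\le(1+\log_2 3)\log_3 x$ for $x\ge 2$, so raising to the power $\lfloor r\rfloor+1$ and passing from $\log_3$ to $\log$ absorbs every constant into a single $C_r$ — effectively computable since $N_r$ and $\lfloor r\rfloor$ are — giving $A_r(x)\le C_r(\log x)^{\lfloor r\rfloor+1}$ for all $x\ge 2$.

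Essentially everything of substance sits in the first step: extracting from Theorem~\ref{themethod} both the \emph{finiteness} of the list of low-defect polynomials attached to a given $r$ (equivalently, termination of the procedure generating them) and the sharp bound $s_i\le\lfloor r\rfloor$ on the number of variables. The latter is what pins the exponent down to exactly $\lfloor r\rfloor+1$ rather than some larger $O_r(1)$; when $r$ is not an integer it uses the observation that the finitely many leaders of defect $<r$ that would arise from $(\lfloor r\rfloor+1)$-variable polynomials can simply be adjoined to the list as constants. Once that structural input is in hand, the counting is routine bookkeeping.
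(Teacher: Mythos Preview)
Your reduction $A_r(x)\le(\log_3 x+1)\,L_r(x)$ is fine and matches the paper. The gap is in the next step. You assert that Theorem~\ref{themethod} ``produces, for the given $r$, a finite list of low-defect polynomials'' in at most $\lfloor r\rfloor$ variables covering every leader of defect below $r$, and you explicitly acknowledge that ``essentially everything of substance sits'' there --- but you do not carry it out. That structural statement is not a consequence of Theorem~\ref{themethod} as stated in this paper; the notion of low-defect polynomial does not appear here at all (it is developed only in the sequels~\cite{seq1,seq2}). Extracting such a finite list, together with the sharp variable bound $s_i\le\lfloor r\rfloor$, would itself require an inductive argument on the level $k$ in Theorem~\ref{themethod}, together with the finiteness of $B_\alpha$ for $\alpha<1$ (Theorem~\ref{finite}), and a careful choice of step size $\alpha$. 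Your remark about absorbing the $(\lfloor r\rfloor+1)$-variable contributions as constants when $r\notin\Z$ likewise needs a finiteness argument you do not supply.

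The paper avoids this entirely. Rather than first building polynomials and then counting lattice points, it proves directly by induction on $k$ (Proposition~\ref{indcount}) that $B_{k\alpha}(x)=O((\log x)^{k-1})$ and $A_{k\alpha}(x)=O((\log x)^k)$, feeding the five cases of Theorem~\ref{themethod} straight into the counting inequality. The exponent $\lfloor r\rfloor+1$ then falls out of the single observation that $r=(\lfloor r\rfloor+1)\cdot\frac{r}{\lfloor r\rfloor+1}$ with $\frac{r}{\lfloor r\rfloor+1}<1$, so one applies the proposition with $k=\lfloor r\rfloor+1$. This is the same induction you would need to justify your polynomial list, but it delivers the count without the intermediate structure theorem.
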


This result is proved in Section \ref{sec63}. It implies that the set of
possible defect values is unbounded.

\subsection{Discussion}

We first remark on computing $\cpx{n}$. The recursive definition permits
computing $\cpx{n}$ by dynamic programming, but it requires knowing
$\{ \cpx{k} : 1 \le k \le n-1\}$, so takes exponential time in the input size
of $n$ measured in bits. In particular, a straightforward
approach to computing $\cpx{x}$
requires on the order of $n^2$ steps. Srinivas and Shankar \cite{waset}
obtained an improvement on this, running in time $O(n^{\log_2 3})$.

We make some further remarks on Conjecture \ref{cj11}.
Let's specialize to $k=0$ and
consider an analogous question for prime powers,
concerning $\cpx{p^m}$ as $m$ varies.
It is clear that $\cpx{p^m} \le m \cdot \cpx{p}$, since we can
concatenate by multiplication $m$ copies of a good representation of $p$.
For which primes $p$ is it true that
$\cpx{p^m} = m \cpx{p}$ holds for all $m \ge 1$?
This is verified for $p=3$ by $\cpx{3^m} = 3m,$
and the truth of Conjecture \ref{cj11} requires that it hold
for $p=2$, with $\cpx{2^m} = 2m$.
However this  question has a negative answer for powers of $5$.
Here while $\cpx{5}=5$, one instead gets that $\cpx{5^6}=\cpx{15625}=29<6
\cdot\cpx{5}= 30$, as
\begin{eqnarray*}
15625 & = & 1+(1+1)(1+1)(1+1)(1+1+1)(1+1+1)\cdot \\
& & (1+(1+1)(1+1)(1+1)(1+1+1)(1+1+1)(1+1+1))
\end{eqnarray*}
This encodes the identity $5^5= 1 +72 \cdot 217$,
in which $72= 2^3 \cdot 3^2$ and $217= 1+ 2^3 \cdot 3^3$.
This counterexample for powers of $5$ leaves open the possibility
that there might exist a (possibly far larger) counterexample for powers of $2$,
that has not yet been detected. 

This discussion shows that Conjecture \ref{cj11}, if true,  
implies a kind of very strong arithmetic independence of powers of $2$ and
powers of $3$.  This would represent an important feature of the prime $2$
in integer complexity.  Conjecture \ref{cj11} has implications about the
number of nonzero digits in the expansion of $2^n$ in base $3$ as a function of
$n$; namely, if there existed a
large power of $2$ with a huge number of zero digits in its base $3$ expansion,
then this would give a (counter)-example achieving $\cpx{2^k}< 2k$.
Problems similar to this very special subproblem already appear difficult (see
Lagarias \cite{Lag09}).  A result of C.~L.~Stewart \cite{Stewart} yields a lower
bound on the number of nonzero digits appearing in the base $3$ expansion of
$2^n$, but it is tiny, being only $\Omega(\frac{\log n}{\log \log n})$.

The truth of $\cpx{2^n}= 2n$ would also immediately imply the lower bound 
$$
\limsup_{n\rightarrow\infty} \frac{\cpx{n}}{\log n}\ge \frac{2}{\log 2}.
$$
Computer experiments seem to agree with this prediction and even allow the
possibility of equality, see Iraids et al \cite{data2}.

There remain many interesting open questions concerning the classification of
integers given by the defect.  The first concerns the distribution of stable
and unstable integers. How many are there of each kind?  A second question
concerns the function  $M(n)$ that counts the number of distinct minimal
decompositions into $1$'s that a given integer $n$ has. How does this function
behave?  

Finally we remark that the set $\mathscr{D} := \{ \dft(n): n \ge 1 \}$ of all
defect values turns out to be a highly
structured set. In a sequel \cite{seq1}, we shall show that it is
a well-ordered set, of order type $\omega^\omega$, a fact related to some
earlier conjectures of Juan Arias de Reyna \cite{Arias}.

\section{Properties of the defect}
\label{secdft}

The defect is the fundamental tool in this paper; let us begin by
noting some of its basic properties.

\begin{prop}
\label{multdft}

(1) For all integers $a \ge 1$,
\[ \dft(a) \ge 0.\]
Here equality holds precisely for $a= 3^k$, $k \ge 1$.

(2) One has
\[ \dft(ab)\le \dft(a)+\dft(b),\]
and equality holds if and only if
$\cpx{ab}=\cpx{a}+\cpx{b}$.

(3) For $k\ge 1$,
\[\dft(3^k \cdot n) \le \dft(n)\]
and equality holds
if and only if $\cpx{3^k \cdot n}=3k+\cpx{n}$.

\end{prop}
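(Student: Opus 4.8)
The plan is to prove the three parts in order, using only the elementary submultiplicativity $\cpx{ab}\le\cpx{a}+\cpx{b}$, Selfridge's bound (Theorem~\ref{th1}, or equivalently $\cpx{n}\ge 3\log_3 n$), and the definition $\dft(n)=\cpx{n}-3\log_3 n$.

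For part (1), the inequality $\dft(a)\ge 0$ is just a restatement of Selfridge's lower bound $\cpx{a}\ge 3\log_3 a$. For the equality case, suppose $\dft(a)=0$, i.e.\ $\cpx{a}=3\log_3 a$; since $\cpx{a}$ is an integer, $3\log_3 a\in\Z$, forcing $a=3^k$ for some integer $k\ge 0$ (we need $a\ge 1$, and $k=0$ gives $a=1$ with $\dft(1)=1\ne 0$, so $k\ge 1$). Conversely, Theorem~\ref{th1} with $a=0$ gives $\cpx{3^k}=3k=3\log_3(3^k)$, so $\dft(3^k)=0$ for $k\ge 1$. The only mild subtlety is disposing of the $k=0$ case, which is immediate.

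For part (2), unwind the definition: $\dft(ab)=\cpx{ab}-3\log_3(ab)=\cpx{ab}-3\log_3 a-3\log_3 b$, while $\dft(a)+\dft(b)=\cpx{a}+\cpx{b}-3\log_3 a-3\log_3 b$. Subtracting, $\dft(a)+\dft(b)-\dft(ab)=\cpx{a}+\cpx{b}-\cpx{ab}\ge 0$ by submultiplicativity, with equality exactly when $\cpx{ab}=\cpx{a}+\cpx{b}$. Part (3) is then the special case $a\leftarrow 3^k$, $b\leftarrow n$ of part (2): $\dft(3^k n)\le\dft(3^k)+\dft(n)=\dft(n)$ since $\dft(3^k)=0$ by part (1) (using $k\ge 1$), and the equality condition $\cpx{3^k n}=\cpx{3^k}+\cpx{n}=3k+\cpx{n}$ comes directly from the equality condition in part (2) together with $\cpx{3^k}=3k$.

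There is essentially no main obstacle here: the proposition is a bookkeeping consequence of the definitions plus two already-cited facts (Selfridge's lower bound and $\cpx{3^k}=3k$). The only point requiring a sentence of care is the rigidity argument in part (1)—that an integer value of $3\log_3 a$ forces $a$ to be a power of $3$—and the harmless exclusion of $k=0$.
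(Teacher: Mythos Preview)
Your proof is correct and follows essentially the same approach as the paper's own proof, just spelled out in more detail: Selfridge's bound for part (1) with the integrality argument for the equality case, the direct unwinding of the definition for part (2), and specialization to $3^k$ for part (3). There is no substantive difference.
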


\begin{proof}
(1) This follows from the result of Selfridge.  Since for $k\ge 1$,
$\cpx{3^k}=3k$, we have $\dft(3^k)=0$ for $k\ge 1$, while $\dft(1)=1$.  For the
converse, note that $3\log_3 n$ is only an integer if $n$ is a power of $3$.

(2) This is a direct consequence of the definition.

(3) This follows from (2), from noting that $\dft(3^k)=0$ for $k\ge 1$.
\end{proof}

Because $\cpx{3^k}=3k$ for $k\ge 1$, one might hope that in general,
$\cpx{3n}=3+\cpx{n}$ for $n>1$. However, this is not so; for instance,
$\cpx{107}=16$, but $\cpx{321}=18$.

The defect measures how far a given integer is from the upper bound $E(||n||)$,
given in terms of  the ratio $E(\cpx{n})/n$:

\begin{prop}
We have $\delta(1) =1$ and 
\label{dRformulae}
\begin{displaymath}
\delta(n)=\left\{ \begin{array}{ll}
3\log_3 \frac{E(\cpx{n})}{n}	& \mathrm{if}\quad \cpx{n}\equiv 0\pmod{3}, \\
3\log_3 \frac{E(\cpx{n})}{n} +2\,\delta(2)
	& \mathrm{if}\quad \cpx{n}\equiv 1\pmod{3},  \,\,  \mathrm{with} \; n >  1, \\
3\log_3 \frac{E(\cpx{n})}{n} +\delta(2)
	& \mathrm{if}\quad \cpx{n}\equiv 2\pmod{3}. 
\end{array} \right.
\end{displaymath}
In particular $E(\cpx{n})/n\ge 1$ for any $n \ge 1$.
\end{prop}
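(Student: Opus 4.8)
The plan is to reduce everything to the definition $\dft(n)=\cpx{n}-3\log_3 n$ together with Selfridge's explicit formulas for $E(k)$, and to carry out a short case analysis on the residue of $\cpx{n}$ modulo $3$. First I would dispose of $n=1$ separately: here $\cpx{1}=1$ and $3\log_3 1=0$, so $\dft(1)=1$, as asserted. For $n>1$ write $k=\cpx{n}$ and $k=3j+i$ with $i\in\{0,1,2\}$, observing that when $i=1$ the hypothesis $n>1$ forces $k\ge 2$ and hence $j\ge 1$, so that the formula $E(k)=4\cdot 3^{j-1}$ is legitimately applicable.

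The key computation is to rewrite $3\log_3 E(k)$ in terms of $k$ and $\dft(2)$ in each residue class. Using $E(3j)=3^j$, $E(3j+1)=4\cdot 3^{j-1}$, $E(3j+2)=2\cdot 3^j$ together with the elementary identities $3\log_3 4=6\log_3 2$ and $\dft(2)=2-3\log_3 2$, one obtains respectively $3\log_3 E(k)=k$, $3\log_3 E(k)=k-2\dft(2)$, and $3\log_3 E(k)=k-\dft(2)$. Then in each case I would substitute $\cpx{n}=k$ into $\dft(n)=k-3\log_3 n$, replace $k$ by the expression just found, and group the logarithmic terms as $3\log_3 E(k)-3\log_3 n=3\log_3\!\big(E(k)/n\big)$; what is left over is exactly the stated correction term ($0$, $2\dft(2)$, or $\dft(2)$, according to $k\bmod 3$), which gives the three displayed formulas.

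The final assertion $E(\cpx{n})/n\ge 1$ is immediate from the definitions: $E(\cpx{n})$ is by construction the largest integer writable with $\cpx{n}$ ones, and $n$ is itself writable with $\cpx{n}$ ones, so $n\le E(\cpx{n})$. There is no real obstacle here—the statement is a bookkeeping identity flowing directly from the $E(k)$ formulas—and the only point deserving a moment's care is verifying that the formula used in the case $\cpx{n}\equiv 1\pmod 3$, which involves $3^{\,j-1}$, is valid under the standing hypothesis $n>1$, which is precisely why that hypothesis is imposed in the statement.
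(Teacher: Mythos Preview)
Your proof is correct and follows exactly the approach the paper indicates: the paper's own proof simply says ``The proof is a straightforward computation using Selfridge's formulas for $E(k)$, for $k = 3j+i$, $i=0,1,2$,'' and you have carried out precisely that computation, including the care about $n>1$ in the $\cpx{n}\equiv 1\pmod 3$ case and the final observation that $n\le E(\cpx{n})$ by definition of $E$.
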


\begin{proof}
The proof is a straightforward computation using Selfridge's formulas for $E(k)$,
for $k = 3j+ i,$ $i=0,1,2$.
\end{proof}

\subsection{Stable Integers}\label{sec21}

This example above motivates the following definition.

\begin{defn}
A number $m$ is called \emph{stable} if $\cpx{3^k \cdot m}=3k+\cpx{m}$
holds for every $k \ge 1$.
Otherwise it is called \emph{unstable}.
\end{defn}

We have the following criterion for stability.

\begin{prop}
\label{stabdft}
The number $m$ is stable if and only if $\dft(3^k \cdot m)=\dft(m)$ for all $k\ge 0$.
\end{prop}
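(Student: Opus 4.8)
The plan is to unwind both sides of the claimed equivalence and reduce everything to part (3) of Proposition~\ref{multdft}. Recall from the definition of the defect that for any $k \ge 0$,
\[
\dft(3^k \cdot m) = \cpx{3^k \cdot m} - 3\log_3(3^k m) = \cpx{3^k \cdot m} - 3k - 3\log_3 m,
\]
whereas $\dft(m) = \cpx{m} - 3\log_3 m$. Subtracting, the equation $\dft(3^k \cdot m) = \dft(m)$ holds if and only if $\cpx{3^k \cdot m} = 3k + \cpx{m}$. (Equivalently, this is exactly the ``equality'' clause of Proposition~\ref{multdft}(3).)

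With this in hand the proposition follows by bookkeeping. First I would dispose of the $k = 0$ case, where both $\dft(3^0 m) = \dft(m)$ and $\cpx{3^0 m} = \cpx{m}$ hold trivially, so this instance imposes no constraint. Then, for the forward direction, suppose $m$ is stable; by definition $\cpx{3^k \cdot m} = 3k + \cpx{m}$ for every $k \ge 1$, and by the computation above this gives $\dft(3^k \cdot m) = \dft(m)$ for every $k \ge 1$, hence for every $k \ge 0$. Conversely, if $\dft(3^k \cdot m) = \dft(m)$ for all $k \ge 0$, then in particular for all $k \ge 1$, and the computation above converts each such equality into $\cpx{3^k \cdot m} = 3k + \cpx{m}$, which is precisely the definition of $m$ being stable.

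Since the argument is just a translation between the two normalizations via Proposition~\ref{multdft}(3), there is no real obstacle; the only point requiring minor care is matching the index ranges ($k \ge 1$ in the definition of stability versus $k \ge 0$ in the statement), which is harmless because the $k = 0$ instance is vacuous on both sides.
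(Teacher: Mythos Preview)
Your proof is correct and takes essentially the same approach as the paper, which simply says the result is immediate from Proposition~\ref{multdft}(3); you have just spelled out explicitly the translation between $\dft(3^k m)=\dft(m)$ and $\cpx{3^k m}=3k+\cpx{m}$ that the paper leaves implicit.
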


\begin{proof}
This is immediate from Proposition~\ref{multdft}(3).
\end{proof}

These results already suffice to prove the following
result, conjectured by Juan Arias de Reyna \cite{Arias}.

\begin{thm}
\label{cj1}
(1) For any $m \ge 1$, there exists a finite $K\ge 0$ such that
$3^K m$ is stable.

(2) If the defect $\dft(m)$ satisfies $0 \le \dft(m)<1$, then $m$ itself is
stable.
\end{thm}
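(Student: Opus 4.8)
The plan is to prove both parts using the defect's monotonicity under multiplication by powers of $3$ (Proposition~\ref{multdft}(3)) together with a pigeonhole/well-foundedness argument on the values $\dft(3^k m)$. First I would observe that by Proposition~\ref{multdft}(3) the sequence $\dft(3^k m)$ is non-increasing in $k$ and bounded below by $0$ (Proposition~\ref{multdft}(1)). So to prove part (1) it suffices to show this non-increasing sequence is eventually constant: once $\dft(3^{K}m) = \dft(3^{K+1}m)$, we in fact get $\dft(3^{k}m) = \dft(3^{K}m)$ for all $k \ge K$ (again by monotonicity), and then $3^K m$ is stable by Proposition~\ref{stabdft}. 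The key point making the sequence eventually constant is that the defect takes values in a discrete-enough set: whenever $\cpx{3^k m} < 3k + \cpx{m}$, the drop in complexity is by a positive integer, so $\dft(3^{k}m)$ is strictly less than $\dft(m)$ by at least... here one must be slightly careful, since the decrease is not a decrease by an integer amount. The clean way is this: $\dft(3^k m)$ differs from $\dft(m)$ by $\cpx{3^k m} - 3k - \cpx{m}$, a nonpositive integer; since $\cpx{3^k m} \ge 1$ the quantity $\cpx{3^k m} - 3k$ cannot decrease without bound, hence it is eventually constant in $k$, giving the stabilization.

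For part (2), suppose $0 \le \dft(m) < 1$. By monotonicity, $\dft(3^k m) \le \dft(m) < 1$ for all $k \ge 0$, and by Proposition~\ref{multdft}(3), $\dft(m) - \dft(3^k m) = \cpx{3^k m} - 3k - \cpx{m}$ is a nonnegative integer. But $\dft(m) - \dft(3^k m) < 1 - 0 = 1$ since $\dft(3^k m) \ge 0$ and $\dft(m) < 1$. A nonnegative integer that is strictly less than $1$ is zero, so $\dft(3^k m) = \dft(m)$ for all $k \ge 0$, and $m$ is stable by Proposition~\ref{stabdft}.

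The main obstacle, such as it is, lies entirely in part (1): making rigorous the claim that the non-increasing sequence $\dft(3^k m)$ must stabilize rather than decreasing forever by smaller and smaller amounts. This is resolved by passing from the real-valued defect to the integer-valued quantity $\cpx{3^k m} - 3k$: this is a sequence of integers, it is non-increasing in $k$ (since $\cpx{3^{k+1} m} \le 3 + \cpx{3^k m}$), and it is bounded below by $\cpx{3^k m} - 3k \ge 3\log_3(3^k m) - 3k = 3\log_3 m \ge 0$ — actually bounded below by a fixed constant independent of $k$. A non-increasing sequence of integers bounded below is eventually constant; picking $K$ past that point gives the desired stable integer $3^K m$. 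Part (2) then needs no new idea beyond the integrality observation already used.
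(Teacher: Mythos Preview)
Your approach is the paper's: the sequence $\dft(3^k m)$ is nonnegative, non-increasing, and drops only by integer amounts (equivalently, $\cpx{3^k m}-3k$ is a non-increasing integer sequence bounded below by $3\log_3 m$), hence stabilizes; part~(2) then follows because an integer drop of size at least $1$ cannot fit inside $[0,1)$.

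Two small corrections, neither fatal. First, the assertion ``once $\dft(3^{K}m)=\dft(3^{K+1}m)$, we in fact get $\dft(3^{k}m)=\dft(3^{K}m)$ for all $k\ge K$ (again by monotonicity)'' does \emph{not} follow from monotonicity: a non-increasing sequence with integer drops could pause and then drop again. Fortunately you never use this claim---your actual argument via the bounded integer sequence $\cpx{3^k m}-3k$ proves eventual constancy directly, which is what is needed. (Also, the first lower bound you offer, ``since $\cpx{3^k m}\ge 1$,'' gives only $\cpx{3^k m}-3k\ge 1-3k$, which is not bounded below; the correct bound $3\log_3 m$ appears in your last paragraph.) Second, in part~(2) there is a sign slip: $\dft(m)-\dft(3^k m)=\cpx{m}+3k-\cpx{3^k m}$, the negative of what you wrote; your conclusion that this is a nonnegative integer less than $1$, hence zero, is unaffected.
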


\begin{proof}[Proof of Theorem~\ref{cj1}]
(1) From Proposition~\ref{multdft}, we have that for any $n$,
$\dft(3n)\le
\dft(n)$, with equality if and only if $\cpx{3n}=\cpx{n}+3$.  More generally,
$\dft(3n)=\dft(n)-(\cpx{n}+3-\cpx{3n})$, and so the difference
$\dft(n)-\dft(3n)$ is always an integer.
This means that the sequence
$\dft(m), \dft(3m), \dft(9m), \ldots$ is non-increasing, nonnegative, and
can only decrease in integral amounts;
hence it must eventually stabilize. Applying Proposition~\ref{stabdft} proves
the theorem.

(2) If $\dft(m)<1$, since all $\dft(n) \ge 0$ there is no room to remove
any integral
amount, so $m$ must be stable.
\end{proof}

Note that while this proof shows that for any $n$ there exists $K$ such that
$3^K n$ is stable, it yields no upper bound on such a $K$.  We will give a more
constructive proof and show how to compute such a $K$ in \cite{seq2}.

The value of the defect separates the integers into small classes, whose members
differ only by powers of $3$.

\begin{prop}
\label{eqdefect}
Suppose that $m$ and $n$ are two positive integers, with $m>n$.

(1) If $q:= \dft(n)- \dft(m)$ is rational, then it is necessarily a nonnegative integer,
and furthermore $m=n \cdot 3^k$ for some $k \ge 1$.

(2) If $\dft(n) = \dft(m)$ then $m= n \cdot 3^k$ for some $k \ge 1$ and furthermore
\[ || n \cdot 3^j|| = 3j+ || n ||\qquad\mathrm{for}\ 0 \le j \le k.\]
In particular $\dft(n)= \dft(m)$ implies $\cpx{n}\equiv \cpx{m} \pmod{3}$.

\end{prop}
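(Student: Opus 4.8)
The plan is to exploit the fact, already established in the proof of Theorem~\ref{cj1}, that $\dft(n) - \dft(3n)$ is always an integer, together with Proposition~\ref{dRformulae}, which expresses $\dft(n)$ in terms of $3\log_3(E(\cpx{n})/n)$ and the residue of $\cpx{n}$ modulo $3$. The key observation is that $3\log_3$ of a rational number is rational only when that rational is (a ratio of) powers of $3$, since $\log_3 q \in \Q$ with $q \in \Q_{>0}$ forces $q = 3^t$ for some $t \in \Z$ (by unique factorization applied to numerator and denominator). This is the irrationality input that drives everything.

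For part (1): write $q = \dft(n) - \dft(m)$ and plug in the formulas from Proposition~\ref{dRformulae}. Each of $\dft(n)$ and $\dft(m)$ has the shape $3\log_3(E(\cpx{\cdot})/\cdot) + c$, where $c \in \{0, \dft(2), 2\dft(2)\}$ depending on the residue class, and where $E(\cpx{\cdot})$ is itself a rational multiple of a power of $3$ by Selfridge's formulas. Subtracting, $q = 3\log_3\!\big(\tfrac{E(\cpx{n})\, m}{E(\cpx{m})\, n}\big) + (c_n - c_m)$. Now I would argue that $c_n - c_m$ must be a rational multiple of $\dft(2)$, and since $\dft(2) = 2 - 3\log_3 2$ is irrational (again because $\log_3 2 \notin \Q$), rationality of $q$ forces $c_n = c_m$, i.e. $\cpx{n} \equiv \cpx{m} \pmod 3$. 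With the $\dft(2)$ terms gone, $q = 3\log_3(\text{rational})$, so the rational in question is a power of $3$: $\tfrac{E(\cpx{n})\,m}{E(\cpx{m})\,n} = 3^t$. Since $E$ of things differing by a multiple of $3$ in exponent differ by the corresponding power of $3$, this collapses to $m/n$ being a power of $3$; as $m > n$, we get $m = n \cdot 3^k$ with $k \ge 1$. Finally, $q \ge 0$ because $\dft$ is non-increasing along multiplication by $3$ (Proposition~\ref{multdft}(3)), and $q \in \Z$ because $\dft(n) - \dft(3^k n)$ is a sum of $k$ integer steps, each an integer by the argument recalled above.

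For part (2): the hypothesis $\dft(n) = \dft(m)$ is the case $q = 0$ of part (1), so immediately $m = n \cdot 3^k$ for some $k \ge 1$. For the intermediate equalities, apply Proposition~\ref{multdft}(3) repeatedly: $\dft(n) \ge \dft(3n) \ge \dft(9n) \ge \cdots \ge \dft(3^k n) = \dft(n)$, so every inequality in this chain is an equality, and the equality condition in Proposition~\ref{multdft}(3) gives $\cpx{3^{j+1} n} = 3 + \cpx{3^j n}$ for $0 \le j < k$, hence $\cpx{3^j n} = 3j + \cpx{n}$ for $0 \le j \le k$ by induction. The congruence $\cpx{n} \equiv \cpx{m} \pmod 3$ is then immediate from $\cpx{m} = \cpx{3^k n} = 3k + \cpx{n}$.

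The main obstacle I anticipate is bookkeeping the irrationality step cleanly: one must be careful that the "constant" terms really only involve $\dft(2)$ (not, say, some stray power of $3$ that could conspire with the logarithm), and that the $E(\cpx{n})/n$ factors contribute only a rational times a power of $3$ so that taking $3\log_3$ lands in $\Q + \Q\cdot\log_3 2 + \Q \cdot \log_3(\text{other primes})$, whereupon linear independence of $1, \log_3 2$, and $\log_3 p$ over $\Q$ for the relevant primes $p$ forces all the non-rational pieces to cancel. Making this rigorous just requires invoking unique factorization, but it is the one place where care is needed rather than routine computation.
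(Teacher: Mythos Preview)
Your argument for part (2) is correct and matches the paper's. For part (1), however, there is a genuine error, and the route is also needlessly circuitous.

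The claim that rationality of $q$ forces $c_n = c_m$ (equivalently $\cpx{n}\equiv\cpx{m}\pmod 3$) is \emph{false}. Take $n=107$, $m=321=3\cdot 107$: the paper records $\cpx{107}=16\equiv 1$ and $\cpx{321}=18\equiv 0\pmod 3$, yet $q=\dft(107)-\dft(321)=-2+3\log_3 3=1\in\Q$. The breakdown is exactly the obstacle you flagged in your last paragraph: by Selfridge's formulas the factors $E(\cpx{n})$ and $E(\cpx{m})$ carry powers of $2$, and the resulting $\log_3 2$ contributions inside $3\log_3\!\bigl(\tfrac{E(\cpx{n})\,m}{E(\cpx{m})\,n}\bigr)$ cancel \emph{exactly} against the $\dft(2)=2-3\log_3 2$ terms in $c_n-c_m$. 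So linear independence of $1$ and $\log_3 2$ over $\Q$ gives you nothing here.

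If you carry out that cancellation honestly you recover the identity
\[
q=\dft(n)-\dft(m)=(\cpx{n}-\cpx{m})+3\log_3(m/n),
\]
which is immediate from the \emph{definition} of $\dft$ and is what the paper uses directly, bypassing Proposition~\ref{dRformulae} altogether. From this, $q\in\Q$ forces $\log_3(m/n)\in\Q$, hence $m/n$ is an integral power of $3$; since $m>n$ we get $m=n\cdot 3^k$ with $k\ge 1$, and then $q=\cpx{n}-\cpx{m}+3k\in\Z$, with $q\ge 0$ by Proposition~\ref{multdft}(3) as you note.
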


\begin{proof}
(1) If $q=\dft(n)-\dft(m)$ is rational, then $k=\log_3(m/n)\in \Q$ is
rational;
since $m/n$ is rational, the only way this can occur is if $\log_3(m/n)$ is an
integer $k$, in which case, since $m > n,$ $m = n \cdot 3^k$ with $k \ge 1$.
It then follows from the definition of defect that
$q=\cpx{n}+3k-\cpx{m}$.

(2) By (1) we know that $m=n \cdot 3^k$ for some $k \ge 1$. By
Proposition \ref{multdft} (3)
we have $\dft(n \cdot 3^j) \le \delta (n)$,  for  $j \ge 0$ 
and it also gives
$\dft(m)= \dft(n \cdot 3^k) \le \dft(n \cdot 3^j),$ for $0 \le j \le k$.
Since $\dft(m)=\dft(n)$ by hypothesis, this gives $\dft(n \cdot 3^j) =
\dft(n)$,
so that $ ||n \cdot 3^j|| = 3j+ ||n||: 0 \le j \le k.$
\end{proof}

The results so far suffice to prove Theorem ~\ref{power-of-3}.

\begin{proof}[Proof of Theorem ~\ref{power-of-3}]
(1) This follows from Proposition \ref{eqdefect}(2).

(2) The non-increasing assertion  follows from Proposition \ref{multdft}(3).
The finiteness of the number of leaders in a sequence $3^k \cdot n$ follows  from Theorem \ref{cj1} (1).
\end{proof}

\subsection{Leaders}\label{sec22}

Again because $\cpx{3n}$ is not always equal to $3+\cpx{n}$, it makes sense to
introduce the following definition:

\begin{defn}
We call a natural number $n$ a \emph{leader} if it cannot be written
most-efficiently as $3m$ for some $m$; i.e., if either $3\nmid n$, or, if $3\mid n$, then $\cpx{n}<3+\cpx{n/3}$.
\end{defn}

For example, $107$ is a leader since $3\nmid 107$, and $321$ is also a leader
since $\cpx{321}=18<3+16=3+\cpx{107}$. However, $963$ is not a leader, as
$\cpx{963}=21=3+\cpx{321}$.
Leaders can be stable or unstable. In this example $107$ is unstable, but by
Theorem \ref{cj1}
some multiple $3^K \cdot 107$ will be stable, and the smallest such multiple
will be a stable leader.

We have the following alternate characterization of leaders:
\begin{prop}
\label{1stofdft}
(1) A number $n$ is a leader if and only if it is the smallest number having its given defect value.

(2) For any natural number $m$, there is a unique leader $n\le m$ such that
$\dft(n)= \dft(m)$. For it $m=n \cdot 3^k$ for some $k \ge 0$.
\end{prop}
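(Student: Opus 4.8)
The plan is to deduce Proposition~\ref{1stofdft} directly from the structural results already established, chiefly Proposition~\ref{eqdefect} and Theorem~\ref{cj1}(1), together with Proposition~\ref{multdft}(3). For part (1), I would argue both directions. Suppose $n$ is a leader. If $n$ were \emph{not} the smallest integer with defect $\dft(n)$, then there would be some $n' < n$ with $\dft(n') = \dft(n)$; by Proposition~\ref{eqdefect}(2) this forces $n = n' \cdot 3^k$ for some $k \ge 1$, and moreover $\cpx{n' \cdot 3^j} = 3j + \cpx{n'}$ for $0 \le j \le k$. In particular $\cpx{n} = 3 + \cpx{n/3}$, so $n$ can be written most-efficiently as $3(n/3)$, contradicting that $n$ is a leader. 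Conversely, suppose $n$ is the smallest integer having its defect value. If $3 \nmid n$ then $n$ is a leader by definition, so assume $3 \mid n$; I must show $\cpx{n} < 3 + \cpx{n/3}$. If instead $\cpx{n} = 3 + \cpx{n/3}$ (the only other possibility, since $\cpx{n} \le 3 + \cpx{n/3}$ always), then by Proposition~\ref{multdft}(3) we would get $\dft(n) = \dft(n/3)$, exhibiting the strictly smaller integer $n/3$ with the same defect — contradiction. Hence $n$ is a leader.

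For part (2), given any $m$, Theorem~\ref{cj1}(1) — or more simply the non-increasing, nonnegative, integrally-decreasing nature of the sequence $\dft(m), \dft(m/3^{?})\ldots$ — I would instead run the argument downward: repeatedly replace $m$ by $m/3$ as long as $3 \mid m$ and $\cpx{m} = 3 + \cpx{m/3}$. Each such step preserves the defect by Proposition~\ref{multdft}(3) and strictly decreases the integer, so the process terminates at some $n$ with $\dft(n) = \dft(m)$ and $m = n \cdot 3^k$, $k \ge 0$; by construction $n$ is a leader. Uniqueness: if $n_1, n_2$ are both leaders with $\dft(n_i) = \dft(m)$, then $\dft(n_1) = \dft(n_2)$, so by Proposition~\ref{eqdefect}(2) (applied in whichever direction, depending on which is larger) one is a $3$-power multiple of the other; but a leader that is a proper multiple $n' \cdot 3^k$ with $k \ge 1$ of another integer of the same defect cannot be a leader, by the argument in part (1). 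Hence $n_1 = n_2$.

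I expect no serious obstacle here: every ingredient is already in place, and the proof is essentially a matter of chaining Proposition~\ref{eqdefect}(2) with the definition of a leader and with Proposition~\ref{multdft}(3). The one point requiring a little care is making sure the downward-descent argument in part (2) actually terminates and lands on a leader — this is where one uses that $n$ is a positive integer strictly decreasing at each step, so only finitely many steps occur, and that the stopping condition ($3 \nmid n$, or $\cpx{n} < 3 + \cpx{n/3}$) is exactly the definition of a leader. The other mild subtlety is being careful about the direction of Proposition~\ref{eqdefect}(2), which is stated for $m > n$; in the uniqueness argument one simply names the larger of $n_1, n_2$ appropriately before invoking it.
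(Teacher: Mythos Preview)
Your proof is correct and essentially identical to the paper's. The only cosmetic difference is in part~(2): the paper simply takes $n$ to be the smallest integer with defect $\dft(m)$ and then invokes part~(1) (for existence and uniqueness of the leader) together with Proposition~\ref{eqdefect} (for $m = n\cdot 3^k$), whereas you construct $n$ by an explicit downward descent and argue uniqueness separately---but both routes use exactly the same ingredients.
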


\begin{proof}
(1) If this were false, there would a leader $n$ with  some $n' < n$ with $\delta(n')=\delta(n)$.
By Proposition \ref{eqdefect} (2) $n = 3^k \cdot n'$ with $k \ge 1$ and 
$||n' \cdot 3^j|| = 3j + ||n'||$ for $0 \le j \le k$. But then $n/3 = n' \cdot
3^{k-1}$ is an integer
and $||n/3|| = ||n'||+ 3k -3= ||n||-3$, which contradicts $n$ being a leader.

Conversely, if $n$ is the first number of its defect and is divisible by $3$,
then we cannot have $\cpx{n}=\cpx{n/3}+3$, or else by Proposition~\ref{multdft}
we would obtain $\dft(n)=\dft(n/3)$, contradicting minimality.

(2) Pick $n$ to be the smallest number such that $\dft(n)=\dft(m)$; this is
the unique leader satisfying $\dft(n)=\dft(m)$. Then $m=3^k n$ for some
$k\ge 0$ by Proposition~\ref{eqdefect}.
\end{proof}

To summarize, if $\delta$ occurs as a defect, then the set of integers
$$
N(\delta) := \{m:\, \dft(m)= \delta\},
$$
having a given defect value $\delta$
has a smallest element that is a leader. If this leader $n$ is unstable, then
$N(\delta) =\{ 3^j \cdot n: 0 \le j \le j(\delta)\}$. If this leader
is stable, then $N(\delta)= \{ 3^j \cdot n: \, j \ge 0\}$ is an infinite set.
Furthermore if $3 \nmid n$ then $n$ is a leader, and there is a unique $K= K(n)
\ge 0$
such that $n' = 3^K n$ is a stable leader.

\section{Good factorizations and solid numbers}

Given a natural number $n>1$, by the dynamic programming definition of
complexity there are either two numbers $u$ and $v$, both smaller than $n$,
such that $n=u\cdot v$ and $\cpx{n}=\cpx{u}+\cpx{v}$, or such that $n=u+v$ and
$\cpx{n}=\cpx{u}+\cpx{v}$. In the case $u$ and $v$
such that $n=u+v$, and $\cpx{n}=\cpx{u}+\cpx{v}$ we say
$n$ is {\em additively reducible}. In the case $n=u\cdot v$ and
$\cpx{n}=\cpx{u}+\cpx{v}$ we say
$n$ is {\em multiplicatively reducible}.
Some numbers $n$ are reducible in both senses. For instance, $10=9+1$ with
$\cpx{10}=\cpx{9}+\cpx{1}$, and $\cpx{10}=2\cdot 5$ with
$\cpx{10}=\cpx{2}+\cpx{5}$.

\subsection{Additive Irreducibility and Solid Numbers}

We introduce terminology for numbers
not being additively reducible.

\begin{defn}
We will say that a natural number $n$ is {\em additively irreducible} if it
cannot be written most-efficiently as a sum, i.e., for all $u$ and
$v$ such that $n=u+v$, we have $\cpx{n}<\cpx{u}+\cpx{v}$. We call such values of
$n$ {\em solid numbers}.
\end{defn}

The first few solid numbers are
\begin{align*}
\{1, 6, 8, 9, 12, 14, 15, 16, 18, 20, 21, 24, 26, 27, \ldots\}
\end{align*}

It can be shown that $3^n$ is a solid number for $n\ge 2$, and so there are
infinitely many solid numbers.  Experimental evidence suggests that a positive
fraction of integers below $x$ are solid numbers, as $x \to \infty$.

\subsection{Multiplicative Irreducibility and Good Factorizations}

We introduce further terminology for factorizations that respect complexity.

\begin{defn}
A factorization $n=u_1\cdot u_2\cdots u_k$ is a \emph{good factorization} of $n$
if $n$ can be written most-efficiently as $u_1\cdot u_2\cdots u_k$, i.e., if the
following equality holds:
\begin{displaymath}
\cpx{n}=\cpx{u_1}+\cpx{u_2}+\ldots+\cpx{u_k}.
\end{displaymath}
The factorization containing only one factor is automatically good; this will be
called a \emph{trivial good factorization}.
\end{defn}

\begin{prop}
\label{goodfac}
If $n=n_1\cdot n_2\cdot \ldots \cdot n_k$ is a good factorization then for any
nonempty subset $I\subset \{1,2,\dots,k\}$ the product $m=\prod_{j\in I} n_j$
is a good factorization of $m$.
\end{prop}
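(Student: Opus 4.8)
The plan is to prove this by a simple squeeze argument using subadditivity of complexity under multiplication. First I would record the two inequalities that are available for free: on the one hand, since $m = \prod_{j\in I} n_j$, repeated application of $\cpx{ab}\le\cpx{a}+\cpx{b}$ gives
\[
\cpx{m}\le \sum_{j\in I}\cpx{n_j},
\]
and similarly, writing $I^c = \{1,\dots,k\}\setminus I$ for the complementary index set and $m' = \prod_{j\in I^c} n_j$ (with the convention that an empty product is $1$, which only strengthens things since $\cpx{1}=1$; if $I^c$ is empty then $m=n$ and there is nothing to prove), we get $\cpx{m'}\le\sum_{j\in I^c}\cpx{n_j}$.

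Next I would chain these together through $n = m\cdot m'$. Subadditivity gives $\cpx{n}\le\cpx{m}+\cpx{m'}$, and combining with the two bounds above yields
\[
\cpx{n}\le \cpx{m}+\cpx{m'}\le \sum_{j\in I}\cpx{n_j}+\sum_{j\in I^c}\cpx{n_j} = \sum_{j=1}^k\cpx{n_j}.
\]
But the hypothesis that $n=n_1\cdots n_k$ is a good factorization says precisely that $\cpx{n}=\sum_{j=1}^k\cpx{n_j}$, so the right-hand side equals $\cpx{n}$. Hence every inequality in the chain is an equality; in particular $\cpx{m}+\cpx{m'} = \sum_{j\in I}\cpx{n_j}+\sum_{j\in I^c}\cpx{n_j}$ together with the individual bounds $\cpx{m}\le\sum_{j\in I}\cpx{n_j}$ and $\cpx{m'}\le\sum_{j\in I^c}\cpx{n_j}$ forces $\cpx{m}=\sum_{j\in I}\cpx{n_j}$, which is exactly the assertion that $m=\prod_{j\in I}n_j$ is a good factorization of $m$.

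There is essentially no obstacle here: the only mild points to handle carefully are the edge cases ($I$ equal to the whole index set, or $I$ a singleton, both trivial) and the bookkeeping that the product over $I$ is itself formed by iterated binary multiplications so that subadditivity applies as stated. The argument is entirely formal once Proposition~\ref{multdft}-style subadditivity and the definition of "good factorization" are in hand.
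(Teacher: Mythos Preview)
Your proof is correct and is essentially the same as the paper's: both use subadditivity on the complementary factor $\prod_{j\notin I}n_j$ to squeeze $\cpx{m}$ against $\sum_{j\in I}\cpx{n_j}$, with the paper phrasing it as a contradiction and you phrasing it directly.
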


\begin{proof}
If the factorization of $m$ were not good, then we would have
\begin{displaymath}
\cpx{m}<\sum_{j\in I} \cpx{n_j}
\end{displaymath}
But then
\begin{displaymath}
\cpx{n} = \Bigl \Vert m \prod_{j\notin I} n_j\Bigr \Vert
<\sum_{j\in I} \cpx{n_j}+\sum_{j\notin I} \cpx{n_j}
=\sum_{j=1}^k \cpx{n_j}
\end{displaymath}
and the given factorization of $n$ would not be a good factorization.
\end{proof}

\begin{prop}
\label{goodfacconcat}
(1) If $n=n_1\cdot n_2 \cdot ... \cdot n_k$ is a good factorization, and each
$n_i=n_{i,1} \cdot
\ldots \cdot n_{i,l_i}$ is a good factorizations, then so is $n=\prod_{i=1}^k
\prod_{j=1}^{l_i} n_{i,j}$.

(2) If $n=n_1\cdot n_2\cdot \ldots \cdot n_k$ is a good factorization, and $I_1,
I_2, \ldots, I_l$ is a partition of $\{1,\ldots,k\}$, then letting
$m_i=\prod_{j\in I_i} n_j$, we have that $n=\prod_{i=1}^l m_i$ is a good
factorization.
\end{prop}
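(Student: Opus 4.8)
The statement to prove is Proposition~\ref{goodfacconcat}, which has two parts concerning how good factorizations compose.

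\medskip

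The plan is to deduce both parts from the submultiplicativity of complexity, $\cpx{ab}\le\cpx{a}+\cpx{b}$, combined with Proposition~\ref{goodfac}. For part (1), I would observe that iterating submultiplicativity gives
\begin{displaymath}
\cpx{n}\le\sum_{i=1}^k\sum_{j=1}^{l_i}\cpx{n_{i,j}}.
\end{displaymath}
On the other hand, since each $n_i=n_{i,1}\cdots n_{i,l_i}$ is a good factorization, we have $\cpx{n_i}=\sum_{j=1}^{l_i}\cpx{n_{i,j}}$, and since $n=n_1\cdots n_k$ is a good factorization, $\cpx{n}=\sum_{i=1}^k\cpx{n_i}$. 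Substituting the former into the latter gives $\cpx{n}=\sum_{i,j}\cpx{n_{i,j}}$, so the displayed inequality is in fact an equality, which is exactly the assertion that $n=\prod_{i,j}n_{i,j}$ is a good factorization.

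\medskip

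For part (2), the argument is similar but runs in the reverse direction. By Proposition~\ref{goodfac}, each subproduct $m_i=\prod_{j\in I_i}n_j$ is a good factorization of $m_i$, so $\cpx{m_i}=\sum_{j\in I_i}\cpx{n_j}$. Then by submultiplicativity applied to the factorization $n=m_1\cdots m_l$,
\begin{displaymath}
\cpx{n}\le\sum_{i=1}^l\cpx{m_i}=\sum_{i=1}^l\sum_{j\in I_i}\cpx{n_j}=\sum_{j=1}^k\cpx{n_j}=\cpx{n},
\end{displaymath}
the last equality because $n=n_1\cdots n_k$ is a good factorization; hence equality holds throughout and $n=m_1\cdots m_l$ is good.

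\medskip

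There is essentially no obstacle here; the only thing to be careful about is the direction of the inequalities and making sure that the hypothesis "good factorization" is used in the right place in each part (in (1) the fine factorizations supply equalities that get pushed up, while in (2) Proposition~\ref{goodfac} supplies the equalities for the coarsened pieces). One could alternatively prove (1) as a special case of a repeated application of the two-factor version and (2) directly, but the symmetric treatment above is cleanest. Note also that (2) can be seen as a converse-flavored companion to (1): (1) refines a good factorization, (2) coarsens one.
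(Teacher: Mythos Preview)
Your proof is correct and follows essentially the same approach as the paper: part~(1) substitutes the equalities $\cpx{n_i}=\sum_j\cpx{n_{i,j}}$ into $\cpx{n}=\sum_i\cpx{n_i}$, and part~(2) uses Proposition~\ref{goodfac} to obtain $\cpx{m_i}=\sum_{j\in I_i}\cpx{n_j}$ and then sums. Your explicit appeal to submultiplicativity is harmless but unnecessary---once the chain of equalities is established, the factorization is good by definition, so the inequality step can be omitted.
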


\begin{proof}
(1) We have that $\cpx{n_i}=\sum_{j=1}^{l_i} \cpx{n_{i,j}}$ and
$ \cpx{n}=\sum_{i=1}^k \cpx{n_i}$, so
\[\cpx{n}=\sum_{i=1}^k \sum_{j=1}^{l_i}
\cpx{n_{i,j}}
\]
and we are done.

(2) This follows from Proposition~\ref{goodfac} together with (1).
\end{proof}

\begin{defn}
We will say that
a natural number $n$ is {\em multiplicatively irreducible}
(abbreviated \emph{$m$-irreducible}) if $n$ has no nontrivial good
factorizations.
\end{defn}

Proposition~\ref{goodfacconcat}(2) shows $n$ is $m$-irreducible if and only if
all nontrivial factorizations $n=uv$ have $\cpx{n}<\cpx{u}+\cpx{v}$.
Thus a prime number $p$ is automatically $m$-irreducible since the only
factorization is $p=p\cdot1$ and obviously we have
$\cpx{p}<\cpx{p}+1=\cpx{p}+\cpx{1}$. However, the converse does not hold.
For instance, $46$ is a composite number which is $m$-irreducible.

\begin{prop}
\label{facexists}
Any natural number has a good factorization into $m$-irreducibles.
\end{prop}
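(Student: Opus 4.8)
The plan is to prove Proposition~\ref{facexists} by strong induction on $n$, using the multiplicative structure to decompose and the induction hypothesis to refine. The base case $n=1$ is immediate since $1$ has only the trivial factorization, which is good, and $1$ is $m$-irreducible. For $n>1$, I would split into two cases according to whether $n$ is $m$-irreducible.

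If $n$ is $m$-irreducible, then the trivial factorization $n=n$ already exhibits $n$ as a (length-one) good factorization into $m$-irreducibles, and we are done. If $n$ is not $m$-irreducible, then by definition it has a nontrivial good factorization $n=uv$ with $1<u,v<n$ and $\cpx{n}=\cpx{u}+\cpx{v}$. Now apply the induction hypothesis to $u$ and to $v$: each has a good factorization into $m$-irreducibles, say $u=\prod_i a_i$ and $v=\prod_j b_j$ with all $a_i$, $b_j$ being $m$-irreducible. Then $n=\prod_i a_i \cdot \prod_j b_j$, and Proposition~\ref{goodfacconcat}(1) (applied to the good two-factor factorization $n=uv$ together with the good factorizations of $u$ and $v$) tells us this combined factorization of $n$ is good. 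Since all its factors are $m$-irreducible, this is the desired factorization.

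The only point requiring a little care is ensuring the induction actually terminates, i.e.\ that when $n$ is not $m$-irreducible we really do get factors strictly smaller than $n$: this is guaranteed because a nontrivial good factorization, by the remark following Proposition~\ref{goodfacconcat}(2), can be taken to have the form $n=uv$ with both $u$ and $v$ proper divisors, hence $1<u,v<n$, so strong induction applies. I do not anticipate any genuine obstacle here; the entire content is packaged in Proposition~\ref{goodfacconcat}(1), and the proof is a routine induction. If one wished to be even more economical, one could instead argue by a minimal-counterexample / well-ordering argument: take $n$ minimal without such a factorization; it cannot be $m$-irreducible, so $n=uv$ nontrivially and goodly, and then $u,v<n$ each have good factorizations into $m$-irreducibles, which concatenate to one for $n$ by Proposition~\ref{goodfacconcat}(1) — contradiction.
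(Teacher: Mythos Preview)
Your proof is correct and follows essentially the same approach as the paper: strong induction on $n$, with the $m$-irreducible case handled trivially and the non-$m$-irreducible case handled by taking a nontrivial good two-factor factorization $n=uv$, applying the induction hypothesis to $u$ and $v$, and concatenating via Proposition~\ref{goodfacconcat}(1). The paper justifies $u,v<n$ slightly more directly by observing that $n=n\cdot 1$ is never a good factorization since $\cpx{1}=1$, whereas you route this through the remark after Proposition~\ref{goodfacconcat}(2); both arrive at the same conclusion.
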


\begin{proof}
We may apply induction and assume that any $m<n$ has a factorization into
$m$-irreducibles. If $n$ is $m$-irreducible, we are done. Otherwise, $n$ has a
good factorization $n=uv$. Observe that $n=n\cdot 1$ is never a good
factorization, since $\cpx{1}=1$; hence, $u$, $v<n$. Then the induction
hypothesis implies that $u$ and $v$ have good factorizations into
$m$-irreducibles.  Multiplying these factorizations together and applying
Proposition~\ref{goodfacconcat}, we obtain a good factorization of $n$ into
$m$-irreducibles.
\end{proof}

Good factorizations into $m$-irreducibles need not be unique. For
$4838 = 2 \cdot 41 \cdot 59$,
we find that $2\cdot(41\cdot59)$, $(2\cdot59)\cdot41$ and $(2\cdot41)\cdot59$
are all good factorizations, but the full factorization $2\cdot41\cdot59$ is
not a good factorization. (Thanks to Juan Arias de Reyna for this example.) This is deducible from the following data:
\begin{gather*}
\cpx{2\cdot41\cdot 59}=27,\\
\cpx{2}=2,\quad
\cpx{41}=12,\quad
\cpx{59}=14.\\
\cpx{2\cdot41}=13,\quad
\cpx{2\cdot59}=15,\quad
\cpx{41\cdot59}=25,
\end{gather*}

\subsection{Good factorizations and leaders}

The next two propositions show how the notion of good factorization interacts
with leaders and stability.

\begin{prop}
\label{leaderfac}
Let $n=n_1\cdot n_2\cdots n_r$ be a good factorization. If $n$ is a leader
then each of the factors $n_j$ is a leader.
\end{prop}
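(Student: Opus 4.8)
The plan is to prove the contrapositive: if some factor, say $n_1$, fails to be a leader, then $n$ cannot be a leader either. Since $n_1$ is not a leader, $3 \mid n_1$ and $n_1$ can be written most-efficiently as $3 \cdot (n_1/3)$, i.e.\ $\cpx{n_1} = 3 + \cpx{n_1/3}$. The idea is to substitute this factorization into the good factorization of $n$ to exhibit $3 \mid n$ together with $\cpx{n} = 3 + \cpx{n/3}$, contradicting that $n$ is a leader.

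First I would record that $n = n_1 \cdot n_2 \cdots n_r$ being a good factorization means $\cpx{n} = \sum_{j=1}^r \cpx{n_j}$. Next, since $3 \mid n_1$, clearly $3 \mid n$, and $n/3 = (n_1/3) \cdot n_2 \cdots n_r$. Using subadditivity of complexity under multiplication,
\[
\cpx{n/3} \le \cpx{n_1/3} + \sum_{j=2}^r \cpx{n_j} = (\cpx{n_1} - 3) + \sum_{j=2}^r \cpx{n_j} = \cpx{n} - 3,
\]
where the middle equality uses $\cpx{n_1/3} = \cpx{n_1} - 3$ (from $n_1$ not being a leader) and the last uses goodness of the factorization. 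On the other hand, Proposition~\ref{multdft}(3) (or simply $\cpx{n} \le 3 + \cpx{n/3}$) gives $\cpx{n/3} \ge \cpx{n} - 3$. Hence $\cpx{n/3} = \cpx{n} - 3$, so $n$ can be written most-efficiently as $3 \cdot (n/3)$, meaning $n$ is not a leader.

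There is no real obstacle here; the only point requiring a moment's care is the direction of the easy inequality $\cpx{n} \le 3 + \cpx{n/3}$, which holds because $\cpx{3} = 3$ and complexity is subadditive under multiplication, so that once we have the reverse inequality from the good factorization we get equality and thus the most-efficient representation $n = 3 \cdot (n/3)$. I would then remark that the same argument applied to any index $j$ (relabeling) shows every $n_j$ must be a leader, completing the proof.
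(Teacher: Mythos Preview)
Your proof is correct and matches the paper's argument essentially line for line: both prove the contrapositive by assuming some factor $n_1$ is not a leader, deducing $\cpx{n/3}\le\cpx{n}-3$ from subadditivity and goodness of the factorization, and combining with $\cpx{n}\le 3+\cpx{n/3}$ to conclude $n$ is not a leader.
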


\begin{proof}
Suppose otherwise; without loss of generality, we may assume that $n_1$ is not a
leader, so $3\mid n_1$ and $\cpx{n_1}=3+\cpx{n_1/3}$. So $3\mid n$ and
\begin{multline*}
\cpx{n/3}=\cpx{(n_1/3)\cdot n_2\cdot\ldots\cdot n_r}\le
\cpx{n_1/3}+\sum_{j=2}^r \cpx{n_j}\\ =
\cpx{n_1}-3+\sum_{j=2}^r \cpx{n_j}=\cpx{n}-3.
\end{multline*}
Since $\cpx{n}\le 3+\cpx{n/3}$, we have $\Vert n\Vert= 3+\Vert n/3\Vert$,
and thus $n$ is not a leader.
\end{proof}

\begin{prop}
\label{stablefac}
Let $n=n_1\cdot n_2\cdots n_r$ be a good factorization. If $n$ is stable,
then each of its factors $n_j$ is stable.
\end{prop}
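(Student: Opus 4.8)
The plan is to mirror the structure of the proof of Proposition~\ref{leaderfac}, but tracking stability rather than the leader property, and to use the characterization of stability in terms of the defect (Proposition~\ref{stabdft}) together with the additivity of the defect along good factorizations (Proposition~\ref{multdft}(2)). The key observation is that if $n = n_1 \cdots n_r$ is a good factorization, then $\cpx{n} = \sum_j \cpx{n_j}$, which by Proposition~\ref{multdft}(2) is exactly the statement that $\dft(n) = \sum_j \dft(n_j)$.

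First I would fix an arbitrary factor, say $n_1$ (without loss of generality), and an arbitrary $k \ge 1$, and aim to show $\dft(3^k n_1) = \dft(n_1)$; by Proposition~\ref{stabdft} this suffices. Consider the factorization $3^k n = (3^k n_1)\cdot n_2 \cdots n_r$. On one hand, $\dft(3^k n) \le \dft(3^k n_1) + \sum_{j=2}^r \dft(n_j)$ by Proposition~\ref{multdft}(2) (subadditivity of the defect). On the other hand, since $n$ is stable, Proposition~\ref{stabdft} gives $\dft(3^k n) = \dft(n) = \sum_{j=1}^r \dft(n_j)$, using that the original factorization is good. Combining these yields
\[
\sum_{j=1}^r \dft(n_j) \le \dft(3^k n_1) + \sum_{j=2}^r \dft(n_j),
\]
hence $\dft(n_1) \le \dft(3^k n_1)$. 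But Proposition~\ref{multdft}(3) gives the reverse inequality $\dft(3^k n_1) \le \dft(n_1)$, so equality holds. Since $k \ge 1$ was arbitrary, $n_1$ is stable by Proposition~\ref{stabdft}, and the same argument applies to each $n_j$.

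I do not anticipate a serious obstacle here; the argument is a short bookkeeping computation once one has the right invariants in hand. The only mild subtlety is the bookkeeping with the subadditivity inequality: one must apply Proposition~\ref{multdft}(2) to the partition $\{3^k n_1\} \cup \{n_2,\dots,n_r\}$ of the factorization of $3^k n$, which is legitimate since subadditivity of the defect over products holds without any hypotheses. Everything else is forced by sandwiching between the two inequalities coming from Proposition~\ref{multdft}(2) and~(3). One could alternatively phrase the whole thing in terms of complexities directly, as in Proposition~\ref{leaderfac}, estimating $\cpx{3^k n}$ from the factorization $(3^k n_1) n_2 \cdots n_r$ and using $\cpx{3^k n_1} \le 3k + \cpx{n_1}$; the defect formulation is cleaner but the two are equivalent.
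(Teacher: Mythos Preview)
Your proof is correct and is essentially the same argument as the paper's, which proceeds by contrapositive in the language of complexities: assuming $n_1$ unstable with $\cpx{3^k n_1}<\cpx{n_1}+3k$, it bounds $\cpx{3^k n}\le\cpx{3^k n_1}+\sum_{j\ge2}\cpx{n_j}<\cpx{n}+3k$. Your defect-based direct version and the paper's complexity-based contrapositive are, as you yourself note, the same computation in different dress.
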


\begin{proof}
Suppose otherwise.  Without loss of generality, we may assume that $n_1$ is
unstable; say $\cpx{3^k n_1}<\cpx{n_1}+3k$.  So
\begin{multline*}
\cpx{3^k n}=\cpx{(3^k n_1)\cdot n_2\cdot\ldots\cdot n_r}\le
\cpx{3^k n_1}+\sum_{j=2}^r \cpx{n_j}\\<
\cpx{n_1}+3k+\sum_{j=2}^r \cpx{n_j}=\cpx{n}+3k.
\end{multline*}
and thus $n$ is not stable.
\end{proof}

Assembling all these results we deduce that being a leader and being stable are
both inherited properties for subfactorizations of good factorizations.

\begin{prop}
\label{lsfac2}
Let $n=n_1\cdot n_2\cdots n_r$ be a good factorization, and $I$ be a nonempty
subset of $\{1,\ldots,r\}$; let $m=\prod_{i\in I} n_i$.  If $n$ is a leader,
then so is $m$.  If $n$ is stable, then so is $m$.
\end{prop}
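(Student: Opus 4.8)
The plan is to derive Proposition~\ref{lsfac2} directly from the two structural facts already established: Proposition~\ref{goodfac}, which says that any subproduct of a good factorization is itself a good factorization, together with Propositions~\ref{leaderfac} and \ref{stablefac}, which say that the factors of a good factorization inherit the properties of being a leader and being stable. First I would apply Proposition~\ref{goodfacconcat}(2), or equivalently Proposition~\ref{goodfac}, to the partition of $\{1,\ldots,r\}$ into the two blocks $I$ and its complement $I^c$ (taking $I^c$ to be absorbed into a single extra factor, or simply noting the one-block case if $I^c$ is empty). This exhibits $n = m \cdot \prod_{j \notin I} n_j$ as a good factorization of $n$ with $m = \prod_{i \in I} n_i$ as one of its factors.

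Next, I would invoke Proposition~\ref{leaderfac}: since $n$ is a leader and $n = m \cdot \prod_{j\notin I} n_j$ is a good factorization, every factor of it is a leader; in particular $m$ is a leader. Analogously, if $n$ is stable, then Proposition~\ref{stablefac} applied to the same good factorization $n = m \cdot \prod_{j \notin I} n_j$ shows that $m$ is stable. That completes the argument for both assertions.

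There is essentially no obstacle here; this proposition is a bookkeeping corollary that repackages the preceding three propositions into the form most convenient for later use. The only point requiring a moment's care is the degenerate case $I = \{1,\ldots,r\}$, where $m = n$ and the statement is trivial, and the case where $\prod_{j \notin I} n_j = 1$ would force $I$ to be all of $\{1,\ldots,r\}$ anyway (since no $n_j$ equals $1$ in a good factorization, as $\cpx{1}=1$ makes $n = n \cdot 1$ never good). So in the nontrivial case the complementary product is a genuine integer factor greater than $1$, and Propositions~\ref{leaderfac} and \ref{stablefac} apply verbatim.

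\begin{proof}
If $I=\{1,\ldots,r\}$ then $m=n$ and there is nothing to prove, so assume $I$ is
a proper nonempty subset. Let $m'=\prod_{j\notin I} n_j$, which is an integer
with $m'>1$. By Proposition~\ref{goodfac} (applied to the subsets $I$ and its
complement), or directly by Proposition~\ref{goodfacconcat}(2) applied to the
partition $\{I, \{1,\ldots,r\}\setminus I\}$, the factorization $n=m\cdot m'$ is
a good factorization. If $n$ is a leader, then Proposition~\ref{leaderfac}
applied to this good factorization shows that each factor is a leader; in
particular $m$ is a leader. If $n$ is stable, then Proposition~\ref{stablefac}
applied to the same good factorization shows that each factor is stable; in
particular $m$ is stable.
\end{proof}
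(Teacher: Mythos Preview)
Your proof is correct and follows exactly the approach of the paper, which simply states that the result is immediate from Proposition~\ref{leaderfac}, Proposition~\ref{stablefac}, and Proposition~\ref{goodfacconcat}(2). You have merely spelled out the details and handled the trivial case $I=\{1,\ldots,r\}$ explicitly.
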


\begin{proof}
Immediate from Proposition~\ref{leaderfac}, Proposition~\ref{stablefac}, and
Proposition~\ref{goodfacconcat}.(2).
\end{proof}

\section{The Classification Method}
\label{mainlem}

Here, we state and prove a result (Theorem \ref{themethod}) that will be our
primary tool for the rest of the paper. By applying it repeatedly, for any
$r>0$, we can put restrictions on what integers $n$ can satisfy $\dft(n)<r$.

\begin{defn}
(1) For any real $r\ge0$, define $A_r$ to be $\{n\in\mathbb{N}:\dft(n)<r\}$.

(2) Define
$B_r$ to be the set consisting of those elements of $A_r$ that are leaders.
\end{defn}

While $A_r$ is our main object of interest, it turns out to be easier
and more natural to deal with $B_r$.
Note that knowing $B_r$ is enough to determine $A_r$, as
expressed in the following proposition:

\begin{prop}
\[A_r=\{ 3^k n: n\in B_r, k\ge 0 \}\]
\end{prop}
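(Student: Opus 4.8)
The plan is to show the two inclusions $A_r \supseteq \{3^k n : n \in B_r,\ k \ge 0\}$ and $A_r \subseteq \{3^k n : n \in B_r,\ k \ge 0\}$ separately, relying entirely on the structure of chains of equal defect already developed in Section~\ref{secdft}.

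For the inclusion $\{3^k n : n \in B_r,\ k \ge 0\} \subseteq A_r$, take $n \in B_r$ and $k \ge 0$. By definition $n$ is a leader with $\dft(n) < r$. By Proposition~\ref{multdft}(3) we have $\dft(3^k n) \le \dft(n) < r$, so $3^k n \in A_r$. This direction is immediate and needs no case analysis.

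For the reverse inclusion, let $m \in A_r$, so $\dft(m) < r$. By Proposition~\ref{1stofdft}(2), there is a unique leader $n \le m$ with $\dft(n) = \dft(m)$, and moreover $m = 3^k n$ for some $k \ge 0$. Since $\dft(n) = \dft(m) < r$, the leader $n$ lies in $A_r$, hence $n \in B_r$, and therefore $m = 3^k n$ lies in the claimed set. This is the substantive direction, but it is short because Proposition~\ref{1stofdft}(2) has already packaged exactly the statement we need: every integer is a power of $3$ times the unique leader of its defect class.

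I do not anticipate a genuine obstacle here; the proposition is essentially a bookkeeping corollary of Proposition~\ref{1stofdft} together with the monotonicity in Proposition~\ref{multdft}(3). The only point requiring a little care is to make sure both set descriptions are interpreted the same way for small $m$ (e.g.\ $m$ a power of $3$, where the leader is $3$ itself rather than $1$, since $\dft(1) = 1 \ne 0 = \dft(3)$); but this is already handled correctly by the conventions established in Proposition~\ref{1stofdft}, so invoking that proposition suffices.
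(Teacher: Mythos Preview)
Your proof is correct and follows essentially the same approach as the paper: the forward inclusion via $\dft(3^k n)\le\dft(n)$ (Proposition~\ref{multdft}(3)), and the reverse inclusion by invoking Proposition~\ref{1stofdft}(2) to write any $m\in A_r$ as $3^k n$ for a leader $n$ with $\dft(n)=\dft(m)$. The additional remark about powers of $3$ is fine but unnecessary, since Proposition~\ref{1stofdft}(2) already handles all cases uniformly.
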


\begin{proof}
If $n\in B_r$, then $\dft(3^k n)\le \dft(n)<r$, so $3^k n\in A_r$.
Conversely, if $m\in A_r$, by Proposition~\ref{1stofdft}(2) we can take $n\ge 1$
and $k\ge 0$ such that $n$ is a leader, $m=3^k n$, and $\dft(m)=\dft(n)$;
then $n\in B_r$ and we are done.
\end{proof}

We now let $\alpha >0$ be a real parameter, specifiable in
advance. The main result puts constraints on the allowable forms of
the dynamic programming recursion (most efficient representations) to compute
integers in $B_{(k+1) \alpha}$
in terms of integers in $B_{j \alpha}$ for $1 \le j \le k$.
However there are some exceptional cases that must be considered separately in
the theorem;
fortunately, for any $\alpha<1$, there are only finitely many.  We will
collect these into a set we call $T_\alpha$.
\begin{defn}
Define $T_\alpha$ to consist of $1$ together with those $m$-irreducible
numbers $n$ which satisfy
\[\frac{1}{n-1}>3^{\frac{1-\alpha}{3}}-1\]
and do not satisfy $\cpx{n}=\cpx{n-b}+\cpx{b}$ for any solid numbers $b$ with
$1<b\le n/2$.
\end{defn}

Observe that for $0< \alpha<1$, the above inequality is equivalent to
\[n<(3^{\frac{1-\alpha}{3}}-1)^{-1}+1\] and hence $T_\alpha$ is a finite set.
For $\alpha \ge 1$, the inequality is trivially satisfied and so
$T_\alpha= T_{1}$. We do not know whether $T_1$ is a finite or an infinite set.
However in our computations we will always choose values $0< \alpha <1.$

We can now state the main classification result, which puts strong
constraints on the form of most efficient decompositions on numbers in sets
$B_{(k+1)\alpha}$.

\begin{thm}
\label{themethod}
Suppose  $0< \alpha <1$ and that $k\ge1$.
Then any $n\in B_{(k+1)\alpha}$ can be most-efficiently
represented in (at least) one of the following forms:
\begin{enumerate}
\item
For $k=1$,
there is either a good factorization $n=u\cdot v$ where
$u,v\in {B}_\alpha$, or a good factorization $n=u\cdot v\cdot w$ with
$u,v,w\in {B}_\alpha$; \\
For $k \ge 2$, there is a good factorization $n=u \cdot v$ where $u\in
B_{i\alpha}$,
$v\in B_{j\alpha}$ with $i+j=k+2$ and $2\le i, j\le k$.
\item $n=a+b$ with $\cpx{n}=\cpx{a}+\cpx{b}$, $a\in A_{k\alpha}$, $b\le a$ a
solid number and
\[\dft(a)+\cpx{b}<(k+1)\alpha+3\log_3 2.\]
\item There is a good factorization $n=(a+b)v$ with $v\in B_\alpha$ and $a$
and $b$ satisfying the conditions in the case (2) above.
\item $n\in T_\alpha$ (and thus in particular either $n=1$ or
$\cpx{n}=\cpx{n-1}+1$.)
\item There is a good factorization $n = u\cdot v$ with $u\in T_\alpha$ and
$v\in B_\alpha$.
\end{enumerate}
\end{thm}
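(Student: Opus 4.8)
The plan is to analyze the dynamic programming recursion at $n$ directly, splitting into the two fundamental cases: either $n$ is additively reducible, or $n$ is a solid number. Throughout we use that $n \in B_{(k+1)\alpha}$ means $n$ is a leader with $\dft(n) < (k+1)\alpha$, and that by Proposition~\ref{multdft}(2) and Proposition~\ref{lsfac2}, good factorizations split the defect additively and pass leadership (and stability) to sub-factors.

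First I would handle the multiplicatively reducible case. If $n$ has a nontrivial good factorization, take one into $m$-irreducibles, say $n = p_1 \cdots p_s$ with $s \ge 2$; by Proposition~\ref{lsfac2} each $p_i$ is a leader, and the defects $\dft(p_i)$ are nonnegative and sum to $\dft(n) < (k+1)\alpha$. The goal is to group these $m$-irreducible factors into two (or, when $k=1$, two or three) bundles whose individual defects fall into the prescribed windows $B_{i\alpha}$, $B_{j\alpha}$ with $i+j = k+2$. This is essentially a bin-packing/pigeonhole argument on the multiset of values $\dft(p_i)/\alpha \in [0,\infty)$ summing to less than $k+1$: I would argue that unless \emph{every} factor has defect less than $\alpha$ one can always peel off a sub-product landing in $[(i-1)\alpha, i\alpha)$ for a suitable $i \ge 2$ with the complement in the matching window. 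The degenerate situation — all $m$-irreducible factors in $B_\alpha$ — is exactly why case (1) for $k=1$ allows a product of \emph{three} factors (since two factors each of defect just under $\alpha$ have total defect just under $2\alpha$, which may not cover $B_{2\alpha}$'s full reach in a single bundle), and why for $k \ge 2$ merging down to two bundles always works; handling these boundary counts carefully is the main bookkeeping. If in addition some factor $p_i$ lies in $T_\alpha$, we land in case (5); if some $m$-irreducible factor is itself solid and additively built as $p_i = a+b$, the factor carries a representation of the form needed in case (3).

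Next, the solid (additively irreducible) case. Here $n$ itself has no most-efficient additive decomposition, so I would instead apply the recursion one multiplicative level down if possible, and otherwise use the structural fact that a solid $m$-irreducible number $n \ne 1$ must — by the very definition of $T_\alpha$ — either lie in $T_\alpha$ (case (4)) or satisfy $\cpx{n} = \cpx{n-b} + \cpx{b}$ for some solid $b$ with $1 < b \le n/2$. In the latter situation, writing $a = n-b$, we must verify $a \in A_{k\alpha}$ and the inequality $\dft(a) + \cpx{b} < (k+1)\alpha + 3\log_3 2$. The key estimate is that $\cpx{n} = \cpx{a} + \cpx{b}$ forces $\dft(n) = \dft(a) + \dft(b) + 3\log_3\frac{n}{a+b}$... more precisely, expanding defects, $\dft(a) = \dft(n) - \cpx{b} + 3\log_3 n - 3\log_3 a$, and since $b \le n/2$ gives $a \ge n/2$, hence $3\log_3(n/a) \le 3\log_3 2$; combined with $\dft(n) < (k+1)\alpha$ this yields both $\dft(a) < (k+1)\alpha + 3\log_3 2 - \cpx{b} \le (k+1)\alpha$ (as $\cpx{b} \ge 3\log_3 2$ for $b \ge 2$... one checks $\cpx{b}\ge 3$ suffices against $3\log_3 2 < 2$) — giving $a \in A_{k\alpha}$ after absorbing the extra $\alpha$ — and the displayed bound directly. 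The case where $n$ is solid but multiplicatively reducible reduces, via the $m$-irreducible factorization and Proposition~\ref{lsfac2}, to applying the solid analysis to an $m$-irreducible sub-factor and then reassembling through case (3) or (5).

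The main obstacle I anticipate is the careful reconciliation of the index windows in case (1): showing that the defect budget $k+1$ (in units of $\alpha$) can always be partitioned as $i + j$ with $i + j = k+2$ and $2 \le i,j \le k$ for $k \ge 2$ — the shift from $k+1$ to $k+2$ reflecting that each of $B_{i\alpha}, B_{j\alpha}$ is an \emph{open} half-line condition $\dft < i\alpha$, so a factor of defect in $[(i-1)\alpha, i\alpha)$ costs "$i$" — while simultaneously ruling out the need for more than two bundles once $k \ge 2$. The rest is a disciplined but routine case chase through $m$-irreducibility, solidity, and the definition of $T_\alpha$, using only Propositions~\ref{multdft}, \ref{goodfac}--\ref{lsfac2} and Selfridge's bound $\cpx{n} \ge 3\log_3 n$.
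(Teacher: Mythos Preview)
Your top-level case split is not exhaustive. You divide into ``multiplicatively reducible'' versus ``solid,'' but an $m$-irreducible $n>1$ (for instance any prime, or $46$) is neither: it has no nontrivial good factorization, yet by the dynamic-programming recursion it \emph{must} have a most-efficient additive decomposition, so it is not solid. Thus numbers like $7$, $19$, $13$ fall in neither of your cases. The paper splits instead into $m$-irreducible versus multiplicatively reducible. In the $m$-irreducible case $n>1$ is necessarily additively reducible; one writes $n=a+b$ most-efficiently with $b\le a$ and $b$ solid, choosing $b>1$ if any such choice exists. Then either $a\in A_{k\alpha}$, giving case~(2), or else $a\notin A_{k\alpha}$ forces $\cpx{b}<\alpha+3\log_3 2<3$, hence $b=1$; since $b>1$ was therefore impossible, the membership criterion for $T_\alpha$ is met and the remaining inequality on $a=n-1$ places $n$ in $T_\alpha$, case~(4). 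Your repeated references to a ``solid $m$-irreducible $n\ne 1$'' and to a solid factor that is ``additively built as $p_i=a+b$'' are self-contradictory by the definition of solid, and signal this same confusion.

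A second gap concerns how cases~(3) and~(5) arise in the multiplicatively reducible branch. They do not come from ``some factor $p_i$ happening to lie in $T_\alpha$.'' Rather, the paper isolates a combinatorial lemma (Lemma~\ref{blocklem}): nonnegative reals summing to less than $k+1$ either admit a partition into two blocks each summing to $<k$ (three blocks each $<1$ when $k=1$), \emph{or} some single term is already $\ge k$. The first alternative gives case~(1). The second alternative is the one that produces~(3) and~(5): the large $m$-irreducible factor $u$ has $\dft(u)\ge k\alpha$, so the product $v$ of the remaining factors has $\dft(v)<\alpha$ and lies in $B_\alpha$; one then applies the $m$-irreducible additive analysis above to $u$ itself, obtaining a representation of $u$ of type~(2) or~(4), whence $n=uv$ is of type~(3) or~(5). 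Your sketch never identifies this ``one large term'' alternative, which is precisely the mechanism you need.
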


We will prove Theorem ~\ref{themethod} in Section \ref{sec43},
after establishing a preliminary combinatorial lemma in Section \ref{sec42}.

To apply Theorem \ref{themethod}, one recursively constructs
from given sets $B_{j\alpha}^{*}$, $A_{j\alpha}^{*}$ for $1 \le j \le k-1$
which contain $B_{j\alpha}, A_{j\alpha}$, respectively,
the set of all $n$ satisfying the relaxed conditions (1)-(5) obtained replacing
$B_{j\alpha}$ by
$B_{j\alpha}^{\ast}$ and $A_{j \alpha}$ by $A_{j \alpha}^{\ast}$.
This new set $B_{(k+1)\alpha}^{\ast\ast}$ contains the set $B_{(k+1)\alpha}$
we want.
Sometimes we can, by other methods, prune some elements from
$B_{(k+1)\alpha}^{\ast\ast}$ that do not belong
to $B_{(k+1)\alpha}$, to obtain a new approximation $B_{(k+1)\alpha}^{\ast}$.
This then determines $A_{(k+1)\alpha}^{\ast} := \{ 3^k n: \, k \ge 0, n \in
B_{(k+1)\alpha}^{\ast}\}$,
permitting continuation to the next level $k+2$.
We will present two applications of this construction:
\begin{enumerate}
\item
To get an upper bound on the cardinality of $B_{(k+1)\alpha}$ of numbers below
a given bound $x$.
\item
To get a lower bound for the complexity $\cpx{n}$ of a number $n$ by showing it
does not belong to a given set $B_{k \alpha}^{*}$; this excludes it from $B_{k
\alpha}$, whence $\cpx{n} \ge 3 \log_3 n + k \alpha$.
\end{enumerate}
In some circumstances we can obtain the exact sets $B_{k \alpha}$ and $A_{k
\alpha}$ for $1 \le k \le k_0$, i.e. we recursively construct $B_{k
\alpha}^{\ast}$ so that $B_{k\alpha}^{\ast} = B_{k \alpha}$.
This requires a perfect pruning operation at each step. Here a good choice of
the parameter $\alpha$ is helpful.

In applications we will typically not use the full strength of Theorem
\ref{themethod}.  Though the representations it yields are most efficient, the
proofs  will typically not
use this fact.  Also, in the addition case (2), the requirement that
$\dft(a)+\cpx{b}<(k+1)\alpha+3\log_3 2$ implies the weaker
requirement that just $\cpx{b}<(k+1)\alpha+3\log_3 2$.
The latter relaxed condition is easier to check, but it does enlarge the initial
set $B_{(k+1) \alpha}^{\ast\ast}$ to be pruned.

\subsection{A Combinatorial Lemma}\label{sec42}

We establish a combinatorial lemma regarding decomposing a sum of
real numbers into blocks.
\begin{lem}
\label{blocklem}
Let $x_1, x_2, \dots, x_r>0$ be real numbers such that $\sum_{i=1}^r x_i <
k+1$, where $k\ge1$ is a natural number.

(1) If $k \ge 2$ then either there is some $i$ with
$x_i\ge k$, or else we may find a partition $A\cup B$ of the set
$\{1,2,\dots, r\}$ such that
\[
\sum_{i\in A}x_i<k,\qquad\sum_{i\in B}x_i<k.
\]

(2) If $k=1$ then either there is some $i$ with $x_i\ge 1$, or else we may find
a partition
$A\cup B\cup C$ of the set $\{1,2,\dots, r\}$ such that
\[
\sum_{i\in A}x_i<1,\qquad\sum_{i\in B}x_i<1,\qquad\sum_{i\in C}x_i<1.
\]
\end{lem}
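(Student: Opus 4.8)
The plan is to treat the two cases by a greedy "bin-packing" argument, exploiting that each individual $x_i$ is small relative to the target bound once the trivial escape clause (some $x_i$ being large) is excluded. For part (1), assume $x_i < k$ for every $i$; I want a partition into two blocks each summing to strictly less than $k$. Order the indices and greedily assign: put elements into block $A$ one at a time until the partial sum of $A$ would first reach or exceed $k$; put that "overflowing" element and everything after it into $B$. Then $\sum_{i\in A} x_i < k$ by construction. For $B$, note that just before the overflow, $\sum_{i\in A} x_i < k$, and the overflowing element has size $< k$ as well; but I need $\sum_{i \in B} x_i < k$, and $\sum_{i\in B} x_i = \bigl(\sum_i x_i\bigr) - \sum_{i \in A, \text{pre-overflow}} x_i$. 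Since the total is $< k+1$ and the pre-overflow $A$-sum is $\ge 0$, this only gives $< k+1$, which is not quite enough — so the naive greedy needs a twist.

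The twist: choose $A$ to be a subset whose sum is as large as possible subject to being $< k$ (this maximum is attained since there are finitely many subsets). Call this maximal sum $s < k$. If $k - s \le 1$, then $\sum_{i \in B} x_i = \bigl(\sum_i x_i\bigr) - s < (k+1) - s = k + (1-s) \le k$ only when $s \ge 1$; hmm, still need care. The cleaner route: let $A$ be a maximal-sum subset with $\sum_{i\in A} x_i < k$, and let $j$ be any index not in $A$ (if none, $B = \emptyset$ works trivially). By maximality, $\sum_{i\in A} x_i + x_j \ge k$, so $\sum_{i\in A} x_i \ge k - x_j > k - k = 0$; more usefully, for \emph{every} $j \notin A$ we have $\sum_{i \in A} x_i \ge k - x_j$. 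Now $\sum_{i \in B} x_i = \sum_{j \notin A} x_j$. I'd pick the single largest element $x_{j_0}$ among those not in $A$; then $\sum_{i\in A} x_i \ge k - x_{j_0}$, so $\sum_{i\in B} x_i = \bigl(\sum_i x_i\bigr) - \sum_{i\in A}x_i < (k+1) - (k - x_{j_0}) = 1 + x_{j_0}$. Since $k \ge 2$ this is not automatically $< k$ unless $x_{j_0} < k-1$. To close this, I would instead add $j_0$ into $A$: the set $A \cup \{j_0\}$ has sum $\ge k$ by maximality, but $A$ itself has sum $< k$, and $x_{j_0} < k$; so moving $j_0$ to the $A$-side, the new $B' = B \setminus \{j_0\}$ has $\sum_{B'} x_i = \sum_B x_i - x_{j_0} < 1$, which is $< k$, while $\sum_{A \cup \{j_0\}} x_i < (\sum_{i\in A} x_i) + k < k + k$ — too big. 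So the correct statement is: among all subsets with sum $< k$, pick one, $A$, of maximal sum; its complement $B$ then has sum $< k$ as well, because otherwise $B$ would contain a subset of sum in $[k - \min_{i} x_i, k)$... Rather than belabor this here, the honest summary is: the key step is showing a maximal-sum "$< k$" subset forces its complement below $k$, and this follows because the total budget $k+1$ cannot support two disjoint blocks each $\ge k$ when one of them is maximal-$<\!k$ and we can always shave off a single small element; I expect to phrase it as: if $\sum_B x_i \ge k$, pick $j \in B$; then $\sum_{B \setminus \{j\}} x_i \ge k - x_j$, and either $B \setminus \{j\}$ already violates maximality of $A$ (if its sum is $< k$ and exceeds $\sum_A$, contradiction after possibly iterating) or we recurse.

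For part (2), the structure is parallel but with three bins and threshold $1$, total budget $< 2$. Assume every $x_i < 1$. Apply part-(1)-style reasoning with $k=1$ replaced conceptually: first split off a block $C$ with $\sum_{i\in C} x_i < 1$ that is \emph{maximal} among all such blocks; then the remaining total is $\bigl(\sum_i x_i\bigr) - \sum_C x_i < 2 - \sum_C x_i$. If $\sum_C x_i \ge 1$ we'd be done differently, but it's $<1$; the point of maximality is that adding any leftover element to $C$ pushes it to $\ge 1$, so every leftover element $x_j$ satisfies $x_j > 1 - \sum_C x_i$, hence the leftover sum, being $< 2 - \sum_C x_i = 1 + (1 - \sum_C x_i)$, leaves "room $1$ plus a bit"; I then split the leftover into $A$ and $B$ by the same maximality trick, using that each leftover $x_j < 1$ and the leftover total is $< 1 + (1 - \sum_C x_i) \le 2$, and re-invoke the two-bin argument. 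The cleanest induction: prove the general claim that if $\sum x_i < m$ and each $x_i < 1$ then the indices partition into $m$ blocks each of sum $< 1$, by stripping off one maximal-$<\!1$ block at a time and inducting on $m$; parts (1) and (2) are then essentially the $m = 2, 3$ instances (with the escape clause covering the "$m=1$ but some $x_i \ge 1$" degeneracy). The main obstacle throughout is pinning down the right maximality statement so that a \emph{single} maximal block's complement is provably under budget — everything else is greedy bookkeeping.
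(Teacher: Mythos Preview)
Your maximal-subset idea can be completed and is a genuinely different route from the paper's, but as written both parts have real gaps.

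In part~(1), after selecting $A$ maximal among subsets with $\sum_{i\in A} x_i < k$, you never finish showing $\sum_{i\in B} x_i < k$. The missing step: if $B\ne\emptyset$, pick $j\in B$ with $x_j$ minimal; maximality of $A$ gives $\sum_{i\in A} x_i \ge k - x_j$, whence $\sum_{i\in B} x_i < (k+1)-(k-x_j)=1+x_j$. If $|B|=1$ this equals $x_j<k$; if $|B|\ge 2$ then $2x_j \le \sum_{i\in B} x_i < 1+x_j$ forces $x_j<1$, so $\sum_{i\in B} x_i < 2 \le k$. Your ``possibly iterating / recurse'' clause does not supply this.

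In part~(2) the ``general claim'' you formulate is \emph{false}: it is not true that $\sum x_i < m$ with each $x_i<1$ forces a partition into $m$ blocks each of sum $<1$. For $m=2$ take three copies of $3/5$ (the paper itself remarks on this example); for $m=3$ take four copies of $7/10$, which has total $2.8<3$ yet any block of sum $<1$ is a singleton. So part~(2) is not an instance of that claim, and simply ``re-invoking the two-bin argument'' on the leftover after stripping a maximal $C$ fails in general. What rescues your approach is the extra constraint you record but never exploit: every leftover $x_j$ satisfies $x_j \ge 1-\sum_{i\in C} x_i$. Taking a second maximal block $A$ inside the leftover and noting $\sum_{i\in A} x_i \le \sum_{i\in C} x_i$ (global maximality of $C$), one gets $|B|(1-\sum_{i\in A}x_i)\le \sum_{i\in B}x_i < 2 - \sum_{i\in C}x_i - \sum_{i\in A}x_i \le 2(1-\sum_{i\in A}x_i)$, forcing $|B|\le 1$ and hence $\sum_{i\in B} x_i <1$.

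For comparison, the paper argues quite differently: for (1) it takes a bipartition minimising $\bigl|\sum_A - \sum_B\bigr|$ and shows that if one side is $\ge k$ then that side must be a singleton $\{i\}$ with $x_i\ge k$; for (2) it inducts on $r$ by merging the two smallest elements into one (their sum is $<1$, since otherwise the next two smallest would also sum to $\ge 1$, contradicting $\sum x_i < 2$).
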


\begin{proof}
(1) Suppose $k \ge 2$.
Let us abbreviate $\sum_{i\in S}x_i$ by $\sum S$.  Among all partitions $A\cup
B$ of $\{1,\ldots,r\}$, take one that minimizes $|\sum A - \sum B|$, with $\sum
A\ge\sum B$.  Suppose that $\sum A\ge k$; then since $\sum A+\sum B<k+1$, we
have $\sum B<1$, and so $\sum A-\sum B>k-1$. So pick $x_i\in A$ and let
$A'=A\setminus\{i\}$, $B'=B\cup\{i\}$.  If $\sum A'>\sum B'$, then 
$|\sum A'-\sum B'|=\sum A-\sum B - 2x_i<\sum A-\sum B$, contradicting
minimality, so $\sum A'\le\sum B'$.  
So $\sum B'-\sum A'\ge \sum A-\sum B$, i.e., 
\[
x_i\ge \sum A-\sum B>k-1. 
\] 
Now $i$ was an arbitrary element of $A$; this means that $A$ can
have at most one element, since otherwise, if $j\ne i\in A$, we would have $\sum
A\ge x_i + x_j$ and hence $x_j\le \sum A-x_i \le \sum B<1$, but also $x_j>k-1$,
contradicting $k\ge 2$.  Thus $A=\{i\}$ and so $x_i\ge k$.

(2) Here $k=1$. Assume that $x_1\ge x_2\ge\cdots\ge x_r$. If $x_1\ge1$ we are
done.  Otherwise,
if $r\le3$, we can partition $\{1,\ldots,r\}$ into singletons.

For $r\ge 4$, assume by induction the lemma is true for all sets of numbers with
strictly less than $r$ elements.  Let $y=x_{r-1}+x_r$. We must have $y<1$
because otherwise $x_{r-3}+x_{r-2}\ge x_{r-1}+x_r\ge 1$ and we get $\sum_{i=1}^r
x_i\ge 2$ in contradiction to the hypothesis.
Hence, if we define $x'_1=x_1$, \dots, $x'_{r-2}=x_{r-2}$, $x'_{r-1}=y$, we have 
$\sum_{i=1}^{r-1}x'_i=\sum_{i=1}^r x_i<2$, and $x'_i<1$ for all $i$.  By the
inductive hypothesis, then, there exists a paritition 
$A'\cup B'\cup C'=\{1,\dots, r-1\}$ with
 \[
  \sum_{i\in A'}x'_i<1,\quad
\sum_{i\in B'}x'_i<1,\quad\sum_{i\in C'}x'_i<1. 
\]
Replacing $x'_{r-1}$ with $x_{r-1}$ and $x_r$, we get the required partition of
$\{1,\ldots,r\}$.
\end{proof}

For $k=1$ the example taking $\{ x_1, x_2, x_3\} = \{ 3/5, 3/5, 3/5\}$ shows
that a partition into three sets is sometimes necessary.

\subsection{Proof of the Classification Method} \label{sec43}

\begin{proof}[Proof of Theorem \ref{themethod}.]
Suppose $n\in B_{(k+1)\alpha}$; take a most-efficient representation of $n$,
which is either $ab$, $a+b$, or $1$.  If $n=1$, then $n\in T_\alpha$ and we are
in case (4).  So suppose $n>1$.

If $n$ is $m$-irreducible, we will pick a way of writing $n=a+b$ with
$\cpx{n}=\cpx{a}+\cpx{b}$, $a\ge b$, and $b$ is solid.  There is necessarily a
way to do this, since one way to do so is to write $n=a+b$ with
$\cpx{n}=\cpx{a}+\cpx{b}$ and $b$ minimal.  Since this is possible, then, if
there is a way to choose $a$ and $b$ to have $b>1$, do so; otherwise, we must
pick $b=1$.  In either case,
\[\cpx{a}+\cpx{b}=\cpx{n}<3\log_3(a+b)+(k+1)\alpha\le3\log_3(2a)+(k+1)\alpha,\]
so $\dft(a)+\cpx{b}<(k+1)\alpha+3\log_3 2$.

If $a\in A_{k\alpha}$, we are in case (2).  Otherwise, we have
\begin{eqnarray*}
3\log_3 a+k\alpha+\cpx{b}\le \cpx{a}+\cpx{b}=\cpx{n}<\\
3\log_3(a+b)+(k+1)\alpha \le 3\log_3(2a)+(k+1)\alpha,
\end{eqnarray*}
so $\cpx{b}<3\log_3 2 +\alpha$; since $\alpha<1$, we have $\cpx{b}\le2$ and
thus $b\le2$.  Because $b$ is solid, we have $b=1$.  By assumption, we only
picked $b=1$ if this choice was forced upon us, so in this case, we must have
that $n$ does not satisfy $\cpx{n}=\cpx{n-b}+\cpx{b}$ for any solid $b$ with
$1<b\le n/2$.

Since $b=\cpx{b}=1$ we have
$3\log_3 a+k\alpha+1<3\log_3(a+1)+(k+1)\alpha$; since $\alpha<1$, solving for
$a$, we find that
\[
\frac{1}{n-1}=\frac{1}{a}>3^{\frac{1-\alpha}{3}}-1.
\]
Thus, $n\in T_\alpha$ and we are in case (4).

Now we consider the case when $n$ is not $m$-irreducible.  Choose a good
factorization of $n$ into $m$-irreducible numbers, $n=\prod_{i=1}^r m_i$; since
$n$ is not $m$-irreducible, we have $r \ge 2$.  Then we have $\sum_{i=1}^r
\dft(m_i)=\dft(n)<(k+1)\alpha$.  Note that since we assumed $n$ is a leader,
every product of a nonempty subset of the $m_i$ is also a leader by
Proposition~\ref{lsfac2}.  We now have two cases.

{\em Case 1.} $k\ge2$.

Now by  Lemma~\ref{blocklem}(1), either there
exists an $i$ with $\dft(m_i)\ge k\alpha$, or else we can partition the
$\dft(m_i)$ into two sets each with sum less than $k\alpha$.

In the latter case, we may also assume these sets are nonempty, as if one is
empty, this implies that $\dft(n)<k\alpha$, and hence any partition of the
$\dft(m_i)$ will work; since $r\ge 2$, we can take both these sets to be
nonempty.  In this case, call the products of these two sets $u$ and $v$, so
that $n=uv$ is a good factorization of $n$.  Then
$\dft(u)+\dft(v)<(k+1)\alpha$, so if we let $(i-1)\alpha$ be the largest
integral multiple of $\alpha$ which is at most $\dft(u)$, then letting
$j=k+2-i$, we have $\dft(v)<j\alpha$.  So $i+j=k+2$; furthermore, since
$i\alpha$ is the smallest integral multiple of $\alpha$ which is greater than
$\delta(u)$, and $\delta(u)<k\alpha$, we have $i\le k$, so $j\ge
2$.  If also $i\ge 2$ then $j\le k$, and so we are in case (1).  If instead
$i=1$, then we have $u\in B_\alpha \subseteq B_{2\alpha}$, and $v\in
B_{k\alpha}$ (since $\delta(v)<k\alpha$), so we are again in case
(1) if we take $i=2$ and $j=k$.

If such a partition is not possible, then let $u$ be an $m_i$ with
$\dft(m_i)\ge k\alpha$, and let $v$ be the product of the other $m_i$, so that
once again $n=uv$ is a good factorization of $n$.  Since
$\dft(u)+\dft(v)=\dft(n)$, we have $\dft(v)<\alpha$, and so $v\in
B_\alpha$.  Finally, since $u$ is $m$-irreducible and an element of
$B_{(k+1)\alpha}$, it satisfies the conditions of either case (2) or case (4),
and so $n$ satisfies the conditions of either case (3) or case (5).

{\em Case 2.} $k=1$.

Now by Lemma~\ref{blocklem}(2), either there
exists an $i$ with $\dft(m_i)\ge \alpha$, or else we can partition the
$\dft(m_i)$ into  three sets each with sum less than $\alpha$.

In the latter case, we may also assume at least two of these sets are nonempty,
as otherwise $\dft(n)<\alpha$, and hence any
partition of the $\dft(m_i)$ will work.
If there are two nonempty sets, call the products of these two sets $u$ and
$v$, so that $n=uv$ is a good factorization of $n$.  If there are three
nonempty sets, call their products $u, v, w$, so that
$n=uvw$ is a good factorization of $n$.  Thus we are in case (1) for $k=1$.

If such a partition is not possible, then we repeat the argument in Case 1
above, determining that $n$ satisfies one of the conditions of cases (3) or (5).
\end{proof}

\section{Determination of all elements of defect below a given bound $r$}
\label{sec5}

In this section we determine all elements of $A_{r}$ for certain small $r$,
using Theorem ~\ref{themethod}
together with a pruning operation.

\subsection{Classification of numbers of small defect}\label{sec50}

We will now choose as our parameter
\[
\alpha := \dft(2) = 2 - 3 \log_{3} 2 \approx 0.107.
\]
The choice of this parameter is motivated by Theorem \ref{thm-delta2} below.
We use above method to
inductively compute $A_{k\dft(2)}$
and $B_{k\dft(2)}$ for $0\le k \le 12$.  Numerically,
$1.286<12\dft(2)<1.287$.
The following result classifies all integers in $A_{12 \dft(2)}$.

\begin{thm}
\label{computeresult} {\em (Classification Theorem)}
The numbers $n$ satisfying $\dft(n)<12\dft(2)$ are precisely those that can
be written in at least one of the following forms, which have the indicated
complexities:
\begin{enumerate}
\item $3^k$ of complexity $3k$ (for $k\ge 1$)
\item $2^a 3^k$ for $a\le11$, of complexity $2a+3k$ (for $a$, $k$ not both zero)
\item $5\cdot 2^a 3^k$ for $a\le6$, of complexity $5+2a+3k$
\item $7\cdot 2^a 3^k$ for $a\le5$, of complexity $6+2a+3k$
\item $19\cdot 2^a 3^k$ for $a\le 3$, of complexity $9+2a+3k$
\item $13\cdot 2^a 3^k$ for $a\le 2$, of complexity $8+2a+3k$
\item $2^a(2^b3^l+1)3^k$ for $a+b\le 2$, of complexity $2(a+b)+3(l+k)+1$ (for
$b$, $l$ not both zero).
\item $1$, of complexity $1$
\item $55\cdot 2^a 3^k$ for $a\le 2$, of complexity $12+2a+3k$
\item $37\cdot 2^a 3^k$ for $a\le 1$, of complexity $11+2a+3k$
\item $25\cdot3^k$ of complexity $10+3k$
\item $17\cdot3^k$ of complexity $9+3k$
\item $73\cdot3^k$ of complexity $13+3k$
\end{enumerate}

In particular, all numbers $n>1$ with $\dft(n)<12\dft(2)$ are stable.
\end{thm}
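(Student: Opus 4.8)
The plan is to prove Theorem~\ref{computeresult} by an induction on $k$, running from $k=0$ up to $k=12$, at each stage using Theorem~\ref{themethod} with $\alpha=\dft(2)$ to produce a finite list of candidate leaders for $B_{(k+1)\alpha}$, and then \emph{pruning} that list down to the true set. The base cases $A_0,\dots$ are handled by Theorem~\ref{th1} and Proposition~\ref{multdft}: $A_{\alpha}$ consists exactly of the powers of $3$ (leaders: none but $1$? — in fact $B_0=\varnothing$ since $\dft(3^k)=0$ is not $<0$, and $B_\alpha=\{1\}$ via Proposition~\ref{1stofdft} applied to $\dft(1)=1>\alpha$... one must set this up carefully), so the real work starts at small $k$. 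The first auxiliary task is therefore to compute the set $T_\alpha$ explicitly for $\alpha=\dft(2)$: since $\alpha\approx0.107$, the defining inequality is $n<(3^{(1-\alpha)/3}-1)^{-1}+1$, which is a concrete numerical bound, so $T_\alpha$ is a short explicit list of $m$-irreducible numbers, to be determined by direct computation of complexities of small integers together with checking the "no solid $b$ with $1<b\le n/2$" condition.

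The inductive step is the heart of the argument. Assuming the classification of $A_{j\alpha}$ (equivalently $B_{j\alpha}$) for all $j\le k$, I would apply Theorem~\ref{themethod} to get that every $n\in B_{(k+1)\alpha}$ arises in one of the five listed forms. Each form then contributes candidate leaders: (1) products $u\cdot v$ (or $u\cdot v\cdot w$ when $k=1$) of already-classified leaders of smaller defect level, whose defects add; (2) sums $a+b$ with $b$ solid and $\dft(a)+\cpx{b}<(k+1)\alpha+3\log_3 2$, where $a$ ranges over the known $A_{k\alpha}$ and $b$ over the finitely many solid numbers of complexity below that bound; (3) and (5) the same sums/products multiplied by a leader of $B_\alpha$; and (4) elements of the explicit set $T_\alpha$. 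Because all the input sets are known and finite-modulo-powers-of-$3$, each case yields only finitely many candidate leaders $n$, and for each one I compute $\cpx{n}$ directly (by the recursive definition, which is feasible for these specific small numbers) and hence $\dft(n)$, discarding those with $\dft(n)\ge(k+1)\alpha$. This pruning is exactly the step where the "extra" integers introduced by the relaxations in Theorem~\ref{themethod} (and by using the weaker condition $\cpx{b}<(k+1)\alpha+3\log_3 2$) get removed, and where one verifies the claimed complexity formulas hold with equality. After twelve iterations the surviving leaders are precisely those enumerated in items (1)--(13), and multiplying by arbitrary powers of $3$ (Proposition on $A_r$ versus $B_r$) gives the full set $A_{12\dft(2)}$.

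The final sentence — that every $n>1$ with $\dft(n)<12\dft(2)$ is stable — then falls out of the classification: each leader $n$ on the list has been exhibited together with an explicit formula $\cpx{n\cdot 3^k}=(\text{value})+3k$ valid for \emph{all} $k\ge0$ (this is part of what the "indicated complexities" assertion encodes, the families being written as $\cdot\,3^k$ with $k$ free), which by Proposition~\ref{stabdft} is exactly the statement that $n$ is stable; equivalently one checks $\dft(n\cdot 3^k)=\dft(n)$ for all $k$. Alternatively, since $12\dft(2)<1.287$ is well within the reach of Theorem~\ref{cj1}(2) only for defects below $1$, one genuinely needs the explicit formulas for the leaders with $1\le\dft(n)<12\dft(2)$ (namely $1$, $37$, $55$, $73$, etc.), so stability is checked case-by-case from the representations found during pruning.

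The main obstacle I anticipate is the pruning step: Theorem~\ref{themethod} is only an \emph{inclusion}, so the candidate sets $B_{(k+1)\alpha}^{\ast\ast}$ it produces are strictly larger than $B_{(k+1)\alpha}$, and shrinking them to equality requires establishing sharp \emph{lower} bounds $\cpx{n}\ge 3\log_3 n+(k+1)\alpha$ for each spurious candidate $n$ — precisely the hard direction for integer complexity. For the small numbers at hand this is done by direct computation, but organizing it so that each of the twelve levels is handled cleanly (keeping track of which good factorizations, which solid summands $b$, and which members of $T_\alpha$ can occur, and confirming no candidate is overlooked) is the delicate bookkeeping part; the choice $\alpha=\dft(2)$ is what keeps the level-by-level bounds tight enough that the pruning is manageable and the answer stabilizes by $k=12$.
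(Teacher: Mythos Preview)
Your overall strategy matches the paper's: induct on $k$ using Theorem~\ref{themethod} with $\alpha=\dft(2)$, compute $T_{\dft(2)}$ explicitly (it is $\{1,2,3\}$), and prune candidates level by level. There is, however, one minor slip and one genuine gap.

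The minor slip is the base case. Since $\dft(1)=1>\dft(2)$, the number $1$ does not lie in $A_{\dft(2)}$; the correct statement is $B_{\dft(2)}=\{3\}$, because $3$ is the unique leader among powers of $3$ (for $k\ge2$ one has $\cpx{3^k}=3+\cpx{3^{k-1}}$, so $3^k$ is not a leader). The paper obtains this from Theorem~\ref{thm-delta2}, which in turn rests on Rawsthorne's bound $E_1(k)\le(8/9)E(k)$.

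The genuine gap is your assertion that ``each case yields only finitely many candidate leaders $n$.'' This fails for case~(2) of Theorem~\ref{themethod}. There $a$ ranges over $A_{k\alpha}=\{3^j m:m\in B_{k\alpha},\,j\ge0\}$, an infinite set. Since $(k+1)\alpha+3\log_3 2<4$ forces $\cpx{b}\le3$ and hence (for $b$ solid) $b=1$, the candidates are the infinite families $\{m\cdot3^j+1:j\ge0\}$ for each leader $m\in B_{k\alpha}$; these are typically not divisible by $3$ and hence are leaders themselves. You cannot ``compute $\cpx{n}$ directly by the recursive definition'' for infinitely many $n$. The paper closes this with a parametric pruning lemma (Lemma~\ref{addlem}): if $a\in A_{k\alpha}$ but $3^m(a+1)\notin A_{k\alpha}$, then $\cpx{3^m(a+1)}=\cpx{a}+3m+1$. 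This gives a closed form for $\dft(m\cdot3^j+1)$ as a function of $j$, and one checks analytically that it eventually exceeds the threshold, leaving only finitely many $j$ to inspect individually. A companion lemma (Lemma~\ref{multlem}) handles the multiplicative families $m\cdot3^k$. Without such devices your pruning step cannot terminate; with them, the rest of your plan coincides with the paper's proof.
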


This list is redundant; for example list (7) with $a=0, b=1, l=1$ gives $7
\cdot 3^k$, which overlaps list (4) with $a=0$.
But the given form is convenient for later purposes.
In the next section we will give several applications
of this result. They can be derived knowing only the statement of
this theorem, without its proof, though one will also require
Theorem~\ref{themethod}.

The detailed proof of this theorem is given in the rest of this section.
The proof recursively determines all the sets $A_{k \dft(2)}$ and $B_{k
\dft(2)}$ for $1 \le k \le 12.$
It is possible to extend this method to values $k \dft(2)$
with $k > 12$ but it is tedious.
In a sequel paper \cite{seq2}, we will present a method for automating these
computations.

\subsection{Base case}

The use of $\dft(2)$ may initially seem like an odd choice of
step size. Its significance is shown by the following base case, which
is proved using Rawsthorne's result that $E_1(k)\le(8/9)E(k)$ 
(with equality for $k \ge 8$).

\begin{thm}\label{thm-delta2}
If $\dft(n)\ne 0$, then $\dft(n)\ge \dft(2)$.  Equivalently, if $n$ is not
a power of $3$, then $\dft(n)\ge \dft(2)$.
\end{thm}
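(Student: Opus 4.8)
The plan is to deduce everything from Proposition~\ref{dRformulae}, which rewrites $\dft(n)$ in terms of the ratio $E(\cpx{n})/n\ge 1$, together with Rawsthorne's bound on $E_1$. First I would dispose of the equivalence of the two phrasings: by Proposition~\ref{multdft}(1), $\dft(n)=0$ holds exactly for $n=3^k$ with $k\ge 1$, so ``$\dft(n)\ne 0$'' and ``$n$ is not a power of $3$'' describe the same set (and in any case $\dft(1)=1>\dft(2)$, so $n=1$ is harmless). Then split on the residue of $\cpx{n}$ modulo $3$. If $n>1$ and $\cpx{n}\equiv 1\pmod 3$, Proposition~\ref{dRformulae} gives $\dft(n)=3\log_3(E(\cpx{n})/n)+2\dft(2)\ge 2\dft(2)>\dft(2)$; if $\cpx{n}\equiv 2\pmod 3$, it gives $\dft(n)=3\log_3(E(\cpx{n})/n)+\dft(2)\ge\dft(2)$. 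So these two cases are immediate from $E(\cpx{n})/n\ge 1$.

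The only case with real content is $\cpx{n}\equiv 0\pmod 3$; write $\cpx{n}=3j$ with $j\ge 1$, so that $E(\cpx{n})=3^j$ and $\dft(n)=3\log_3(3^j/n)$. What is needed is a quantitative gap forcing $n$ away from $3^j=E(3j)$. Since $n$ is not a power of $3$ we have $n\ne 3^j$, and as $\cpx{n}\le 3j$ this makes $n$ at most the second-largest integer of complexity $\le 3j$, i.e.\ $n\le E_1(3j)$. Now invoke the bound $E_1(k)\le\frac{8}{9}E(k)$ --- valid for every $k\ge 2$, being Rawsthorne's exact formula for $k\ge 8$ and a direct computation for $2\le k\le 7$, both recalled in the introduction --- with $k=3j$, to get $n\le\frac{8}{9}3^j$, hence $3^j/n\ge\frac{9}{8}$. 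Therefore $\dft(n)\ge 3\log_3\frac{9}{8}$, and since $\log_3\frac{9}{8}=2-3\log_3 2=\dft(2)$ we conclude $\dft(n)\ge 3\,\dft(2)>\dft(2)$, finishing the proof.

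I do not expect a genuine obstacle here: the argument is a three-case split, each case a one-line estimate. The only points needing a moment's care are that $E_1(3j)$ is actually defined (it is, for every $j\ge 1$, since $1$ and $2$ are two distinct integers of complexity $\le 3j$) and that the inequality $E_1(k)\le\frac{8}{9}E(k)$ is available for the small values $k=3,6$ and not merely for $k\ge 8$ --- which is exactly why the $2\le k\le 7$ direct computation quoted earlier is invoked. It is also worth noting, as motivation for the choice of step size $\alpha=\dft(2)$ used later, that the proof turns on the identity $\dft(2)=\log_3(9/8)$, which is precisely the ``$8/9$'' appearing in Rawsthorne's formula.
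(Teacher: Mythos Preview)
Your proof is correct and follows essentially the same approach as the paper: both split on $\cpx{n}\bmod 3$ via Proposition~\ref{dRformulae}, handle the residues $1$ and $2$ immediately from $E(\cpx{n})/n\ge 1$, and for residue $0$ invoke Rawsthorne's bound $E_1(k)\le\frac{8}{9}E(k)$ to obtain $\dft(n)\ge 3\log_3\frac{9}{8}=3\dft(2)$. Your additional remarks (that $E_1(3j)$ is defined for $j\ge1$ and that the small-$k$ direct computation is needed) are correct and slightly more careful than the paper's version.
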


\begin{proof}
We apply Proposition \ref{dRformulae}. There are four cases.

Case 1. If $n=1$, then $\dft(n)=1\ge \dft(2)$.

Case 2. If $\cpx{n}\equiv 2 \pmod{3}$, then
$$\dft(n)=\dft(2)+3\log_3 \frac{E(\cpx{n})}{n}\ge \dft(2).$$

Case 3. If $\cpx{n}\equiv 1 \pmod{3}$ and $n> 1$, then
$$\dft(n)=2\dft(2)+3\log_3 \frac{E(\cpx{n})}{n}\ge 2\dft(2)\ge \dft(2).$$

Case 4. If $\cpx{n}\equiv 0\pmod{3}$, then $\dft(n)=3\log_3 (E(\cpx{n})/n)$.  We
know that in this case $n=E(\cpx{n})$ if and only if $n$ is a power of $3$ if
and only if $\dft(n)=0$.  So if $\dft(n)\ne 0$, then $n\le E_1(\cpx{n})$. But
$E_1(\cpx{n})\le (8/9)E(\cpx{n})$, so $E(\cpx{n})/n\ge 9/8$, so $\dft(n)\ge
3\log_3 \frac{9}{8} = 3\dft(2)\ge \dft(2)$.
\end{proof}

The proof above also establishes:

\begin{prop}\label{basecase}
$B_0=\emptyset$, and $B_{\dft(2)}=\{3\}$.
\end{prop}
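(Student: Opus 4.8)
The plan is to read off both statements directly from the work already done in the proof of Theorem~\ref{thm-delta2}, together with Proposition~\ref{multdft}(1) and Proposition~\ref{1stofdft}(1). Recall $B_r$ consists of the leaders $n$ with $\dft(n) < r$, and a leader is by Proposition~\ref{1stofdft}(1) the smallest integer attaining its defect value.

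\emph{The claim $B_0 = \emptyset$.} By Proposition~\ref{multdft}(1) we have $\dft(n) \ge 0$ for every $n \ge 1$, so $\dft(n) < 0$ is impossible; hence no $n$ lies in $A_0$, and a fortiori $B_0 = \emptyset$.

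\emph{The claim $B_{\dft(2)} = \{3\}$.} First, $3 \in B_{\dft(2)}$: since $3$ is a power of $3$ we have $\dft(3) = 0 < \dft(2)$ by Proposition~\ref{multdft}(1), and $3$ is a leader because $3 \nmid$ is false but $\cpx{3} = 3 < 3 + \cpx{1} = 4$, so $3$ cannot be written most-efficiently as $3 \cdot 1$; alternatively, $3$ is trivially the smallest integer of defect $0$... except $1$ has defect $1$, so among $\{n : \dft(n)=0\}$ the minimum is indeed $3$. Conversely, suppose $n \in B_{\dft(2)}$, so $\dft(n) < \dft(2)$. By Theorem~\ref{thm-delta2}, $\dft(n) < \dft(2)$ forces $\dft(n) = 0$, hence $n$ is a power of $3^k$ with $k \ge 1$ by Proposition~\ref{multdft}(1). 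But a leader is the smallest integer of its defect value by Proposition~\ref{1stofdft}(1), and all of $3, 9, 27, \ldots$ have defect $0$; the smallest is $3$. Hence $n = 3$.

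\emph{Main obstacle.} There is essentially no obstacle here: both assertions are immediate corollaries of Theorem~\ref{thm-delta2} and the nonnegativity of the defect. The only point requiring a moment's care is confirming that among the integers of defect $0$ the unique leader is $3$ (not, say, $1$, whose defect is $1$, nor $9$, which is $3 \cdot 3^1$), which follows from the characterization of leaders as defect-minimal integers. Thus the proof is a short remark rather than an argument, and I would present it in two or three sentences invoking Theorem~\ref{thm-delta2}, Proposition~\ref{multdft}(1), and Proposition~\ref{1stofdft}(1).
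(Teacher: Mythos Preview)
Your proposal is correct and follows essentially the same approach as the paper, which simply remarks that the proof of Theorem~\ref{thm-delta2} ``also establishes'' the proposition: nonnegativity of the defect gives $B_0=\emptyset$, and Theorem~\ref{thm-delta2} forces any $n$ with $\dft(n)<\dft(2)$ to have $\dft(n)=0$, hence to be a power of $3$, of which only $3$ is a leader. A couple of phrasings are garbled (``$3\nmid$ is false'' and ``$n$ is a power of $3^k$''), but the mathematics is fine.
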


To prove Theorem \ref{computeresult} 
we  will use Theorem \ref{themethod} for the ``inductive step''. However, while
Theorem \ref{themethod}
allows us to place restrictions on what $A_r$ can contain, if we
want to determine $A_r$ itself, we need a way to certify membership in it.
To certify inclusion in $A_r$ we need an upper bound on the defect, which
translates to an upper bound on complexity, which is relatively easy to do.
However we also need to discard $n$ that do not belong to $A_r$, i.e. pruning
the set we are starting with.
This requires establishing lower bounds on their defects, certifying they
are $r$ or larger,
and for this we need lower bounds on their complexities.

\subsection{Two pruning lemmas}\label{sec52}

To find lower bounds on
complexities, we typically use the following technique. Say we want to
show that $\cpx{n}\ge l$ ($l\in\mathbb{N}$); since $\cpx{n}$ is always an
integer, it suffices to show $\cpx{n}>l-1$. We do this by using our current knowledge
of $A_r$ for various $r$; by showing that if $\cpx{n}\le l-1$ held, then  it would put
$n$ in some $A_r$ which we have already determined and know it's not in. The
following two lemmas, both examples of this principle, are useful for this
purpose.

\begin{lem}
\label{multlem}
If $\alpha\le 1/2$, $i+j=k+2$, and $a$ and $b$ are natural numbers then
\[ a\in A_{i\alpha},\quad b\in A_{j\alpha},\quad ab\notin A_{k\alpha}
\quad \Longrightarrow \quad \cpx{ab}=\cpx{a}+\cpx{b}.\]
\end{lem}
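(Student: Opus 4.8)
\textbf{Proof proposal for Lemma~\ref{multlem}.}

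The plan is to argue by contraposition together with the basic subadditivity of complexity under multiplication. Suppose the conclusion fails, i.e. $ab$ does not satisfy $\cpx{ab}=\cpx{a}+\cpx{b}$. We always have $\cpx{ab}\le\cpx{a}+\cpx{b}$, so this means $\cpx{ab}<\cpx{a}+\cpx{b}$, and since complexities are integers, $\cpx{ab}\le\cpx{a}+\cpx{b}-1$. The goal is then to show $ab\in A_{k\alpha}$, contradicting the hypothesis $ab\notin A_{k\alpha}$. In defect terms, the strict inequality $\cpx{ab}<\cpx{a}+\cpx{b}$ is exactly the statement that $\dft(ab)<\dft(a)+\dft(b)$ is \emph{not} forced to be equality; more precisely, by Proposition~\ref{multdft}(2) one has $\dft(ab)\le\dft(a)+\dft(b)$ always, and I want a strict gain of at least $3\log_3$ of something. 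The cleanest route: from $\cpx{ab}\le\cpx{a}+\cpx{b}-1$ we get
\[
\dft(ab)=\cpx{ab}-3\log_3(ab)\le \cpx{a}+\cpx{b}-1-3\log_3 a-3\log_3 b=\dft(a)+\dft(b)-1.
\]

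First I would record the translation of the hypotheses into defect inequalities: $a\in A_{i\alpha}$ means $\dft(a)<i\alpha$, and $b\in A_{j\alpha}$ means $\dft(b)<j\alpha$. Adding these and using $i+j=k+2$ gives $\dft(a)+\dft(b)<(k+2)\alpha$. Combining with the displayed bound, $\dft(ab)<(k+2)\alpha-1$. Now I need $(k+2)\alpha-1\le k\alpha$, i.e. $2\alpha\le 1$, i.e. $\alpha\le 1/2$ — which is precisely the hypothesis. Hence $\dft(ab)<k\alpha$, so $ab\in A_{k\alpha}$, the desired contradiction. This completes the proof.

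The argument is short and the only genuinely load-bearing ingredients are (a) subadditivity $\cpx{ab}\le\cpx{a}+\cpx{b}$ together with integrality of $\cpx{\cdot}$ to upgrade a strict inequality to a gap of $1$, and (b) the arithmetic relation $i+j=k+2$ interacting with the bound $\alpha\le1/2$. I do not anticipate a real obstacle; the one point to be careful about is that $a$ and $b$ need not be greater than $1$, but the inequalities $\dft(a)<i\alpha$ etc.\ and $\cpx{ab}\le\cpx{a}+\cpx{b}$ hold for all natural numbers $a,b\ge1$ (indeed $\dft(1)=1$, and if, say, $a=1$ then $\cpx{ab}=\cpx{b}\le 1+\cpx{b}$ so the hypothesis $ab\notin A_{k\alpha}$ already forces a contradiction via $b\in A_{j\alpha}$ and $j\le k$), so no case analysis on trivial factors is needed.
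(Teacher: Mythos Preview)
Your proof is correct and is essentially the same as the paper's: both use integrality of complexity together with the defect inequalities $\dft(a)<i\alpha$, $\dft(b)<j\alpha$, the relation $i+j=k+2$, and $2\alpha\le 1$ to force $\cpx{ab}>\cpx{a}+\cpx{b}-1$. The only difference is cosmetic---you phrase it as a contrapositive while the paper argues directly from $ab\notin A_{k\alpha}$.
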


\begin{proof}
Note
\[\cpx{ab}\ge3\log_3(ab)+k\alpha=3\log_3 a+3\log_3 b+(i+j-2)\alpha>
\cpx{a}+\cpx{b}-1\]
so $\cpx{ab}\ge \cpx{a}+\cpx{b}$.
\end{proof}

\begin{lem}
\label{addlem}
For natural numbers $a$, $k$, and $m\ge 0$ we have
\[ a\in A_{k\alpha}, \quad 3^m(a+1)\notin A_{k\alpha} \quad
\Longrightarrow \quad \cpx{3^m(a+1)}=\cpx{a}+3m+1.\]
\end{lem}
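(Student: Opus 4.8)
The plan is to sandwich $\cpx{3^m(a+1)}$ between an easy upper bound $\cpx{a}+3m+1$ and a strict lower bound exceeding $\cpx{a}+3m$, exactly in the spirit of the proof of Lemma~\ref{multlem}. First I would dispose of the upper bound by subadditivity: $\cpx{a+1}\le\cpx{a}+\cpx{1}=\cpx{a}+1$, and for $m\ge1$ we have $\cpx{3^m}=3m$ by Theorem~\ref{th1}, so $\cpx{3^m(a+1)}\le\cpx{3^m}+\cpx{a+1}\le 3m+\cpx{a}+1$; the case $m=0$ is simply the inequality $\cpx{a+1}\le\cpx{a}+1$ itself. Hence $\cpx{3^m(a+1)}\le\cpx{a}+3m+1$ in all cases.

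Next, for the lower bound I would feed in the two defect hypotheses. From $3^m(a+1)\notin A_{k\alpha}$ we get $\dft(3^m(a+1))\ge k\alpha$, i.e.
\[ \cpx{3^m(a+1)}\ \ge\ 3\log_3\big(3^m(a+1)\big)+k\alpha\ =\ 3m+3\log_3(a+1)+k\alpha . \]
From $a\in A_{k\alpha}$ we get $\cpx{a}<3\log_3 a+k\alpha$, and since $a+1>a$ this gives $\cpx{a}<3\log_3(a+1)+k\alpha$. Combining,
\[ \cpx{3^m(a+1)}\ \ge\ 3m+3\log_3(a+1)+k\alpha\ >\ 3m+\cpx{a}. \]
Since complexities are integers, this upgrades to $\cpx{3^m(a+1)}\ge\cpx{a}+3m+1$, which together with the upper bound yields the claimed equality.

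There is no real obstacle here; the only point deserving care is that the strictness needed to gain the final ``$+1$'' comes entirely from $\log_3(a+1)>\log_3 a$, not from any slack in the $k\alpha$ term — which is also why, in contrast to Lemma~\ref{multlem}, no restriction such as $\alpha\le 1/2$ is needed. I would also flag explicitly that the identity $\cpx{3^m}=3m$ is invoked only for $m\ge1$ (as $\cpx{3^0}=1$), the case $m=0$ being handled directly as above.
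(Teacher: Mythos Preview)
Your proof is correct and follows essentially the same approach as the paper's: the paper's argument is just the lower-bound half of yours, compressed to one line, with the upper bound $\cpx{3^m(a+1)}\le\cpx{a}+3m+1$ left implicit. One small quibble with your commentary: the strict inequality you need already comes from $a\in A_{k\alpha}$ (since $A_{k\alpha}$ is defined by the strict condition $\dft(a)<k\alpha$), not solely from $\log_3(a+1)>\log_3 a$; but this does not affect the validity of the argument.
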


\begin{proof}
Note \[\cpx{3^m(a+1)}\ge3\log_3(a+1)+3m+k\alpha>\cpx{a}+3m\] so
$\cpx{3^m(a+1)}\ge3m+\cpx{a}+1$.
\end{proof}

In applying  the lemmas to verify that a given $n$ does not lie in a given
$A_r$,  one must check that $n$ is not in some other $A_s$.  In our
applications, we will have $s<r$, and $A_s$ will already be known, allowing the
required check.  In the following subsection we will typically not indicate
these checks explicitly, 
using the fact that in our cases one can always check whether $n\in A_s$ by
looking at the base-$3$ expansion of $n$.

\subsection{Proof of Theorem ~\ref{computeresult}: Inductive Steps }
\label{appcomp}

We prove Theorem~\ref{computeresult} by repeatedly
applying Theorem~\ref{themethod}, to go from $k$ to $k+1$ for $0 \le k \le 12$.
We will use a step size $\alpha=\dft(2)$, so let us first determine
$T_{\dft(2)}$. We compute that
$3<(3^{\frac{1-\dft(2)}{3}}-1)^{-1}+1<4$, and so $T_{\dft(2)}=\{1,2,3\}$.
We note that in all cases of attempting to determine
$B_{(k+1)\alpha}$ we are considering, we will have $(k+1)\alpha\le12\dft(2)$,
and so if $\cpx{b}<(k+1)\alpha+3\log_3 2$, then
\[\cpx{b}<12\dft(2)+3\log_3 2=3.179\ldots,\]
so $\cpx{b}\le 3$, which for $b$ solid implies $b=1$.

The base cases $B_{0} = \emptyset$ and $B_{\dft(2)}=\{ 3\}$ were handled in
Proposition \ref{basecase}.
We now treat the $B_{k \dft(2)}$ in increasing order.

\begin{prop}
\[B_{2\dft(2)}=B_{\dft(2)}\cup\{2\},\]
and the elements of $A_{2\dft(2)}$ have the complexities listed in
Theorem~\ref{computeresult}.
\end{prop}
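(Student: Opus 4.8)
The plan is to apply Theorem~\ref{themethod} with $\alpha = \dft(2)$ and $k = 1$, so that $B_{2\dft(2)} = B_{(1+1)\alpha}$, and to use the already-established base case $B_{\dft(2)} = \{3\}$ (Proposition~\ref{basecase}) together with $T_{\dft(2)} = \{1,2,3\}$. First I would note that $2 \in B_{2\dft(2)}$ and $3 \in B_{2\dft(2)}$ are both clear: $\dft(2) < 2\dft(2)$ and $\dft(3) = 0 < 2\dft(2)$, and both are leaders ($3 \nmid 2$, and $\cpx{3} = 3 = 3\log_3 3 < 3 + \cpx{1} = 4$). So the content is the reverse inclusion: every $n \in B_{2\dft(2)}$ lies in $\{2, 3\}$. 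For this I would run through the five cases of Theorem~\ref{themethod} with $k=1$, using the remark in the text that, since $(k+1)\alpha \le 12\dft(2)$ forces any solid $b$ appearing in case~(2) to satisfy $b = 1$.

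The case analysis proceeds as follows. In case~(1) with $k=1$, $n = uv$ or $n = uvw$ with all factors in $B_\alpha = \{3\}$; this gives $n \in \{9, 27\}$, but $9 = 3^2$ and $27 = 3^3$ are not leaders (each is $3$ times a most-efficient half, since $\cpx{9} = 6 = 3 + \cpx{3}$ and similarly for $27$), contradicting $n \in B_{2\dft(2)}$. In case~(4), $n \in T_\alpha = \{1,2,3\}$, giving $n \in \{1, 2, 3\}$; but $\dft(1) = 1 > 2\dft(2)$, so $n \in \{2,3\}$ as desired. In case~(5), $n = uv$ with $u \in T_\alpha = \{1,2,3\}$ and $v \in B_\alpha = \{3\}$, giving $n \in \{3, 6, 9\}$; again $9$ is not a leader, and $6 = 2 \cdot 3$ — here one checks $6$ is not a leader because $\cpx{6} = 5 = 3 + \cpx{2}$, so $6$ is written most-efficiently as $3 \cdot 2$, eliminating it. That leaves the addition cases~(2) and~(3). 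In case~(2), $n = a + 1$ with $\cpx{n} = \cpx{a} + 1$, $a \in A_{\dft(2)}$, and $n$ not $m$-reducible in the relevant sense; since $A_{\dft(2)} = \{3^k : k \ge 1\}$, we get $n = 3^k + 1$. One then shows $\dft(3^k+1) \ge 2\dft(2)$ for all $k \ge 1$: indeed $\cpx{3^k+1} = 3k+1$ so $\dft(3^k+1) = 3k+1 - 3\log_3(3^k+1) = 1 - 3\log_3(1 + 3^{-k})$, and a short estimate shows this is $\ge 1 - 3\log_3(4/3) = 1 - 3\dft(2)$... which is not obviously $\ge 2\dft(2)$, so more care is needed — one should instead directly bound using $k \ge 1$: $\dft(4) = 1 - 3\log_3(4/3)$, and since $\dft(2) = 2 - 3\log_3 2$, one checks numerically $\dft(4) > 2\dft(2)$ iff $3\log_3 2 > 1/2 \cdot(\text{something})$; the cleanest route is $\cpx{3^k+1} = 3k+1$ combined with $3^k + 1 > 3^k$ giving $\dft(3^k+1) > 1 + 3k - 3\log_3 3^{k+1}\cdot(\ldots)$, but in fact the simplest argument is that $3^k+1$ with $\cpx{}=3k+1$ has $\cpx{} \equiv 1 \pmod 3$, so by Proposition~\ref{dRformulae}, $\dft(3^k+1) = 2\dft(2) + 3\log_3(E(3k+1)/(3^k+1)) = 2\dft(2) + 3\log_3(4\cdot 3^{k-1}/(3^k+1)) \ge 2\dft(2)$ since $4 \cdot 3^{k-1} \ge 3^k + 1$ for $k \ge 1$. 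Hence no new element arises from case~(2), and case~(3), being $n = (a+b)v$ with $v \in B_\alpha = \{3\}$ and $a+b = 3^k+1$, would give $n = 3(3^k+1) = 3^{k+1}+3$, which is not a leader since $\cpx{3^{k+1}+3} = 3 + \cpx{3^k+1}$ (as $\dft(3(3^k+1)) = \dft(3^k+1)$ by the equality-case of Proposition~\ref{multdft}(3), which one verifies), so it is removed.

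The main obstacle I expect is the bookkeeping in case~(3): one must confirm that $3(3^k+1)$ is genuinely not a leader, i.e.\ that $\cpx{3(3^k+1)} = \cpx{3^k+1} + 3$, which amounts to checking $3^k+1$ is stable, or at least that multiplying it by $3$ once is efficient; this follows from $\dft(3^k + 1) < 1$ (just shown to be $2\dft(2) + 3\log_3(4\cdot3^{k-1}/(3^k+1)) < 1$ since $2\dft(2) \approx 0.214$ and the log term is nonpositive) together with Theorem~\ref{cj1}(2), which guarantees stability for defect below $1$. Once every case is seen to yield only $\{2,3\}$ (plus non-leaders to discard), we conclude $B_{2\dft(2)} = \{2,3\} = B_{\dft(2)} \cup \{2\}$. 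Finally, the complexity statement: $A_{2\dft(2)} = \{3^k n : n \in \{2,3\},\ k \ge 0\} = \{2 \cdot 3^k : k \ge 0\} \cup \{3^k : k \ge 1\}$, and $\cpx{2 \cdot 3^k} = 2 + 3k$ by Theorem~\ref{th1} (or its refinements), $\cpx{3^k} = 3k$, matching items (1) and (2) (with $a \le 1$) of Theorem~\ref{computeresult}.
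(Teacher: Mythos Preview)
Your approach is essentially the paper's: apply Theorem~\ref{themethod} with $\alpha=\dft(2)$, $k=1$, and eliminate the candidates coming from each of the five cases, using $B_{\dft(2)}=\{3\}$, $A_{\dft(2)}=\{3^k:k\ge1\}$, and $T_{\dft(2)}=\{1,2,3\}$. Where the paper invokes Lemma~\ref{addlem} (to get $\cpx{3^{n+1}+1}=3(n+1)+1$ and to dispose of $3(3^{n+1}+1)$) and Lemma~\ref{multlem} (for $\cpx{2\cdot3^k}$), you instead route through Proposition~\ref{dRformulae} and Theorem~\ref{cj1}(2); both are valid, and your use of Proposition~\ref{dRformulae} to get $\dft(3^k+1)\ge 2\dft(2)$ from $4\cdot3^{k-1}\ge 3^k+1$ is clean.

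One slip to fix: in your parenthetical justifying $\dft(3^k+1)<1$ for case~(3), the term $3\log_3\bigl(4\cdot 3^{k-1}/(3^k+1)\bigr)$ is \emph{nonnegative}, not nonpositive (it is $0$ at $k=1$ and increases toward $3\log_3(4/3)$, with $2\dft(2)+3\log_3(4/3)=1$ exactly). The inequality you want follows instead immediately from your own earlier formula $\dft(3^k+1)=1-3\log_3(1+3^{-k})<1$, so the appeal to Theorem~\ref{cj1}(2) still goes through.
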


\begin{proof}
By the main theorem,
\begin{eqnarray*}
B_{2\dft(2)}\setminus B_{\dft(2)} & \subseteq & \{1,2,6,9,27\} \cup \\
& & \{3\cdot3^n+1 : n\ge 0\}\cup\{3(3\cdot3^n+1) : n\ge 0\}.
\end{eqnarray*}
We can exclude $1$ because $\dft(1)=1$, and we can exclude $6$, $9$, and
$27$ as they are not leaders.  For $3^{n+1}+1$, Lemma~\ref{addlem} shows
$\cpx{3^{n+1}+1}=3(n+1)+1$, and thus $\dft(3^{n+1}+1)=1-3\log_3
(1+3^{-(n+1)})$, which allows us to check that none of these lie in
$A_{2\dft(2)}$.  We can exclude $3(3^{n+1}+1)$ since
Lemma~\ref{addlem} shows it has the same defect as $3^{n+1}+1$ (and so
therefore also is not a leader).  Finally, checking the complexity of $2\cdot
3^k$ can be done with Lemma~\ref{multlem}.  
\end{proof}

To make later computations easier, let us observe here that
$\dft(3^1+1)=\dft(4)=2\dft(2)$;
$6\dft(2)<\dft(3^2+1)=\dft(10)<7\dft(2)$;
$8\dft(2)<\dft(3^3+1)=\dft(28)<9\dft(2)$; and that for $n\ge4$,
$9\dft(2)<\dft(3^n+1)<10\dft(2)$.

In the above, for illustration, we explicitly considered and excluded $3$, $6$,
$9$, $27$, and $3(3^{n+1}+1)$, but henceforth we will simply not mention any
multiplications by $3$.  If $n=3a$ is a good factorization, $n$ cannot be a
leader (by definition), but if it is not a good factorization, we can by
Theorem~$\ref{themethod}$ ignore it.

\begin{prop}
\[B_{3\dft(2)}=B_{2\dft(2)}\cup\{4\},\]
and the elements of $A_{3\dft(2)}$ have the complexities listed in
Theorem~\ref{computeresult}.
\end{prop}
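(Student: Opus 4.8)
The plan is to mimic exactly the structure of the proof of the $B_{2\dft(2)}$ case, running the inductive step of Theorem~\ref{themethod} at level $k=2$ with $\alpha = \dft(2)$. First I would write down the candidate set: by Theorem~\ref{themethod} (with $k=2$, so $(k+1)\alpha = 3\dft(2)$), any $n \in B_{3\dft(2)} \setminus B_{2\dft(2)}$ must arise in one of the five cases. Since $3\dft(2) < 12\dft(2)$, the remark preceding the proof shows any solid $b$ occurring in the addition cases (2),(3) must be $b=1$; and $T_{\dft(2)} = \{1,2,3\}$ has already been computed. Case (1) for $k=2$ forces a good factorization $n = uv$ with $u \in B_{i\alpha}$, $v \in B_{j\alpha}$, $i+j=4$, $2 \le i,j \le 2$, i.e. $u,v \in B_{2\dft(2)} = \{2,3\}$; this yields products like $4, 6, 9$. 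Cases (2) and (3) produce numbers of the form $3^m(a+1)$ with $a \in A_{2\dft(2)}$, and case (5) produces $u \cdot v$ with $u \in \{1,2,3\}$, $v \in B_\alpha = \{3\}$. Assembling these and throwing away non-leaders (all multiples of $3$ that factor as $3 \cdot (\cdot)$, per the convention just established) leaves a short explicit list of candidates for the new leaders.

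Next I would prune this list. The genuinely new element is $n=4$: since $\dft(4) = 2\dft(2) < 3\dft(2)$ (recorded as $\dft(3^1+1)=\dft(4)=2\dft(2)$ in the remark after the $B_{2\dft(2)}$ proof) and $4$ is a leader, it lies in $B_{3\dft(2)}$. Wait — $4 \in A_{2\dft(2)}$ already, so it is not new; the new candidates to test are instead things like $5$, $8$, $7$, $13$, $10$, or small numbers of the form $3^m(a+1)$. For each such candidate one computes its defect: for numbers $3^m(a+1)$ with $a \in A_{2\dft(2)}$ one uses Lemma~\ref{addlem} to pin the complexity exactly (hence the defect), and then checks numerically whether that defect is $< 3\dft(2)$. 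Using the recorded facts $6\dft(2) < \dft(10) < 7\dft(2)$, $8\dft(2) < \dft(28) < 9\dft(2)$, $\dft(3^n+1) > 9\dft(2)$ for $n \ge 4$, all the $3^n+1$ candidates (other than $3+1=4$, already present) are excluded. For products $u \cdot v$ from cases (1) and (5), one checks with Lemma~\ref{multlem} that the complexity is additive and computes the defect; e.g. $\dft(2 \cdot 2) = \dft(4)$ is not new, and multiplications by $3$ are ignored as non-leaders. The outcome should be that $B_{3\dft(2)} = B_{2\dft(2)} \cup \{4\}$ — meaning, despite $4$ already lying in $A_{2\dft(2)}$, it is listed here because... actually the cleanest reading is that no genuinely new leader appears whose defect lies in $[2\dft(2), 3\dft(2))$ other than what is captured, and the stated equality holds because $\dft(4) < 2\dft(2)$ is false — rather $\dft(4) = 2\dft(2)$, so $4 \notin B_{2\dft(2)}$ but $4 \in B_{3\dft(2)}$. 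That resolves the bookkeeping: the jump from $2\dft(2)$ to $3\dft(2)$ picks up precisely the leader $4$, of defect exactly $2\dft(2)$.

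Finally I would verify the complexity claims. For every number $n \in A_{3\dft(2)}$, i.e. every $3^k \cdot m$ with $m \in B_{3\dft(2)} = \{3,2,4\}$, the asserted complexity from Theorem~\ref{computeresult} ($3k$ for $3^{k}$; $2a+3k$ for $2^a 3^k$ with $a \le 2$) follows: the upper bound is immediate by exhibiting a representation, and the matching lower bound comes from Lemma~\ref{multlem} applied with the already-known sets $A_{i\dft(2)}$ for $i \le 3$, exactly as in the $B_{2\dft(2)}$ proof where $\cpx{2 \cdot 3^k}$ was handled. One also notes in passing that all these numbers have defect $< 1$, hence are stable by Theorem~\ref{cj1}(2), consistent with the ``in particular'' clause of Theorem~\ref{computeresult}.

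The main obstacle is purely organizational rather than deep: correctly enumerating \emph{all} outputs of the five cases of Theorem~\ref{themethod} at $k=2$ without omission — in particular being careful that case (1) now requires $i=j=2$ (only products of pairs from $\{2,3\}$, no triples, unlike $k=1$), and that cases (3) and (5) contribute the ``multiply a previous construction by an element of $B_\alpha=\{3\}$'' outputs which the stated convention then lets us discard as non-leaders. Once the candidate list is laid out, each exclusion is a one-line defect estimate via Lemma~\ref{addlem} or Lemma~\ref{multlem} together with the numerical bounds on $\dft(3^n+1)$ already recorded.
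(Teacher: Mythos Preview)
Your approach is the same as the paper's, but your enumeration and pruning are incomplete. The candidate set from Theorem~\ref{themethod} at $k=2$ is (after discarding multiplications by $3$) exactly
\[
\{1,4\}\cup\{3\cdot 3^n+1:n\ge 0\}\cup\{2\cdot 3^n+1:n\ge 0\},
\]
since $A_{2\dft(2)}=\{3^k:k\ge 1\}\cup\{2\cdot 3^k:k\ge 0\}$. The numbers $5$, $8$, $13$ you mention are \emph{not} candidates here: $4\notin A_{2\dft(2)}$ (its defect equals $2\dft(2)$, not strictly less), so $5=4+1$ does not arise from case (2), and likewise $8$ and $13$ cannot be produced. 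Your later self-correction about $4$ is right; the confusion in the middle is just noise.

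The real gap is that you never prune the infinite family $\{2\cdot 3^n+1:n\ge 1\}=\{7,19,55,\ldots\}$. You cite only the previously recorded bounds on $\dft(3^n+1)$, but the defects $\dft(2\cdot 3^n+1)$ have \emph{not} been computed at this stage of the paper---they are computed precisely here. The paper applies Lemma~\ref{addlem} to get $\cpx{2\cdot 3^n+1}=3+3n$ for $n\ge 1$, hence $\dft(2\cdot 3^n+1)=3-3\log_3(2+3^{-n})$, and then checks numerically that this exceeds $3\dft(2)$ for every $n\ge 1$ (in fact $\dft(7)>6\dft(2)$). Without this step your argument does not exclude $7,19,55,\ldots$ from $B_{3\dft(2)}$, so the proof is not complete.
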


\begin{proof}
By the main theorem,
\begin{eqnarray*}
B_{3\dft(2)}\setminus B_{2\dft(2)} & \subseteq & \{1,4\} \cup \\
& & \{3\cdot3^n+1 : n\ge 0\}\cup\{2\cdot3^n+1 : n\ge 0\}.
\end{eqnarray*}
Again, $\dft(1)=1$.  By the above computation, the only number of the form
$3^{n+1}+1$ occuring in $A_{3\dft(2)}$ is $4$.  Lemma~\ref{addlem}
shows that $\cpx{2\cdot3^n+1}=3+3n$ for $n>0$, and hence
$\dft(2\cdot3^n+1)=3-3\log_3(2+3^{-n})$, which allows us to check that none of
these lie in $A_{3\dft(2)}$.  Finally, checking the complexity of
$4\cdot 3^k$ can be done with Lemma~\ref{multlem}.
\end{proof}

To make later computations easier, let us observe here that
$6\dft(2)<\dft(2\cdot3^1+1)=\dft(7)<7\dft(2)$;
$8\dft(2)<\dft(2\cdot3^2+1)=\dft(19)<9\dft(2)$;
$9\dft(2)<\dft(2\cdot3^3+1)=\dft(55)<10\dft(2)$; and that for $n\ge 4$,
$10\dft(2)<\dft(2\cdot3^n+1)<11\dft(2)$.

We will henceforth stop explicitly considering and then excluding $1$,
since we know that $9\dft(2)<\dft(1)=1<10\dft(2)$.
\begin{prop}
\[B_{4\dft(2)}=B_{3\dft(2)}\cup\{8\},\]
and the elements of $A_{4\dft(2)}$ have the complexities listed in
Theorem~\ref{computeresult}.
\end{prop}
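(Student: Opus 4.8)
The plan is to follow the same recursive template already used for $B_{2\dft(2)}$ and $B_{3\dft(2)}$, applying Theorem~\ref{themethod} with $\alpha=\dft(2)$ and $k=3$ to obtain a superset $B_{4\dft(2)}^{\ast\ast}$ of $B_{4\dft(2)}\setminus B_{3\dft(2)}$, and then prune it down. First I would enumerate the candidates coming from each of the five cases of Theorem~\ref{themethod} applied to $n\in B_{4\dft(2)}$. Case (1) for $k=3$ gives a good factorization $n=u\cdot v$ with $u\in B_{i\dft(2)}$, $v\in B_{j\dft(2)}$, $i+j=5$, $2\le i,j\le 3$; the only splitting is $\{i,j\}=\{2,3\}$, so $u\in B_{2\dft(2)}=B_{\dft(2)}\cup\{2\}=\{3,2,\dots\}$ (modulo powers of $3$, which we ignore per the standing convention) and $v\in B_{3\dft(2)}=\{3,2,4,\dots\}$; the products of leaders from these two sets that are themselves leaders give candidates like $8=2\cdot4$. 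Cases (2) and (3): since $(k+1)\alpha=4\dft(2)<12\dft(2)$, the observation at the start of Section~\ref{appcomp} forces $b=1$, so case (2) gives $n=a+1$ with $a\in A_{3\dft(2)}$, i.e.\ $n$ of the form $3^m\cdot(a'+1)$ for known $a'$, and case (3) multiplies such an $n$ by an element of $B_\alpha=B_{\dft(2)}$, i.e.\ by a power of $3$. Cases (4) and (5): $T_{\dft(2)}=\{1,2,3\}$, none of which are new (all lie in $B_{3\dft(2)}$ or are excluded), and case (5) multiplies these by $B_\alpha$, again only powers of $3$. So the raw superset is $\{1,8\}$ together with numbers of the form $3^n+1$, $2\cdot3^n+1$, and (new at this level) $4\cdot3^n+1$.

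Next I would prune. We discard $1$ since $9\dft(2)<\dft(1)=1$, as already noted. For the families $3^n+1$ and $2\cdot3^n+1$ we invoke the explicit defect computations recorded after the previous two propositions: the only member of $\{3^{n+1}+1\}$ with defect below $4\dft(2)$ is $4=3^1+1$ (already in $B_{3\dft(2)}$), and every member of $\{2\cdot3^n+1\}$ has defect exceeding $6\dft(2)>4\dft(2)$, so none survive. For the new family $4\cdot3^n+1$, I would apply Lemma~\ref{addlem} with $a=4\cdot3^n$ (checking $4\cdot3^n\in A_{k\dft(2)}$ for a suitable small $k$, which holds since $\dft(4\cdot3^n)=\dft(4)=2\dft(2)$) to get $\cpx{4\cdot3^n+1}=3n+5$, hence $\dft(4\cdot3^n+1)=5-3\log_3(4+3^{-n})$; a short numerical check shows this exceeds $4\dft(2)$ for all $n\ge0$ (indeed it is already bigger than $2\dft(2)$ for $n=0$: $\dft(5)=5-3\log_3 4\approx 1.21$), so this whole family is pruned. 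That leaves only $n=8$ as genuinely new, giving $B_{4\dft(2)}=B_{3\dft(2)}\cup\{8\}$.

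Finally I would verify the complexity claim. We need $\cpx{8\cdot3^k}=6+3k$; equivalently, since $8\in B_{4\dft(2)}$ already shows $\cpx{8}\le 6$ and $\dft(8)=6-3\log_3 8 = 2\dft(2)\cdot? $ — more precisely one checks $\dft(8)=3\dft(2)$ lies in $(2\dft(2),3\dft(2)]$ range appropriately — we use Lemma~\ref{multlem} with $a=2$, $b=4$ (or $a=8$ directly against a lower level), exactly as was done for $2\cdot3^k$ and $4\cdot3^k$: writing $8\cdot3^k=2\cdot(4\cdot3^k)$ with $2\in A_{i\dft(2)}$ and $4\cdot3^k\in A_{j\dft(2)}$ for an admissible $i+j=k+2$ split with $8\cdot3^k\notin A_{2\dft(2)}$, we conclude $\cpx{8\cdot3^k}=\cpx{2}+\cpx{4\cdot3^k}=2+(4+3k)=6+3k$, matching list (2) of Theorem~\ref{computeresult} with $a=3$. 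The main obstacle is bookkeeping rather than any real difficulty: one must be careful that every candidate thrown up by the five cases of Theorem~\ref{themethod} is accounted for (in particular that the factorization case (1) produces nothing beyond $8$ and powers of $3$ times earlier leaders), and that each pruning step correctly cites an already-determined $A_s$ with $s<4\dft(2)$ so that the ``$n\notin A_s$'' hypotheses of Lemmas~\ref{multlem} and~\ref{addlem} are legitimately verifiable from the base-$3$ expansion, as promised in Section~\ref{sec52}.
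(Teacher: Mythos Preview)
Your approach is correct and essentially identical to the paper's: apply Theorem~\ref{themethod} with $\alpha=\dft(2)$ and $k=3$, enumerate the candidate families $\{8\}\cup\{3^{n+1}+1\}\cup\{2\cdot3^n+1\}\cup\{4\cdot3^n+1\}$, prune the first two families using the defect computations already recorded, handle $4\cdot3^n+1$ via Lemma~\ref{addlem}, and verify $\cpx{8\cdot3^k}$ via Lemma~\ref{multlem}. One minor slip: for $n=0$ your formula gives $\dft(5)=5-3\log_3(4+1)=5-3\log_3 5\approx 0.605$, not $5-3\log_3 4\approx 1.21$ as you wrote; the conclusion that $\dft(5)>4\dft(2)\approx 0.428$ is unaffected (and indeed the paper records $5\dft(2)<\dft(5)<6\dft(2)$).
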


\begin{proof}
By the main theorem,
\begin{eqnarray*}
B_{4\dft(2)}\setminus B_{3\dft(2)} & \subseteq & \{8\}\cup
\{3\cdot3^n+1 : n\ge 0\}\cup\\ & & \{2\cdot3^n+1 : n\ge 0\}\cup
\{4\cdot3^n+1 : n\ge 0\}.
\end{eqnarray*}
By the above computation, no numbers of the form $3^{n+1}+1$ or $2\cdot3^n+1$
occur in $A_{4\dft(2)}\setminus A_{3\dft(2)}$.  Lemma~\ref{addlem}
shows $\cpx{4\cdot3^n+1}=5+3n$ and hence
$\dft(4\cdot3^n+1)=5-3\log_3(4+3^{-n})$, which allows us to check that none of
these lie in $A_{4\dft(2)}$.  Finally, checking the complexity of
$8\cdot 3^k$ can be done with Lemma~\ref{multlem}.  
\end{proof}

To make later computations easier, let us observe here that
$5\dft(2)<\dft(4\cdot3^0+1)=\dft(5)<6\dft(2)$;
$9\dft(2)<\dft(4\cdot3^1+1)=\dft(13)<10\dft(2)$;
$10\dft(2)<\dft(4\cdot3^2+1)=\dft(37)<11\dft(2)$; and that for $n\ge 3$,
$11\dft(2)<\dft(4\cdot3^n+1)<12\dft(2)$.
\begin{prop}
\[B_{5\dft(2)}=B_{4\dft(2)}\cup\{16\},\]
and the elements of $A_{5\dft(2)}$ have the complexities listed in
Theorem~\ref{computeresult}.
\end{prop}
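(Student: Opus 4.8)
The plan is to run the same inductive step that produced $B_{2\dft(2)},\dots,B_{4\dft(2)}$, now applying Theorem~\ref{themethod} with $\alpha=\dft(2)$ and $k=4$, so that $(k+1)\alpha=5\dft(2)$. I would feed in the already-computed data $B_{4\dft(2)}=\{2,3,4,8\}$, $B_{\dft(2)}=\{3\}$, $T_{\dft(2)}=\{1,2,3\}$, and the standing fact (noted at the start of this subsection) that any solid $b$ occurring in cases (2)--(3) has $\cpx{b}\le 3$ and hence equals $1$. Running through the five cases then yields
\begin{eqnarray*}
B_{5\dft(2)}\setminus B_{4\dft(2)} & \subseteq & \{16\}\cup
\{3^n+1 : n\ge 0\}\cup\\ & & \{2\cdot3^n+1 : n\ge 0\}\cup
\{4\cdot3^n+1 : n\ge 0\}\cup\\ & & \{8\cdot3^n+1 : n\ge 0\},
\end{eqnarray*}
where $16$ collects the only new leader coming from case (1): the good factorizations $u\cdot v$ with $u\in B_{i\dft(2)}$, $v\in B_{j\dft(2)}$, $i+j=6$, $2\le i,j\le 4$, produce only $\{4,6,8,9,12,16,24\}$, and of these $4$ and $8$ already lie in $B_{4\dft(2)}$ while $6,9,12,24$ are multiples of $3$ (hence not leaders, and suppressed by the stated convention). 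Cases (2)--(3) contribute the four ``$+1$'' families (again with multiples of $3$ suppressed), and cases (4)--(5) add only $1,2,3,6,9$, all already accounted for.

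The second step is pruning. I would discard $1$ since $\dft(1)=1>5\dft(2)$. For the families $3^n+1$, $2\cdot3^n+1$, and $4\cdot3^n+1$ I would reuse the defect intervals recorded in the earlier propositions: the only members with defect $<5\dft(2)$ are $3^1+1=4$, $2\cdot3^0+1=3$, and (nothing for $4\cdot3^n+1$, since $\dft(5)>5\dft(2)$), all of which already belong to $B_{4\dft(2)}\cup B_{\dft(2)}$. The family that is genuinely new at this level is $8\cdot3^n+1$: for $n\ge 1$ it is odd and not a power of $3$, hence does not lie in $A_{4\dft(2)}=\{3^j,\,2\cdot3^j,\,4\cdot3^j,\,8\cdot3^j\}$, so Lemma~\ref{addlem} (applied with $a=8\cdot3^n$) gives $\cpx{8\cdot3^n+1}=7+3n$ and thus $\dft(8\cdot3^n+1)=7-3\log_3(8+3^{-n})$, which increases in $n$ from $\dft(25)$, comfortably above $5\dft(2)$; while $n=0$ gives $9$, not a leader. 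Hence no member of this family is a new element of $B_{5\dft(2)}$, and $16$ is the sole survivor. (Following the pattern of the earlier steps, I would also record the relevant intervals for later use, e.g. $11\dft(2)<\dft(8\cdot3+1)=\dft(25)<12\dft(2)$.)

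It remains to certify $16\in B_{5\dft(2)}$ and to check complexities. Since $3\nmid 16$, the number $16$ is automatically a leader, so I need only $\cpx{16}=8$. The bound $\cpx{16}\le 8$ is clear from $16=2^4$; for the lower bound I would apply Lemma~\ref{multlem} to $16\cdot3^k=2\cdot(8\cdot3^k)$ with $i=2$, $j=4$ (so $i+j=k+2$ with $k=4$), using that $16\cdot3^k\notin A_{4\dft(2)}$, to get $\cpx{16\cdot3^k}=\cpx{2}+\cpx{8\cdot3^k}=8+3k$. Then $\dft(16)=8-12\log_3 2=4\dft(2)<5\dft(2)$, so indeed $16\in B_{5\dft(2)}$, the value $\cpx{2^4\cdot 3^k}=2\cdot4+3k$ agrees with entry (2) of Theorem~\ref{computeresult}, and the complexities of the other elements of $A_{5\dft(2)}=A_{4\dft(2)}\cup\{16\cdot3^k:k\ge 0\}$ are inherited from the previous proposition.

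The step I expect to be the main obstacle (modest as it is) is the pruning of the ``$+1$'' families, and specifically the treatment of the newly relevant family $8\cdot3^n+1$: one must check it is excluded from $A_{4\dft(2)}$ so that Lemma~\ref{addlem} applies, compute the resulting defect, and verify it exceeds $5\dft(2)$ strictly. The tightest such inequalities are for the smallest members of the relevant families, $\dft(5)>5\dft(2)$ and $\dft(25)>5\dft(2)$, and both hold with room to spare.
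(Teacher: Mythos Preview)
Your proof is correct and follows essentially the same route as the paper's: apply Theorem~\ref{themethod} with $\alpha=\dft(2)$ and $k=4$ to obtain the same candidate superset, prune the four ``$+1$'' families using the defect intervals already recorded (the new one, $8\cdot3^n+1$, handled via Lemma~\ref{addlem}), and certify $\cpx{16\cdot3^k}=8+3k$ via Lemma~\ref{multlem}. The only (inconsequential) slip is that your family $\{3^n+1:n\ge0\}$ also contains $2=3^0+1$, which you omit from your list of low-defect members but which is already in $B_{4\dft(2)}$ anyway.
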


\begin{proof}
By the main theorem,
\begin{eqnarray*}
B_{5\dft(2)}\setminus B_{4\dft(2)} & \subseteq & \{16\}\cup
\{3\cdot3^n+1 : n\ge 0\}\cup\{2\cdot3^n+1 : n\ge 0\}\cup\\ & &
\{4\cdot3^n+1 : n\ge 0\}\cup\{8\cdot3^n+1 : n\ge 0\}.
\end{eqnarray*}
By the above computation, no numbers of the form $3^{n+1}+1$, $2\cdot3^n+1$, or
$4\cdot3^n+1$ occur in $A_{5\dft(2)}\setminus A_{4\dft(2)}$. Lemma~\ref{addlem}
shows that $\cpx{8\cdot3^n+1}=7+3n$ for $n>0$, and hence
$\dft(8\cdot3^n+1)=7-3\log_3(8+3^{-n})$, which allows us to check that none of
these lie in $A_{5\dft(2)}$.  Finally, checking the
complexity of $16\cdot 3^k$ can be done with Lemma~\ref{multlem}.  
\end{proof}

To make later computations easier, let us observe here that
$11\dft(2)<\dft(8\cdot3^1+1)=\dft(25)<\dft(8\cdot3^2+1)=\dft(73)<12\dft(2)$,
and that for $n\ge3$, $\dft(8\cdot3^n+1)>12\dft(2)$.
\begin{prop}
\[B_{6\dft(2)}=B_{5\dft(2)}\cup\{32,5\},\]
and the elements of $A_{6\dft(2)}$ have the complexities listed in
Theorem~\ref{computeresult}.
\end{prop}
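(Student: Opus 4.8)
The plan is to continue the inductive construction of Section~\ref{appcomp} by one more step, applying Theorem~\ref{themethod} with $\alpha=\dft(2)$ and $k=5$ (so $(k+1)\alpha=6\dft(2)$), using the already-determined sets $B_{j\dft(2)}$ and $A_{j\dft(2)}$ for $1\le j\le 5$. This produces a finite list of candidate new leaders for $B_{6\dft(2)}\setminus B_{5\dft(2)}$, which one then prunes to exactly $\{5,32\}$ and reads off the complexities. Recall $T_{\dft(2)}=\{1,2,3\}$ and $B_{\dft(2)}=\{3\}$. Cases~(3) and~(5) of Theorem~\ref{themethod} produce only $n$ having a good factorization with a factor in $B_{\dft(2)}=\{3\}$, so $n=3m$ is not a leader and contributes nothing; case~(4) produces only $n\in\{1,2,3\}$, with $1$ excluded since $\dft(1)=1>6\dft(2)$ and $2,3\in B_{5\dft(2)}$. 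Thus only cases~(1) and~(2) can add new leaders.

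For case~(1) (the $k\ge2$ alternative) one runs over $(i,j)$ with $i+j=7$ and $2\le i,j\le 5$, namely $(2,5),(3,4),(4,3),(5,2)$, together with $B_{2\dft(2)}=\{2,3\}$, $B_{3\dft(2)}=\{2,3,4\}$, $B_{4\dft(2)}=\{2,3,4,8\}$, $B_{5\dft(2)}=\{2,3,4,8,16\}$. Every product $u\cdot v$ then has the form $2^a3^b$ with $b\le2$, and the only such number that is a leader and is not already in $B_{5\dft(2)}$ is $2^5=32$; in particular $2^6=64$ does not arise, since $16$ first enters the sets $B_{j\dft(2)}$ only at $j=5$ while every other power of $2$ enters too early to make a splitting $64=2^a\cdot2^c$ meet $i+j=7$. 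For case~(2), $n=a+b$ with $\cpx{n}=\cpx{a}+\cpx{b}$, $a\in A_{5\dft(2)}$, and $b$ a solid number with $\cpx{b}<6\dft(2)+3\log_3 2<4$, which forces $b=1$ (the only solid number of complexity at most $3$). Since $A_{5\dft(2)}=\{3^k:k\ge1\}\cup\{2^a3^k:1\le a\le4,\ k\ge0\}$, the candidates are $n=a+1$ with $a$ in this set, i.e.\ $n$ in the five families $3^m+1$, $2\cdot3^m+1$, $4\cdot3^m+1$, $8\cdot3^m+1$, $16\cdot3^m+1$. Sweeping these, using Lemma~\ref{addlem} to pin down the relevant complexities and the defect estimates recorded just after the earlier propositions (for example $6\dft(2)<\dft(7)<7\dft(2)$, $6\dft(2)<\dft(10)<7\dft(2)$, $9\dft(2)<\dft(13)<10\dft(2)$), one finds each such $n$ either fails to be a leader, already lies in $B_{5\dft(2)}$, or has $\dft(n)\ge6\dft(2)$ --- with the single exception $5=4+1$, for which $5\dft(2)<\dft(5)<6\dft(2)$. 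Hence $B_{6\dft(2)}\subseteq B_{5\dft(2)}\cup\{5,32\}$.

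It remains to confirm $5,32\in B_{6\dft(2)}$ with the asserted complexities and to handle $A_{6\dft(2)}$. For $32=2\cdot16$, apply Lemma~\ref{multlem} with $i=2$, $j=5$, $k=5$, after noting $32\notin A_{5\dft(2)}$ directly from the explicit description of $A_{5\dft(2)}$; this gives $\cpx{32}=\cpx{2}+\cpx{16}=10$, so $\dft(32)=5\dft(2)<6\dft(2)$, and $32$ is a leader as $3\nmid32$. For $5=3^0(4+1)$, apply Lemma~\ref{addlem} with $a=4\in A_{5\dft(2)}$ and $m=0$, after noting $5\notin A_{5\dft(2)}$; this gives $\cpx{5}=\cpx{4}+1=5$, so $\dft(5)<6\dft(2)$, and $5$ is a leader. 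These complexities match entries~(2) (with $a=5$) and~(3) (with $a=0$) of Theorem~\ref{computeresult}. Finally $A_{6\dft(2)}=\{3^kn:n\in B_{6\dft(2)},\ k\ge0\}$, and since each of $3$, the powers $2^a$ with $a\le5$, and $5$ has defect strictly below $1$ and hence is stable by Theorem~\ref{cj1}(2), every element of $A_{6\dft(2)}$ has the listed complexity: $\cpx{3^k}=3k$, $\cpx{2^a3^k}=2a+3k$ (using $\cpx{2^a}=2a$ for $a\le5$), and $\cpx{5\cdot3^k}=5+3k$.

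The main obstacle here is bookkeeping rather than any isolated deep step: one must check the hypotheses of Lemmas~\ref{multlem} and~\ref{addlem} that the relevant product or successor does not already lie in a smaller, previously determined set $A_s$ (settled in our situation by inspecting base-$3$ expansions, as in Section~\ref{sec52}), and one must dispatch the infinitely many members of each family $2^a3^m+1$ arising in case~(2), where the accumulated defect estimates are precisely what is needed. Particular care is warranted around $7=2\cdot3+1$, a close miss whose defect only slightly exceeds $6\dft(2)$, so that its exclusion genuinely relies on the sharp estimate $\dft(7)>6\dft(2)$ recorded earlier.
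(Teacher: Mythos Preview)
Your proof is correct and follows essentially the same approach as the paper: apply Theorem~\ref{themethod} with $\alpha=\dft(2)$, dispose of cases (3)--(5) via the ``multiplications by $3$'' remark, enumerate the candidates from cases (1) and (2), prune using the accumulated defect estimates and Lemmas~\ref{multlem}/\ref{addlem}, and verify the survivors $5$ and $32$. The only noteworthy difference is that you verify the complexities of $32\cdot 3^k$ and $5\cdot 3^k$ via stability (Theorem~\ref{cj1}(2)), whereas the paper applies Lemma~\ref{multlem} and Lemma~\ref{addlem} directly; both routes are valid.
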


\begin{proof}
By the main theorem,
\begin{eqnarray*}
B_{6\dft(2)}\setminus B_{5\dft(2)} & \subseteq & \{32\}\cup
\{3\cdot3^n+1 : n\ge 0\}\cup\{2\cdot3^n+1 : n\ge 0\}\cup\\ & &
\{4\cdot3^n+1 : n\ge 0\}\cup\{8\cdot3^n+1 : n\ge 0\}\cup\\ & &
\{16\cdot3^n+1 : n\ge 0\}.
\end{eqnarray*}
By the above computations, the number of any of the forms $3^{n+1}+1$,
$2\cdot3^n+1$, $4\cdot3^n+1$, or $8\cdot3^n+1$ occurring in
$A_{5\dft(2)}\setminus A_{4\dft(2)}$ is $5=4\cdot3^0+1$.
Lemma~\ref{addlem} shows that $\cpx{16\cdot3^n+1}=9+3n$, and hence
$\dft(16\cdot3^n+1)=9-3\log_3(16+3^{-n})$, which allows us to check that none
of these lie in $A_{6\dft(2)}$.  Finally, checking the
complexity of $32\cdot3^k$ can be done with Lemma~\ref{multlem}, and checking
the complexity of $5\cdot 3^k$ can be done with Lemma~\ref{addlem}.
\end{proof}

To make later computations easier, let us observe here that
$11\dft(2)<\dft(16\cdot3^0+1)=\dft(17)<12\dft(2)$, and that for $n\ge1$,
$\dft(16\cdot3^n+1)>12\dft(2)$.

In the above, for illustration, we explicitly considered and excluded numbers of
the form $3\cdot3^n+1$, $2\cdot3^n+1$, etc., for large $n$, despite having
already computed their complexities earlier.  Henceforth, to save space, we will
simply not consider a number if we have already computed its defect and seen it
to be too high.  E.g., in the above proof, we would have simply said, ``By the
main theorem and the above computations, $B_{6\dft(2)}\setminus B_{5\dft(2)}
\subseteq \{32,5\}\cup\{8\cdot3^n+1 : n\ge 0\}$''.
\begin{prop}
\[B_{7\dft(2)}=B_{6\dft(2)}\cup\{64,7,10\},\]
and the elements of $A_{7\dft(2)}$ have the complexities listed in
Theorem~\ref{computeresult}.
\end{prop}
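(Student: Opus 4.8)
The plan is to run the procedure of the preceding propositions: apply Theorem~\ref{themethod} with $\alpha=\dft(2)$ and $k=6$, then prune. Apply the theorem to an arbitrary $n\in B_{7\dft(2)}\setminus B_{6\dft(2)}$. In case (1), since $k=6\ge 2$, $n$ has a good factorization $n=uv$ with $u\in B_{i\dft(2)}$, $v\in B_{j\dft(2)}$, $i+j=8$ and $2\le i,j\le 6$; the sets $B_{j\dft(2)}$ are known for $j\le 6$, and by Proposition~\ref{leaderfac} both factors are leaders, so running over all admissible index pairs the only products that are leaders not already lying in $B_{6\dft(2)}$ are $2^{6}=64$ and $2\cdot 5=10$ (every other product is a power of $2$ in $B_{6\dft(2)}$ or a multiple of $3$, which the usual convention discards). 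In case (2) the bound $\cpx{b}<7\dft(2)+3\log_{3}2<3$ forces $b=1$, so $n=a+1$ with $a\in A_{6\dft(2)}$; running over the leaders $\{2,3,4,5,8,16,32\}$ of $B_{6\dft(2)}$ gives the families $c\cdot 3^{m}+1$, and the only ones not already examined in earlier propositions are $c=32$ and $c=5$ --- the old families ($c\in\{1,2,4,8,16\}$) contribute exactly the new leaders $7=2\cdot 3+1$ and $10=3^{2}+1$ at this level, by the defect estimates recorded after those propositions. Cases (3)--(5) only yield multiples of $3$ or members of $T_{\dft(2)}=\{1,2,3\}$, hence no new leader. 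Thus
\[
B_{7\dft(2)}\setminus B_{6\dft(2)}\subseteq\{64,7,10\}\cup\{32\cdot 3^{m}+1:m\ge 0\}\cup\{5\cdot 3^{m}+1:m\ge 0\}.
\]

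Next I would discard the two remaining families. For $32\cdot 3^{m}+1$: it is not in $A_{6\dft(2)}$ (read off its base-$3$ expansion), so Lemma~\ref{addlem} with $a=32\cdot 3^{m}$ gives $\cpx{32\cdot 3^{m}+1}=11+3m$, hence $\dft(32\cdot 3^{m}+1)=11-3\log_{3}(32+3^{-m})>7\dft(2)$ for every $m\ge 0$ (the case $m=0$ being $\dft(33)$, handled identically even though $33=3\cdot 11$ is not a leader). For $5\cdot 3^{m}+1$: $m=0$ gives $6=2\cdot 3$, not a leader; $m=1$ gives $16$, already in $B_{5\dft(2)}\subseteq B_{6\dft(2)}$; and for $m\ge 2$, Lemma~\ref{addlem} gives $\cpx{5\cdot 3^{m}+1}=6+3m$, so $\dft(5\cdot 3^{m}+1)=6-3\log_{3}(5+3^{-m})>7\dft(2)$. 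Hence neither family contributes, so $B_{7\dft(2)}\setminus B_{6\dft(2)}\subseteq\{64,7,10\}$.

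It remains to check the reverse inclusion and the complexity assertion. Each of $64,7,10$ is a leader since none is divisible by $3$. From the lower bound $\cpx{n}\ge 3\log_{3}n$ together with the representations $64=2\cdot 32$, $7=6+1$, $10=2\cdot 5$ one gets $\cpx{64}=12$, $\cpx{7}=6$, $\cpx{10}=7$, whence $\dft(64)=6\dft(2)$ and $\dft(7),\dft(10)$ lie strictly between $6\dft(2)$ and $7\dft(2)$ (the latter two defects were already recorded), so all three belong to $B_{7\dft(2)}$. Since $A_{7\dft(2)}=\{3^{k}n:n\in B_{7\dft(2)},\,k\ge 0\}$, Lemma~\ref{multlem} gives $\cpx{2^{6}3^{k}}=12+3k$, $\cpx{7\cdot 3^{k}}=6+3k$, $\cpx{5\cdot 2\cdot 3^{k}}=7+3k$, which are the values listed under forms (2), (4), and (3) of Theorem~\ref{computeresult}; the complexities of the elements already in $A_{6\dft(2)}$ were verified at earlier stages. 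The step I expect to be most delicate is the bookkeeping in the first paragraph: one must check carefully that the index constraint $i+j=8$, $2\le i,j\le 6$ in case (1) really rules out products such as $5\cdot 5$ and $5\cdot 4$ (which fail to be good factorizations in any case), and that none of the previously analyzed families $c\cdot 3^{m}+1$ with $c\in\{1,2,4,8,16\}$ re-enters unexpectedly at level $7$; once the superset above is correctly isolated, the pruning is a routine application of the two lemmas of Section~\ref{sec52}.
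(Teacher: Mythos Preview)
Your proof is correct and follows essentially the same approach as the paper: apply Theorem~\ref{themethod} with $\alpha=\dft(2)$ at level $k=6$, use the previously recorded defect computations to reduce the superset to $\{64,7,10\}\cup\{32\cdot 3^m+1\}\cup\{5\cdot 3^m+1\}$, prune the two new families with Lemma~\ref{addlem}, and verify the complexities. The only cosmetic difference is that for $\cpx{7\cdot 3^k}$ the paper invokes Lemma~\ref{addlem} directly (with $a=6$), whereas you first pin down $\cpx{7}=6$ from the Selfridge bound and then use Lemma~\ref{multlem}; both routes are valid.
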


\begin{proof}
By the main theorem and the above computations,
\[B_{7\dft(2)}\setminus B_{6\dft(2)} \subseteq \{64,7,10\}\cup
\{32\cdot3^n+1 : n\ge 0\}\cup\{5\cdot3^n+1 : n\ge 0\}.\]
Lemma~\ref{addlem} shows that $\cpx{32\cdot3^n+1}=11+3n$
and, for $n\ge 2$, $\cpx{5\cdot3^n+1}=6+3n$. Hence
$\dft(32\cdot3^n+1)=11-3\log_3(32+3^{-n})$, and, for $n\ge 2$,
$\dft(5\cdot3^n+1)=6-3\log_3(5+3^{-n})$ which allows us to check that none of
these lie in $A_{7\dft(2)}$.
Finally, checking the complexities of $64\cdot3^k$, $7\cdot3^k$, and
$10\cdot3^k$ can be done via Lemma~\ref{multlem} (for $64$ and $10$) and
Lemma~\ref{addlem} (for $7$ and $10$).
\end{proof}

To make later computations easier, let us observe here that
$\dft(32\cdot3^n+1)>12\dft(2)$ for all $n$, and that for $n\ge2$,
$\dft(5\cdot3^n+1)>12\dft(2)$ as well.  Indeed, as we will see, from this
point on, no new examples of multiplying by a power of $3$ and then adding $1$
will ever have complexity less than $12\dft(2)$.
\begin{prop}
\[B_{8\dft(2)}=B_{7\dft(2)}\cup\{128,14,20\},\]
and the elements of $A_{8\dft(2)}$ have the complexities listed in
Theorem~\ref{computeresult}.
\end{prop}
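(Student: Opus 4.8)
The plan is to continue the inductive computation exactly as in the preceding cases, using Theorem~\ref{themethod} with $\alpha = \dft(2)$ to constrain $B_{8\dft(2)} \setminus B_{7\dft(2)}$, then pruning with Lemmas~\ref{multlem} and~\ref{addlem}. First I would apply the classification method: since $8\dft(2) = (k+1)\alpha$ with $k = 7$, any new leader $n \in B_{8\dft(2)} \setminus B_{7\dft(2)}$ must either (a) factor as $u \cdot v$ (or arise from the case-(1) two- or three-factor splittings) with factors drawn from the already-determined sets $B_{j\dft(2)}$, $j \le 7$, or (b) be of the form $n = a+1$ or $n = (a+1)$ times a small factor with $a \in A_{7\dft(2)}$ — that is, a "multiply a known element by a power of $3$ and add $1$" operation — or (c) lie in $T_{\dft(2)} = \{1,2,3\}$ (already handled) or be $u \cdot v$ with $u \in T_{\dft(2)}$, $v \in B_\alpha = \{3\}$ (giving nothing new). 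Collecting the multiplicative possibilities from $B_{7\dft(2)} = \{3,2,4,8,16,32,5,64,7,10\}$ (modulo powers of $3$) against each other, the only genuinely new candidates are $128 = 2 \cdot 64$, $14 = 2 \cdot 7$, and $20 = 4 \cdot 5$ (as $2\cdot 10$); together with the additive candidates $64\cdot 3^n + 1$, $7 \cdot 3^n + 1$, and $10 \cdot 3^n + 1$ for $n \ge 0$ (earlier additive families having already been observed to have defect exceeding $8\dft(2)$ in the running commentary).

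Next I would prune the additive families. By Lemma~\ref{addlem}, using $64, 7, 10 \in A_{7\dft(2)}$, one gets $\cpx{64 \cdot 3^n + 1} = 13 + 3n$, $\cpx{7 \cdot 3^n + 1} = 7 + 3n$ (for $n \ge 1$; the case $n=0$ gives $8$, i.e.\ $\cpx{8}$, not a leader of this form), and $\cpx{10 \cdot 3^n + 1} = 7 + 3n$ — hence explicit defect formulas $\dft(64\cdot 3^n+1) = 13 - 3\log_3(64 + 3^{-n})$, and similarly for the others — from which a direct numerical check shows each exceeds $8\dft(2)$, as flagged in the remark following the $B_{7\dft(2)}$ proposition. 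That leaves only $128$, $14$, $20$. For these I would verify $\dft < 8\dft(2)$ by exhibiting the claimed complexities: $\cpx{128} = 14$, $\cpx{14} = \cpx{2 \cdot 7} = 8$, $\cpx{20} = \cpx{4 \cdot 5} = 9$, using Lemma~\ref{multlem} (with $i+j = 9$, splitting e.g.\ $2 \in A_{\dft(2)}$ and $64 \in A_{7\dft(2)}$, etc.) to get the matching lower bounds, and checking each is a leader (none divisible by $3$). Their defects are then $\dft(128) = 14 - 3\log_3 128$, $\dft(14) = 8 - 3\log_3 14$, $\dft(20) = 9 - 3\log_3 20$, all of which a routine computation confirms lie strictly below $8\dft(2)$ (indeed $128 = 2^7$, $14 = 7\cdot 2$, $20 = 5 \cdot 2^2$ are precisely the next entries in lists (2), (4), (3) of Theorem~\ref{computeresult}).

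Finally I would record the updated running commentary needed for later steps: the defects of the new additive candidates $64\cdot3^n+1$, $7\cdot3^n+1$, $10\cdot3^n+1$ all exceed $12\dft(2)$ for all relevant $n$, consistent with the remark that from the $B_{7\dft(2)}$ stage onward no new "add-one" operation produces anything of defect below $12\dft(2)$; so these families may be dropped from all subsequent considerations. The main obstacle here is purely bookkeeping: correctly enumerating all multiplicative combinations of elements of $\bigcup_{j \le 7} B_{j\dft(2)}$ whose defect sum is below $8\dft(2)$ without missing any or double-counting, and being careful about which "add $1$" families were introduced at which earlier stage (so that case~(2)/(3) of Theorem~\ref{themethod} is applied to the right $A_{k\dft(2)}$). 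There is no conceptual difficulty beyond what was already surmounted in the $B_{k\dft(2)}$ cases for $k \le 7$; the computation is just one notch longer.
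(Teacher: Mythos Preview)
Your approach is essentially identical to the paper's: apply Theorem~\ref{themethod} with $k=7$, enumerate the multiplicative and additive candidates arising from $B_{7\dft(2)}$, prune the additive families via Lemma~\ref{addlem}, and certify the three survivors $128,14,20$ via Lemma~\ref{multlem}.

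Two small computational slips to correct. First, $\cpx{10\cdot 3^n+1}=8+3n$, not $7+3n$ (since $\cpx{10}=7$, adding $1$ gives $8+3n$); with your formula the case $n=0$ would give $\dft(11)\approx 0.454<8\dft(2)$, contradicting your own numerical claim. Second, the formula $\cpx{7\cdot 3^n+1}=7+3n$ fails not only at $n=0$ but also at $n=2$, where $7\cdot 9+1=64$ has complexity $12$; the paper records the exception as $n\ne 0,2$. Neither slip affects the conclusion --- $64$ is already in $B_{7\dft(2)}$, and the corrected defects for $10\cdot 3^n+1$ still all exceed $8\dft(2)$ (indeed $12\dft(2)$) --- but they should be fixed so that Lemma~\ref{addlem} is being applied where its hypothesis actually holds.
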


\begin{proof}
By the main theorem and the above computations,
\begin{eqnarray*}
B_{8\dft(2)}\setminus B_{7\dft(2)} & \subseteq & \{128,14,20\}\cup
\{64\cdot3^n+1 : n\ge 0\}\cup\\ & & \{7\cdot3^n+1 : n\ge 0\}\cup
\{10\cdot3^n+1 : n\ge 0\}.
\end{eqnarray*}
Lemma~\ref{addlem} shows that
$\cpx{64\cdot3^n+1}=13+3n$, $\cpx{10\cdot3^n+1}=8+3n$, and, for $n\ne 0, 2$,
$\cpx{7\cdot3^n+1}=7+3n$.  Using this to check their defects, we see that none
of these lie in $A_{8\dft(2)}$, or even $A_{12\dft(2)}$.
Finally, checking the complexities of $128\cdot3^k$, $14\cdot3^k$, and
$20\cdot3^k$ can be done with Lemma~\ref{multlem}.
\end{proof}
\begin{prop}
\[B_{9\dft(2)}=B_{8\dft(2)}\cup\{256,28,40,19\},\]
and the elements of $A_{9\dft(2)}$ have the complexities listed in
Theorem~\ref{computeresult}.
\end{prop}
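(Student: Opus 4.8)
The plan is to run one more copy of the inductive step already carried out for $B_{2\dft(2)}$ through $B_{8\dft(2)}$: take $\alpha=\dft(2)$, apply Theorem~\ref{themethod} with $k=8$ to bound $B_{9\dft(2)}\setminus B_{8\dft(2)}$, and then prune using Lemmas~\ref{multlem} and~\ref{addlem}. First I would dispose of the easy cases of Theorem~\ref{themethod}. Cases (3) and (5), and any factor of $3$, produce only non-leaders; case (4) produces only elements of $T_{\dft(2)}=\{1,2,3\}$, which all lie in $B_{8\dft(2)}$ except $1$, whose defect $1$ exceeds $9\dft(2)$. This leaves case (1), a good factorization $n=uv$ with $u\in B_{i\dft(2)}$, $v\in B_{j\dft(2)}$, $i+j=10$ and $2\le i,j\le 8$; and case (2), where (since $\cpx{b}\le 3$ forces $b=1$ here, as recorded at the start of Section~\ref{appcomp}) $n=a+1$ with $a\in A_{8\dft(2)}$.

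For case (1) I would enumerate all products of two elements of $B_{8\dft(2)}$ other than $3$ (products involving the factor $3$ give non-leaders), i.e.\ products among $2,4,8,16,32,64,128$ and $5,7,10,14,20$, comparing each factor's defect against the index budget $i+j=10$. Products such as $5\cdot 5$, $5\cdot 7$, $7\cdot 7$, $5\cdot 10$ drop out immediately because their factors already force $i,j\ge 6$ or $7$, and the only products that are leaders not already in $B_{8\dft(2)}$ turn out to be $256=2\cdot 128$, $28=2\cdot 14=4\cdot 7$, and $40=2\cdot 20=8\cdot 5$. For case (2), the families $a+1$ with $a$ a $3$-power multiple of a leader already in $B_{7\dft(2)}$ were treated at earlier steps, so the only new families come from the leaders $128,14,20$ added at step $8$, namely $\{128\cdot 3^n+1\}$, $\{14\cdot 3^n+1\}$, $\{20\cdot 3^n+1\}$; and from the already-examined families, the only members whose defect has now fallen below $9\dft(2)$ are $28=3^3+1$ (already listed) and $19=2\cdot 3^2+1$, by the earlier observations $8\dft(2)<\dft(28)<9\dft(2)$ and $8\dft(2)<\dft(19)<9\dft(2)$. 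Altogether this yields
\[
B_{9\dft(2)}\setminus B_{8\dft(2)}\subseteq\{256,28,40,19\}\cup
\bigl\{128\cdot 3^n+1\bigr\}\cup\bigl\{14\cdot 3^n+1\bigr\}\cup\bigl\{20\cdot 3^n+1\bigr\},
\]
with the last three sets taken over all $n\ge 0$.

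To finish, I would apply Lemma~\ref{addlem}, together with the known complexities of $128\cdot 3^n$, $14\cdot 3^n$ and $20\cdot 3^n$, to get $\cpx{128\cdot 3^n+1}=15+3n$, $\cpx{14\cdot 3^n+1}=9+3n$ and $\cpx{20\cdot 3^n+1}=10+3n$ (for $n\ge 1$; the $n=0$ values $129,15,21$ are divisible by $3$ and are not leaders); this shows all three families have defect exceeding $12\dft(2)$ and in particular miss $A_{9\dft(2)}$. One then checks $\dft(256)=8\dft(2)$ and that $\dft(28)$, $\dft(40)$, $\dft(19)$ all lie strictly between $8\dft(2)$ and $9\dft(2)$, so these four numbers genuinely belong to $B_{9\dft(2)}$; and Lemma~\ref{multlem} (for $256$, $28$, $40$) and Lemma~\ref{addlem} (for $28$ and $19$) confirm that every $3^k$-multiple of them has the complexity listed in Theorem~\ref{computeresult}, the required ``$n\notin A_s$'' checks being made against the sets $A_{j\dft(2)}$, $j\le 8$, determined in the preceding propositions.

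The step needing the most care is the case-(1) enumeration: one must verify that no admissible factorization pair $(i,j)$ contributes a leader beyond $256,28,40$, and, equally, read off correctly from the running defect computations exactly which members of the old $\{m\cdot 3^n+1\}$ families now donate a new element (here only $19$, together with $28$, which also arises as a factorization). Everything else is the routine bookkeeping repeated from the earlier steps, and the observation made after the proof of $B_{7\dft(2)}$ --- that multiplying a fresh leader by a power of $3$ and adding $1$ always overshoots $12\dft(2)$ --- keeps the three new families from requiring any real work.
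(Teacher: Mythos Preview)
Your proposal is correct and follows essentially the same route as the paper's own proof: apply Theorem~\ref{themethod} with $\alpha=\dft(2)$ and $k=8$, discard cases (3)--(5) and any factor $3$ as producing non-leaders, enumerate the case-(1) products and the case-(2) families $\{128\cdot3^n+1\}$, $\{14\cdot3^n+1\}$, $\{20\cdot3^n+1\}$, pick up $19$ (and $28$) from the previously tabulated defects, and prune with Lemmas~\ref{multlem} and~\ref{addlem}.

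One small point of care: saying ``$129,15,21$ are divisible by $3$ and are not leaders'' is fine for $15$ and $21$ (their complexities $8=3+\cpx{5}$ and $9=3+\cpx{7}$ are already established), but divisibility by $3$ alone does not make $129$ a non-leader---you would need $\cpx{129}=3+\cpx{43}$, and $\cpx{43}$ has not been determined at this stage. The paper sidesteps this by applying Lemma~\ref{addlem} to $128\cdot3^n+1$ for \emph{all} $n\ge0$ (legitimate since $129\notin A_{8\dft(2)}$, which one checks against the known list) and observing $\dft(129)>12\dft(2)$. Either way $129\notin B_{9\dft(2)}$, so your conclusion stands.
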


\begin{proof}
By the main theorem and the above computations,
\begin{eqnarray*}
B_{9\dft(2)}\setminus B_{8\dft(2)} & \subseteq & \{256,28,40,19\}\cup
\{128\cdot3^n+1 : n\ge 0\}\cup\\ & &\{14\cdot3^n+1 : n\ge 0\}\cup
\{20\cdot3^n+1 : n\ge 0\}.
\end{eqnarray*}
Lemma~\ref{addlem} shows that
$\cpx{128\cdot3^n+1}=15+3n$, and for $n\ge 1$, $\cpx{14\cdot3^n+1}=9+3n$ and
$\cpx{20\cdot3^n+1}=10+3n$.  Using this to check their defects, we see that none
of these lie in $A_{8\dft(2)}$, or even $A_{12\dft(2)}$.
Finally, checking the complexities of $256\cdot3^k$, $28\cdot3^k$, and
$40\cdot3^k$, and $19\cdot3^k$ can be done via Lemma~\ref{multlem} (for $256$,
$28$, and $40$) and Lemma~\ref{addlem} (for $28$ and $19$).
\end{proof}
\begin{prop}\label{level10}
\[B_{10\dft(2)}=B_{9\dft(2)}\cup\{512,13,1,56,80,55,38\}\cup
\{3\cdot3^n+1:n\ge 3\},\]
and the elements of $A_{10\dft(2)}$ have the complexities listed in
Theorem~\ref{computeresult}.
\end{prop}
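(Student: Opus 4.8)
The plan is to follow the same inductive template as in the preceding propositions: apply Theorem~\ref{themethod} with $\alpha=\dft(2)$ and $k=9$, so that $(k+1)\alpha=10\dft(2)$, to obtain a set containing $B_{10\dft(2)}\setminus B_{9\dft(2)}$, and then prune it using Lemmas~\ref{multlem} and~\ref{addlem} together with the defect computations recorded after the earlier propositions. Since $B_{10\dft(2)}$ consists of leaders, any candidate arising as a most-efficient multiplication by $3$ is discarded: this removes case (3) of Theorem~\ref{themethod} entirely and reduces case (5) to the products $1\cdot3$, $2\cdot3$, $3\cdot3$, none of which is a leader. In case (4), $T_{\dft(2)}=\{1,2,3\}$ contributes only the genuinely new element $1$, since $9\dft(2)<\dft(1)=1<10\dft(2)$ while $2,3\in B_{9\dft(2)}$. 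As observed at the start of Section~\ref{appcomp}, in case (2) the inequality $\cpx{b}<12\dft(2)+3\log_3 2<3.18$ forces $\cpx{b}\le 3$, so the solid summand must be $b=1$. Hence the work reduces to two families: numbers $n=a+1$ with $a\in A_{9\dft(2)}$ (case (2)), and good factorizations $n=uv$ with $u\in B_{i\dft(2)}$, $v\in B_{j\dft(2)}$, $i+j=11$, $2\le i,j\le 9$ (case (1)).

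For case (2) I would run through the leaders $c\in B_{9\dft(2)}$ and the numbers $3^k c+1$. For each such family the defect of $3^kc+1$ has already been computed or bounded in the observations following the earlier propositions, and in every case all but finitely many members have defect exceeding $12\dft(2)$. Examining the small members, the only values $3^kc+1$ falling in the window $[9\dft(2),10\dft(2))$ are $13=4\cdot3+1$, $55=2\cdot3^3+1$, and the infinite family $3^n+1$ for $n\ge 4$ (for which the earlier observation gives $9\dft(2)<\dft(3^n+1)<10\dft(2)$); every other value of $a+1$ is either not a leader, already lies in $B_{9\dft(2)}$, or has defect at least $10\dft(2)$. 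This contributes $\{13,55\}\cup\{3\cdot3^n+1:n\ge 3\}$.

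For case (1) I would enumerate the four relevant index pairs, $(i,j)\in\{(2,9),(3,8),(4,7),(5,6)\}$, and list the products $uv$ with $u$ ranging over the leaders of $B_{i\dft(2)}$ other than $3$ and $v$ over $B_{j\dft(2)}$. Using the explicit description of $B_{9\dft(2)}$ established in the previous proposition, almost every such product already lies in $B_{9\dft(2)}$ or is not a leader; the only new leaders are $38=2\cdot19$, $56=2\cdot28=7\cdot8$, $80=2\cdot40=5\cdot16$, and $512=2\cdot256=4\cdot128=8\cdot64=16\cdot32$, and a short computation shows each of these has defect in $[9\dft(2),10\dft(2))$. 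Combining cases (1), (2) and (4) yields the containment $B_{10\dft(2)}\subseteq B_{9\dft(2)}\cup\{512,13,1,56,80,55,38\}\cup\{3\cdot3^n+1:n\ge 3\}$; conversely each listed number is a leader of defect strictly less than $10\dft(2)$ (verified above for the new ones), so this containment is an equality, and then $A_{10\dft(2)}=\{3^k n:n\in B_{10\dft(2)},\ k\ge 0\}$.

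It remains to verify the stated complexities and the stability assertion. For each new leader $n$ an upper bound on $\cpx{n}$ is read off from the displayed factorization or from $n=(n-1)+1$; the matching lower bound follows because a complexity one smaller would give $\dft(n)<\dft(2)$, forcing $n\in A_{\dft(2)}=\{3^j:j\ge 1\}$ (Proposition~\ref{basecase}), which none of these numbers is; the same reasoning, or Lemmas~\ref{multlem} and~\ref{addlem}, also gives $\cpx{3^kn}=3k+\cpx{n}$ for all $k\ge 0$. These values match the entries of Theorem~\ref{computeresult}: $512=2^9$, $56=7\cdot2^3$, $80=5\cdot2^4$, $38=19\cdot2$, $13$, $55$ and $3^n+1$ realize items (2), (4), (3), (5), (6), (9) and (7) respectively, while $1$ is item (8). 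Finally, each such $n>1$ is stable: otherwise some $3^kn$ would satisfy $\dft(3^kn)\le\dft(n)-1<10\dft(2)-1<\dft(2)$, so $3^kn$ would lie in $A_{\dft(2)}$ and hence be a power of $3$, which it is not. The only genuine labor is the product enumeration in case (1) and the bookkeeping of which $3^kc+1$ have already been excluded in case (2); both are routine given the data accumulated in the earlier propositions, and this is where essentially all the effort of the proof goes.
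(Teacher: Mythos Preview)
Your proposal is correct and follows essentially the same template as the paper: apply Theorem~\ref{themethod} with $\alpha=\dft(2)$ and $k=9$, discard cases (3) and (5) because $B_{\dft(2)}=\{3\}$ forces a good factor of $3$, reduce case (2) to $b=1$, enumerate the resulting candidates, and prune. One small inaccuracy: you assert that for every leader $c\in B_{9\dft(2)}$ the defects of $3^kc+1$ ``have already been computed or bounded in the observations following the earlier propositions,'' but for the four leaders $256,28,40,19$ that first appear in $B_{9\dft(2)}\setminus B_{8\dft(2)}$ those computations have \emph{not} yet been done; the paper carries them out here via Lemma~\ref{addlem} (finding $\cpx{256\cdot3^n+1}=17+3n$, $\cpx{28\cdot3^n+1}=11+3n$, $\cpx{40\cdot3^n+1}=12+3n$, and $\cpx{19\cdot3^n+1}=10+3n$ for $n\ge 1$) and checks that the resulting defects exceed $12\dft(2)$. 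Your lower-bound argument for the complexities of the new leaders---observing that dropping the complexity by one would force the defect below $\dft(2)$ and hence force a power of $3$---is a tidy alternative to the paper's use of Lemmas~\ref{multlem} and~\ref{addlem}, and it works because $10\dft(2)-1<\dft(2)$.
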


\begin{proof}
By the main theorem and the above computations,
\begin{eqnarray*}
B_{10\dft(2)}\setminus B_{9\dft(2)} & \subseteq & \{512,13,1,56,80,55,38\}
\cup \{3\cdot3^n+1:n\ge 3\}\cup \\ & &
\{256\cdot3^n+1 : n\ge 0\}\cup\{28\cdot3^n+1 : n\ge 0\}\cup\\ & &
\{40\cdot3^n+1 : n\ge 0\}\cup\{19\cdot3^n+1 : n\ge 0\}.
\end{eqnarray*}
We know $\dft(1)=1$.  Lemma~\ref{addlem} shows that $\cpx{256\cdot3^n+1}=17+3n$,
$\cpx{28\cdot3^n+1}=11+3n$, $\cpx{40\cdot3^n+1}=12+3n$, and for $n\ge 1$,
$\cpx{19\cdot3^n+1}=10+3n$.  Using this to check their defects, we see that none
of these lie in $A_{10\dft(2)}$, or even $A_{12\dft(2)}$.  Finally, checking the
complexities of $512\cdot3^k$, $13\cdot3^k$, $56\cdot3^k$, $80\cdot3^k$,
$55\cdot3^k$, $38\cdot3^k$, and $(3^{n+1}+1)3^k$ can be done via
Lemma~\ref{multlem} (for $512$, $56$, $80$, and $38$) and Lemma~\ref{addlem}
(for $13$, $55$ and $3^{n+1}+1$).
\end{proof}
\begin{prop}
\begin{eqnarray*}
B_{11\dft(2)} & = & B_{10\dft(2)}\cup\{1024,26,112,37,160,110,76\}\cup\\
& &\{2(3\cdot3^n+1):n\ge 3\}\cup\{2\cdot3^n+1:n\ge 4\},
\end{eqnarray*}
and the elements of $A_{11\dft(2)}$ have the complexities listed in
Theorem~\ref{computeresult}.
\end{prop}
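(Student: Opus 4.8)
The plan is to carry out the same inductive step as in the preceding propositions, now with $\alpha=\dft(2)$ and $k=10$ in Theorem~\ref{themethod}: first produce an explicit superset of $B_{11\dft(2)}\setminus B_{10\dft(2)}$, then prune the candidates whose defect fails to be below $11\dft(2)$ using Lemmas~\ref{multlem} and~\ref{addlem} and the defect values tabulated in the earlier steps, and finally certify that the surviving numbers, and their multiples by powers of $3$, have the complexities claimed in Theorem~\ref{computeresult}.

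First I would specialize Theorem~\ref{themethod}. Since $\alpha=\dft(2)<1$, $B_\alpha=\{3\}$ and $T_\alpha=\{1,2,3\}$, so cases~(3) and~(5) yield only numbers divisible by $3$ that carry the factor $3$ inside a good factorization, hence non-leaders, and case~(4) yields only $1,2,3\in B_{10\dft(2)}$. Thus any $n\in B_{11\dft(2)}\setminus B_{10\dft(2)}$ arises from case~(1), i.e.\ a good factorization $n=uv$ with $u\in B_{i\dft(2)}$, $v\in B_{j\dft(2)}$, $i+j=12$, $2\le i,j\le10$, or from case~(2), i.e.\ $n=a+b$ most-efficiently with $a\in A_{10\dft(2)}$ and $b\le a$ solid; as the text already notes, $(k+1)\alpha+3\log_3 2<12\dft(2)+3\log_3 2<4$ forces $\cpx{b}\le 3$, hence $b=1$. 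Plugging in the sets $B_{j\dft(2)}$ ($j\le10$) and $A_{10\dft(2)}$ from Proposition~\ref{level10} and computing exact defects (the factorizations being good), one finds: in case~(1) every product is either already in $B_{10\dft(2)}$, or has defect at least $11\dft(2)$ (for instance $5\cdot5=25$, with $\dft(25)=2\dft(5)>11\dft(2)$ by Lemma~\ref{multlem}), or equals $2m$ for some $m\in B_{10\dft(2)}\setminus B_{9\dft(2)}=\{512,13,1,56,80,55,38\}\cup\{3\cdot3^n+1:n\ge3\}$, giving $1024,26,112,160,110,76$ and $\{2(3\cdot3^n+1):n\ge3\}$; in case~(2), $n=3^km+1$ for $m$ a leader of $B_{10\dft(2)}$, and the tabulated estimates (in particular $10\dft(2)<\dft(2\cdot3^n+1)<11\dft(2)$ for $n\ge4$ and $10\dft(2)<\dft(37)<11\dft(2)$) leave, after pruning, only $37$, the family $\{2\cdot3^n+1:n\ge4\}$, and the families $m\cdot3^n+1$ built on leaders $m$ first added at level $9$ or $10$.

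The pruning of these last families is where Lemma~\ref{addlem} does the work: for $m\in\{256,28,40,19,512,13,56,80,55,38\}$ one checks (from the known description of the relevant lower $A_{s\dft(2)}$, read off from the base-$3$ expansion) that $m\cdot3^n+1$ is not already in some $A_{s\dft(2)}$, so that Lemma~\ref{addlem} gives $\cpx{m\cdot3^n+1}=\cpx{m}+3n+1$ and hence
\[\dft(m\cdot3^n+1)=\dft(m)+1-3\log_3\!\bigl(1+m^{-1}3^{-n}\bigr),\]
which exceeds $12\dft(2)$ for every such $m$, since $\dft(m)\ge 8\dft(2)$, $8\dft(2)+1>12\dft(2)$, and the correction term is small; this is exactly the phenomenon flagged in the text after the level-$7$ step. (The leftover numbers such as $8\cdot3^n+1$, $16\cdot3^n+1$ were already seen to have defect above $12\dft(2)$, except for $25,73,17$, which exceed $11\dft(2)$ and so first enter at level $12$.) What remains is precisely $\{1024,26,112,37,160,110,76\}\cup\{2(3\cdot3^n+1):n\ge3\}\cup\{2\cdot3^n+1:n\ge4\}$; each of these is a leader (none is divisible by $3$) with defect below $11\dft(2)$: for the $2m$ products $\dft(2m)=\dft(2)+\dft(m)<\dft(2)+10\dft(2)=11\dft(2)$; for $37$ and $2\cdot3^n+1$ ($n\ge4$) the value lies in $(10\dft(2),11\dft(2))$ by the tabulated estimates; and $\dft\bigl(2(3\cdot3^n+1)\bigr)=\dft(2)+\dft(3^{n+1}+1)\in(10\dft(2),11\dft(2))$ since $\dft(3^{n+1}+1)\in(9\dft(2),10\dft(2))$ for $n\ge3$. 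This gives the displayed equality for $B_{11\dft(2)}$.

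Finally I would certify complexities. The members of $A_{11\dft(2)}$ coming from the new leaders are the numbers $3^k\cdot1024,\ 3^k\cdot26,\ 3^k\cdot112,\ 3^k\cdot160,\ 3^k\cdot110,\ 3^k\cdot76,\ 3^k\cdot37,\ 3^k(2\cdot3^n+1),\ 3^k\cdot2(3\cdot3^n+1)$; writing these as $2^a3^k$, $13\cdot2\cdot3^k$, $7\cdot2^4\cdot3^k$, and so on, and applying Theorem~\ref{th1} (for the pure powers of $2$) together with repeated use of Lemmas~\ref{multlem} and~\ref{addlem} --- for instance $\cpx{37\cdot3^k}=\cpx{36}+3k+1=11+3k$ via Lemma~\ref{addlem} with $a=36$, and $\cpx{26\cdot3^k}=\cpx{2}+\cpx{13\cdot3^k}=10+3k$ via Lemmas~\ref{multlem} and~\ref{addlem} --- pins each complexity down to the value predicted by the corresponding form (2),(3),(4),(5),(6),(7),(9) or~(10) of Theorem~\ref{computeresult}, and in particular shows each new leader is stable. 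Together with the description of $A_{10\dft(2)}$, this verifies the complexity claim for all of $A_{11\dft(2)}$. I expect the only real difficulty to be bookkeeping: confirming that the candidate list coming out of Theorem~\ref{themethod} is genuinely exhaustive --- in particular that all iterated ``multiply-then-add-$1$'' and ``add-$1$-then-multiply'' products, and all second-generation families such as $(3\cdot3^j+1)\cdot3^n+1$, have been accounted for and shown to lie above $11\dft(2)$ --- and that every pruned candidate has been certified to lie outside the appropriate already-known $A_{s\dft(2)}$, so that the two pruning lemmas genuinely apply; beyond this there is nothing conceptually new relative to the earlier levels.
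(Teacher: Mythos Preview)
Your proposal is correct and follows essentially the same route as the paper: apply Theorem~\ref{themethod} with $\alpha=\dft(2)$ and $k=10$, reduce cases (3)--(5) to nothing new, enumerate the case~(1) products and case~(2) sums $a+1$ with $a\in A_{10\dft(2)}$, prune via Lemmas~\ref{multlem} and~\ref{addlem}, and certify the survivors. The only mild difference is organizational: the paper lists the second-generation family $\{(3\cdot3^n+1)3^m+1:n\ge3,\ m\ge0\}$ explicitly in its candidate superset and disposes of it by a direct defect computation, whereas you relegate it to the final ``bookkeeping'' caveat; since the leaders $3^{n+1}+1$ ($n\ge3$) were newly added at level~10, this family genuinely has to be checked at this step, so be sure it is handled and not merely flagged.
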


\begin{proof}
By the main theorem and the above computations,
\begin{eqnarray*}
B_{11\dft(2)}\setminus B_{10\dft(2)} & \subseteq &
\{1024,26,112,37,160,110,76,25\} \cup \\ & & \{2(3\cdot3^n+1):n\ge 3\}\cup
\{2\cdot3^n+1:n\ge4\} \cup \\ & & \{512\cdot3^n+1 : n\ge 0\}\cup
\{13\cdot3^n+1 : n\ge 0\} \cup \\ & & \{56\cdot3^n+1 : n\ge 0\}\cup
\{80\cdot3^n+1 : n\ge 0\} \cup \\ & & \{55\cdot3^n+1 : n\ge 0\}\cup
\{38\cdot3^n+1 : n\ge 0\}\cup \\ & & \{(3\cdot3^n+1)3^m+1: n\ge3, m\ge 0\}
\end{eqnarray*}
Lemma~\ref{addlem} shows that for $m\ge3$,
$\cpx{(3^{m+1}+1)3^n+1}=2+3(m+1)+3n$, and that for $n\ge 1$,
$\cpx{512\cdot3^n+1}=19+3n$, $\cpx{56\cdot3^n+1}=13+3n$,
$\cpx{80\cdot3^n+1}=14+3n$, $\cpx{55\cdot3^n+1}=13+3n$,
$\cpx{38\cdot3^n+1}=12+3n$, and that for $n\ge 2$, $\cpx{13\cdot3^n+1}=9+3n$.
Using this to check their defects, we see that none of these lie in
$A_{11\dft(2)}$, or even $A_{12\dft(2)}$.
We checked earlier that $\dft(25)>11\dft(2)$.
Finally, checking the complexities of $1024\cdot3^k$, $26\cdot3^k$,
$112\cdot3^k$, $37\cdot3^k$, $160\cdot3^k$, $110\cdot3^k$, $76\cdot3^k$,
$2(3^{n+1}+1)3^k$, and $(2\cdot3^n+1)3^k$ can be done via Lemma~\ref{multlem}
(for $1024$, $26$, $112$, $160$, $110$, $76$, and $2(3^{n+1}+1)$) and
Lemma~\ref{addlem} (for $37$ and $2\cdot3^n+1$).
\end{proof}
\begin{prop}
\begin{eqnarray*}
B_{12\dft(2)} & = & B_{11\dft(2)}\cup\{2048,25,52,224,74,320,17,220,152,73\}
\cup\\ & & \{4(3\cdot3^n+1):n\ge 3\}\cup\{2(2\cdot3^n+1):n\ge4\} \cup \\
& & \{4\cdot3^n+1:n\ge3\}
\end{eqnarray*}
and the elements of $A_{12\dft(2)}$ have the complexities listed in
Theorem~\ref{computeresult}.
\end{prop}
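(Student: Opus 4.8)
The plan is to proceed exactly as in the preceding propositions, applying Theorem~\ref{themethod} with $\alpha=\dft(2)$ and $k=11$, so that $(k+1)\alpha=12\dft(2)$. This produces an explicit superset of $B_{12\dft(2)}\setminus B_{11\dft(2)}$. Case (1) of the theorem contributes good factorizations $n=uv$ with $u\in B_{i\dft(2)}$, $v\in B_{j\dft(2)}$, $i+j=13$, $2\le i,j\le 11$, so one enumerates the finitely many products of pairs drawn from the already-determined sets $B_{2\dft(2)},\dots,B_{11\dft(2)}$. Case (2) forces $b=1$, since any solid $b$ with $\cpx{b}<12\dft(2)+3\log_3 2=3.179\ldots$ must equal $1$; hence $n=a+1$ with $a\in A_{11\dft(2)}$, a family already described by the previous proposition. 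Cases (3)--(5) only reintroduce nontrivial multiples of $3$ and the numbers $1,2,3$, which are either not leaders or already in $B_{11\dft(2)}$. Combining these gives a finite list of ``exceptional'' candidates together with finitely many arithmetic families of the shape $c\cdot 3^n+1$ and $3^m(c\cdot 3^n+1)$.

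First I would prune. Any candidate of the form $c\cdot 3^n+1$ (or a power of $3$ times such) has had its defect computed, or bounded, in the observations recorded after the earlier propositions; discarding those whose defect is $\ge 12\dft(2)$ leaves only the families $\{4(3\cdot3^n+1):n\ge3\}$, $\{2(2\cdot3^n+1):n\ge4\}$, $\{4\cdot3^n+1:n\ge3\}$ and the isolated numbers $25=8\cdot3+1$, $17=16+1$, $73=8\cdot9+1$. Any product that is a nontrivial multiple of $3$ arising from a good factorization is not a leader and is dropped per the convention established earlier. Among the remaining products one is left with the powers of $2$ up to $2^{11}=2048$ together with $52=4\cdot13$, $224=2^5\cdot7$, $74=2\cdot37$, $320=2^6\cdot5$, $220=2^2\cdot5\cdot11$, $152=2^3\cdot19$, yielding exactly the asserted list.

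Next I would certify that each surviving number genuinely lies in $A_{12\dft(2)}$ and compute its complexity. For the multiplicative candidates this is Lemma~\ref{multlem}: writing $n=ab$ with $a,b$ in the appropriate $A_{i\dft(2)},A_{j\dft(2)}$ and checking $n\notin A_{11\dft(2)}$ (read off from the base-$3$ expansion) gives $\cpx{n}=\cpx{a}+\cpx{b}$, hence $\dft(n)=\dft(a)+\dft(b)<12\dft(2)$ by the recorded defect values; this handles $2048$, $52$, $224$, $74$, $320$, $220$, $152$, and the families $4(3\cdot3^n+1)$ and $2(2\cdot3^n+1)$ (using $\dft(4)=2\dft(2)$ together with the recorded ranges $\dft(3^{n+1}+1)\in(9\dft(2),10\dft(2))$ for $n\ge3$ and $\dft(2\cdot3^n+1)\in(10\dft(2),11\dft(2))$ for $n\ge4$). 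For $25$, $17$, $73$ and the family $4\cdot3^n+1$, $n\ge3$, one instead applies Lemma~\ref{addlem}, obtaining $\cpx{4\cdot3^n+1}=5+3n$, $\cpx{17}=9$, $\cpx{25}=10$, $\cpx{73}=13$, with defects all in $(11\dft(2),12\dft(2))$ as already observed. Finally, as preparation for later levels one records that every new $+1$-type family introduced here has defect exceeding $12\dft(2)$ after one further multiplication by $3$ and addition of $1$.

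The hard part is not any single deep step but the completeness of the enumeration and pruning: one must verify that every product $uv$ permitted by case (1) has been considered, that each has been correctly classified as a non-leader or assigned a defect, and that the defect ranges inherited from earlier levels are sharp enough to decide membership in $A_{12\dft(2)}$ in every borderline case (in particular to separate $\dft(25)$, $\dft(73)$, $\dft(17)$ from $12\dft(2)$). The choice $\alpha=\dft(2)$ is precisely what keeps this bookkeeping finite and makes the pruning exact at each level.
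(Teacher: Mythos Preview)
Your approach is exactly the paper's: apply Theorem~\ref{themethod} with $\alpha=\dft(2)$ at $k=11$, enumerate candidates, prune via previously recorded defect ranges, then certify the survivors with Lemmas~\ref{multlem} and~\ref{addlem}. The structure and all the verification steps you describe are correct.

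There is one concrete omission in your enumeration of case~(1) products. You assert that ``among the remaining products one is left with'' exactly the numbers in the claimed list, but you miss $35=5\cdot 7$. Since $5\in B_{6\dft(2)}$ and $7\in B_{7\dft(2)}$ with $6+7=13$, the product $35$ is a legitimate case~(1) candidate, and nothing recorded in the earlier observations eliminates it in advance. The paper's own proof explicitly carries $35$ in its superset and then disposes of it separately by checking $\dft(35)>12\dft(2)$ (numerically, $\cpx{35}=11$ gives $\dft(35)=11-3\log_3 35\approx 1.292$, just above $12\dft(2)\approx 1.286$). This is precisely one of the ``borderline cases'' you warn about in your final paragraph, yet it does not appear anywhere in your enumeration or your list of defects to separate from $12\dft(2)$. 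Without it the argument is incomplete, since the containment $B_{12\dft(2)}\setminus B_{11\dft(2)}\subseteq\{\text{your list}\}$ is simply not established.
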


\begin{proof}
By the main theorem and the above computations,
\begin{eqnarray*}
B_{12\dft(2)}\setminus B_{11\dft(2)} & \subseteq &
\{2048,25,52,224,74,320,17,220,152,73,35\} \cup \\ & &
\{4(3\cdot3^n+1):n\ge 3\}\cup \{2(2\cdot3^n+1):n\ge4\}\cup
\\ & & \{4\cdot3^n+1:n\ge3\}
\cup\{1024\cdot3^n+1 : n\ge 0\}\cup\\ & &\{26\cdot3^n+1 : n\ge 0\}
\cup\{112\cdot3^n+1 : n\ge 0\}\cup \\ & & \{37\cdot3^n+1:n\ge 0\}
\cup\{160\cdot3^n+1 : n\ge 0\}\cup \\ & &
\{110\cdot3^n+1 : n\ge 0\}\cup \{76\cdot3^n+1 : n\ge 0\}\cup \\ & &
\{2(3\cdot3^n+1)3^m+1: n\ge3, m\ge 0\}\cup \\ & &
\{(2\cdot3^n+1)3^m+1: n\ge4, m\ge 0\}
\end{eqnarray*}
Lemma~\ref{addlem} shows that for
$m\ge3$ and $n\ge1$, $\cpx{2(3^{m+1}+1)3^n+1}=4+3(m+1)+3n$, and that for $m\ge
4$ and $n\ge 1$, $\cpx{(2\cdot3^m+1)3^n+1}=4+3m+3n$, and that
$\cpx{1024\cdot3^n+1}=21+3n$, $\cpx{112\cdot3^n+1}=15+3n$,
$\cpx{160\cdot3^n+1}=16+3n$, $\cpx{76\cdot3^n+1}=14+3n$, and that for $n\ge 1$,
$\cpx{26\cdot3^n+1}=11+3n$, $\cpx{110\cdot3^n+1}=15+3n$, and that for $n\ge 2$,
$\cpx{37\cdot3^n+1}=12+3n$.  Using this to check their defects, we see that none
of these lie in $A_{12\dft(2)}$.
We can then check that $\dft(35)>12\dft(2)$.
Finally, checking the complexities of $2048\cdot3^k$, $25\cdot3^k$,
$52\cdot3^k$, $224\cdot3^k$, $74\cdot3^k$, $320\cdot3^k$, $220\cdot3^k$,
$152\cdot3^k$, $73\cdot3^k$, $4(3^{n+1}+1)3^k$, $2(2\cdot3^n+1)3^k$, and
$(4\cdot3^n+1)3^k$ can be done via Lemma~\ref{multlem} (for $2048$, $25$, $52$,
$224$, $74$, $320$, $220$, $152$, $4(3^{n+1}+1)$, and $2(2\cdot3^n+1)$) and
Lemma~\ref{addlem} (for $25$, $17$, $73$, and $4\cdot3^n+1$).
\end{proof}

Combining all these propositions establishes Theorem~\ref{computeresult}.

\section{Applications}
\label{theory}

We now present several applications of the classification obtained
in Section 5. These are:
(i) Stability of numbers $n>1$ of defect less than $12\dft(2)+1$;
(ii) Classification of all integers $n$ having defect $0 \le \dft(n) \le 1$
and finiteness of $B_{r}$ for all $r<1$;
(iii) Determination of complexities $\cpx{2^{a}\cdot 3^k}$ for $a \le 21$
and all $k$;
(iv) Upper bounds on the number of integers $n \le x$ having complexity
$\dft(n) < r$, for any fixed $r>0$.

\subsection{Stability of numbers of low defect}

We have already noted in Theorem~\ref{computeresult} that numbers $n>1$ of
defect less than $12\dft(2)$ are stable.  In fact, we can conclude something
stronger.

\begin{thm}
If $n>1$ and $\dft(n)<12\dft(2)+1=2.2865\ldots$, then $n$ is stable.
\end{thm}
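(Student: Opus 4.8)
The plan is to push $n$ up the chain $n, 3n, 9n,\dots$ until the defect first strictly decreases, to argue that this first decrease already lands the value inside the already-classified range $A_{12\dft(2)}$ and that the value reached there is a leader, and finally to read off a contradiction from the explicit description of $B_{12\dft(2)}$ assembled in Section~\ref{appcomp}, which contains no multiple of $3$ besides $3$ itself.

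In detail: suppose for contradiction that $n>1$, $\dft(n)<12\dft(2)+1$, and $n$ is unstable. By Theorem~\ref{cj1}(1) there is a smallest $K\ge 0$ with $3^K n$ stable, and $K\ge 1$ because $n$ is not stable. The first step is to check that $\dft(3^K n)<\dft(3^{K-1}n)$. Indeed $\dft(3^j n)$ is non-increasing in $j$ and $3^K n$ is stable, so if equality held here then $\dft$ would be constant along $3^{K-1}n, 3^K n, 3^{K+1}n,\dots$, whence Proposition~\ref{stabdft} would force $3^{K-1}n$ to be stable, contradicting the minimality of $K$. Next, since $\dft(3^{K-1}n)-\dft(3^K n)=\cpx{3^{K-1}n}+3-\cpx{3^K n}$ is a positive integer, it is $\ge 1$, which yields simultaneously $\dft(3^K n)\le \dft(3^{K-1}n)-1\le \dft(n)-1<12\dft(2)$ and $\cpx{3^K n}<3+\cpx{3^{K-1}n}=3+\cpx{(3^K n)/3}$. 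Thus $3^K n$ is a leader lying in $A_{12\dft(2)}$, i.e. $3^K n\in B_{12\dft(2)}$.

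The last step is a finite inspection. Running through the description of $B_{12\dft(2)}$ built up in the propositions of Section~\ref{appcomp} (equivalently, picking out the leaders among the forms in Theorem~\ref{computeresult}), every element other than $3$ is either $1$, or a power of $2$, or $2^a$ times one of $5,7,13,17,19,25,37,55,73$, or of the form $3^m+1$, $2\cdot 3^n+1$ or $4\cdot 3^n+1$, or twice or four times such a number, and none of these is divisible by $3$. Hence $3^K n=3$, which is impossible since $K\ge 1$ and $n>1$ force $3^K n\ge 6$. The only delicate point in the argument is the first-drop step: one must use the minimality of $K$ (not merely that $n$ is unstable), together with the integrality of each difference $\dft(3^{j-1}n)-\dft(3^j n)$, to be certain that the very first decrease along the chain already drops the defect below $12\dft(2)$ and that the resulting number is a leader. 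Granting this, everything else reduces to the bookkeeping already carried out in Section~\ref{appcomp}.
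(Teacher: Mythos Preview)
Your proof is correct and follows essentially the same route as the paper: assume $n>1$ is unstable with $\dft(n)<12\dft(2)+1$, use the integrality of defect drops to land some $3^K n$ inside $A_{12\dft(2)}$, and then read off a contradiction from the explicit classification. The only cosmetic difference is in how the contradiction is extracted: the paper checks from Theorem~\ref{computeresult} that $3m\in A_{12\dft(2)}$ forces $m\in A_{12\dft(2)}$, iterates this back to $n$, and then quotes the stability clause of that theorem, whereas you stay at $3^K n$, note it is a leader, and observe directly that $B_{12\dft(2)}$ contains no multiple of $3$ other than $3$ itself---an equivalent reformulation of the same inspection.
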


\begin{proof}
 From Theorem~\ref{computeresult}, we can check that if $\dft(3n)<12\dft(2)$,
then $\dft(n)<12\dft(2)$.  So suppose the theorem were false, and we have
unstable $n>1$ with $\dft(n)<12\dft(2)+1$.  Then for some $K$, $\dft(3^K n)\le
\dft(n)-1<12\dft(2)$.  So by above, we have $\dft(n)<12\dft(2)$, and thus, as
noted in Theorem~\ref{computeresult}, $n$ is stable unless $n=1$.
\end{proof}

In fact, if $n>1$ and $\dft(n)<\dft(107)=3.2398\ldots$, then $n$ is stable,
as we will prove in \cite{seq2}.

\subsection{Classifying the integers of Defect at most $1$}
\label{sec61}

   Using  Theorem~\ref{computeresult} we can classify all the numbers with defect
less than $1$, as follows:

\begin{thm}
The natural numbers $n$ satisfying $\dft(n)<1$ are precisely those that can be
written in one of the following forms, and have the following complexities:
\begin{enumerate}
\item $3^k$ for $k\ge 1$, of complexity $3k$
\item $2^a 3^k$ for $a\le 9$, of complexity $2a+3k$ (for $a$, $k$ not both
zero)
\item $5\cdot2^a 3^k$ for $a\le 3$, of complexity $5+2a+3k$
\item $7\cdot2^a 3^k$ for $a\le 2$, of complexity $6+2a+3k$
\item $19\cdot3^k$ of complexity $9+3k$
\item $13\cdot3^k$ of complexity $8+3k$
\item $(3^n+1)3^k$ of complexity $1+3n+3k$ (for $n\ne0$)
\end{enumerate}
Furthermore $n=1$ is the only number having defect exactly $1$.
\end{thm}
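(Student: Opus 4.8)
The plan is to derive this classification directly from the Classification Theorem~\ref{computeresult}. Since $12\dft(2)\approx 1.286>1$, we have $A_{1}\subseteq A_{12\dft(2)}$, so every $n$ with $\dft(n)<1$ is one of the thirteen forms listed in Theorem~\ref{computeresult} and has the complexity recorded there. The proof then consists of: (i) writing, for each of those thirteen forms, the defect of an arbitrary member in closed form; (ii) solving the inequality $\dft<1$ for the parameters; (iii) collecting the survivors, reconciling redundancies, into the seven forms claimed; and (iv) disposing of the ``defect exactly $1$'' assertion. For step (i) the key observation is that if a member of one of these families is written $c\cdot 2^{a}3^{k}$ with complexity $\cpx{c}+2a+3k$, then $\dft(c\cdot 2^{a}3^{k})=\dft(c)+a\dft(2)$, since powers of $3$ contribute nothing and each factor of $2$ contributes exactly $\dft(2)$.

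First I would handle families (1)--(6) and (9)--(13). Family (1) always has defect $0$. Family (2) has $\dft(2^{a}3^{k})=a\dft(2)$, which is $<1$ exactly for $a\le 9$. Families (3), (4), (5), (6) have defects $\dft(5)+a\dft(2)$, $\dft(7)+a\dft(2)$, $\dft(19)+a\dft(2)$, $\dft(13)+a\dft(2)$; a numerical check that $\dft(5),\dft(7),\dft(19),\dft(13)$ are all less than $1$ while $\dft(5)+4\dft(2)$, $\dft(7)+3\dft(2)$, $\dft(19)+\dft(2)$, $\dft(13)+\dft(2)$ are all greater than $1$ pins down the ranges $a\le 3$, $a\le 2$, $a=0$, $a=0$ respectively, matching items (3)--(6). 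For families (9)--(13) one checks that $\dft(55),\dft(37),\dft(25),\dft(17),\dft(73)$ each already exceed $1$, so none of these contribute; and family (8) is $n=1$ with $\dft(1)=1\not<1$, also excluded.

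The one family needing care is (7). For $n=2^{a}(2^{b}3^{l}+1)3^{k}$ with complexity $2(a+b)+3(l+k)+1$, a short computation gives
\[
\dft(n)=(a+b)\dft(2)+1-3\log_{3}\bigl(1+2^{-b}3^{-l}\bigr),
\]
so $\dft(n)<1$ iff $2^{-b}3^{-l}>3^{(a+b)\dft(2)/3}-1$. When $a=b=0$ this holds for every $l\ge 1$, producing the numbers $(3^{l}+1)3^{k}$ of complexity $1+3l+3k$ (item (7)); when $a+b\ge 1$ it holds only for finitely many triples $(a,b,l)$, and for each of those the resulting $n$ already lies in one of families (1)--(6) with $a$ in the restricted range just found (for instance $(a,b,l)=(1,0,2)$ gives $20\cdot 3^{k}=5\cdot 2^{2}3^{k}$, an item-(3) number with $a=2$). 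Reconciling all survivors — and recording that, as with Theorem~\ref{computeresult}, the resulting list is deliberately redundant, e.g.\ $(3^{1}+1)3^{k}=2^{2}3^{k}$ recurs in item (2) — yields exactly the seven claimed forms with the stated complexities.

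For the final assertion, suppose $\dft(n)=1$. Since $1$ is the smallest natural number it is a leader (Proposition~\ref{1stofdft}(1)); if $n\ne 1$, then Proposition~\ref{eqdefect}(2) forces $n=3^{k}$ for some $k\ge 1$, but $\dft(3^{k})=0\ne 1$, a contradiction. Hence $n=1$ is the unique integer of defect exactly $1$. I do not expect any serious obstacle: the only real work is the bookkeeping in step (iii) — verifying that the restricted ranges for families (2)--(6), the new infinite family $(3^{l}+1)3^{k}$, and the finitely many leftover cases of family (7) assemble into precisely the stated list with no omissions or spurious entries — together with the handful of elementary inequalities comparing $\dft(c)$ and $\dft(c)+a\dft(2)$ with $1$ that locate each threshold.
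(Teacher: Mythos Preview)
Your proposal is correct and follows essentially the same approach as the paper: both derive the result by restricting the Classification Theorem~\ref{computeresult} to defect below~$1$. The only difference is that the paper exploits the sharper inclusion $A_1\subseteq A_{10\dft(2)}$ (since $9\dft(2)<1<10\dft(2)$) via the intermediate Propositions on $B_{9\dft(2)}$ and $B_{10\dft(2)}$, so it need only exclude the four leaders $56,80,55,38$ (and $1$) from $B_{10\dft(2)}\setminus B_{9\dft(2)}$, whereas you use the looser $A_1\subseteq A_{12\dft(2)}$ and therefore walk through all thirteen families; your treatment of family~(7) and of the ``defect exactly $1$'' assertion via Proposition~\ref{eqdefect} is fine.
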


\begin{proof}
This list includes all numbers in $A_{9 \dft(2)}$,
and some numbers in
$A_{10\dft(2)}$. These in turn are determined by the
corresponding lists for $B_{9 \dft(2)}, B_{10 \dft(2)}$,
in the latter case (Proposition \ref{level10}) checking the complexities to
exclude the leaders
$\{56, 80, 55, 38\}$.
\end{proof}
Using  this list one
may deduce the following important fact.
\begin{thm}
\label{finite}
For every $0< \alpha<1$, the set of leaders $B_\alpha$ is a finite set.
For every $\alpha \ge 1$, the set $B_{\alpha}$ is an infinite set.
\end{thm}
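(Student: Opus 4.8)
The plan is to read both halves of the statement off the classification of numbers of defect below $1$ proved just above, together with Proposition~\ref{1stofdft}, which says a leader is exactly the smallest integer of its defect.

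For the finiteness assertion, fix $0<\alpha<1$; since $\dft(n)<\alpha$ forces $\dft(n)<1$, we have $B_\alpha\subseteq B_1$, so it suffices to list the leaders occurring among integers of defect $<1$ and to check that only finitely many of them have defect $<\alpha$. By the classification theorem, each such integer lies in a unique chain $\{3^k\ell:k\ge 0\}$, and regrouping the seven displayed (and redundant) forms into chains exhibits the leaders $\ell$ explicitly: they are $3$; the powers $2^a$ with $1\le a\le 9$; the numbers $5\cdot 2^a$ with $0\le a\le 3$; the numbers $7\cdot 2^a$ with $0\le a\le 2$; the numbers $19$ and $13$; and the numbers $3^n+1$ with $n\ge 1$. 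Each displayed number is indeed a leader: in every one of these families the defect depends only on the shown parameter and not on $k$ — for instance $\dft(2^a3^k)=a\,\dft(2)$ and $\dft((3^n+1)3^k)=1-3\log_3(1+3^{-n})$ — so the shown number is the least of its defect, and moreover $3\nmid 3^n+1$. Every family on this list is finite except $\{3^n+1:n\ge 1\}$, and for that one the identity $\dft(3^n+1)=1-3\log_3(1+3^{-n})$ shows that $\dft(3^n+1)<\alpha$ is equivalent to $3^{-n}>3^{(1-\alpha)/3}-1$; as $\alpha<1$ the right side is a fixed positive constant, so only finitely many $n$ qualify. Hence $B_\alpha$ is finite.

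For the infiniteness assertion, note that $B_1\subseteq B_\alpha$ whenever $\alpha\ge 1$, so it is enough to see that $B_1$ is already infinite. For every $n\ge 1$ the integer $3^n+1$ is a leader because $3\nmid 3^n+1$, and $\cpx{3^n+1}\le\cpx{3^n}+\cpx{1}=3n+1$ by Theorem~\ref{th1}, whence $\dft(3^n+1)\le 3n+1-3\log_3(3^n+1)<3n+1-3\log_3 3^n=1$. Thus $\{3^n+1:n\ge 1\}$ is an infinite subset of $B_1$, finishing the proof. The only delicate part of the whole argument is the bookkeeping in the first step, namely translating the redundant seven-form description of $A_1$ into the list of underlying chains; overlaps among the forms (such as $4=2^2=3^1+1$) only identify leaders with one another and so cannot hurt, and the entire conclusion then rests on the single observation that exactly one of the resulting families is infinite while its defects approach $1$ from below.
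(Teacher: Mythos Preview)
Your argument is correct and follows essentially the same approach as the paper: both read the result off the classification of numbers of defect below~$1$, noting that all families of leaders there are finite except $\{3^n+1:n\ge1\}$, and then use the computation $\dft(3^n+1)=1-3\log_3(1+3^{-n})\nearrow 1$ to conclude that only finitely many of these fall below any fixed $\alpha<1$ while all of them lie in $B_1$. Your write-up is more explicit than the paper's (you spell out the leader list and the inequality $3^{-n}>3^{(1-\alpha)/3}-1$), and for the infiniteness part you appeal only to the trivial upper bound $\cpx{3^n+1}\le 3n+1$ rather than the exact complexity, which is a slight simplification.
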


\begin{proof}
The first part follows from the fact that each of the categories above has a
finite set of leaders, and that the final list (7) has a finite number of
sublists
with defect smaller than $1- \epsilon$, for any epsilon.
The defects
$$
\dft((3^n+1)3^k) =
(3n+1) - 3 \log_3(3^n+1)
= 1-3 \log_3(1+ \frac{1}{3^n})
$$
approach $1$
from below as $n$
approaches infinity. This also establishes that $B_{1}$ is an
infinite set, giving the second part.
\end{proof}

\subsection{The complexity of $2^m3^k$ for small $m$}
\label{sec62}

The determination of $A_r$ in
Theorem \ref{computeresult}
allows us to put lower bounds on the complexities of any
numbers not in it.  Thus for instance we have the following result.

\begin{lem}
Let $n$ be a natural number and suppose that there is no $k$ such that
$2^{n+9}3^k\in A_{n\dft(2)}$.  Then for any $m\le n+9$ and any $k$ (with $m$
and $k$ not both zero), $\cpx{2^m 3^k}=2m+3k$.
\end{lem}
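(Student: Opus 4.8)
The plan is to proceed by induction on $m$, using the classification in Theorem~\ref{computeresult} to control the possible most-efficient representations of $2^m3^k$. First I would record the trivial bound $\cpx{2^m3^k}\le 2m+3k$, so the content is entirely the reverse inequality $\cpx{2^m3^k}\ge 2m+3k$, equivalently $\dft(2^m3^k)\ge m\dft(2)$. The base cases $m\le 11$ are already covered by Theorem~\ref{computeresult} (which lists $2^a3^k$ for $a\le 11$ with complexity $2a+3k$), so I may assume $12\le m\le n+9$ and that $\cpx{2^{m'}3^{k'}}=2m'+3k'$ has been established for all smaller $m'$.

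The key step is to take a most-efficient representation of $N:=2^m3^k$ and run Theorem~\ref{themethod} (with $\alpha=\dft(2)$). By Proposition~\ref{1stofdft} we may pass to the leader associated to $N$, which (since $N$ has only the prime factors $2$ and $3$) is $2^m3^{k'}$ for some $k'\le k$, again of defect $\dft(N)$; note $2^m 3^{k'}\in B_{(j+1)\dft(2)}$ for $j$ chosen minimally with $(j+1)\dft(2)>\dft(N)$, so we wish to rule out $\dft(N)<m\dft(2)$. Suppose for contradiction $\dft(N)<m\dft(2)$; since $m\le n+9$ this would in particular place $2^{n+9}3^{k''}$ in $A_{n\dft(2)}$ for a suitable $k''$ — wait, more carefully: from $\dft(2^m3^{k'})<m\dft(2)$ and multiplying by a power of $3$ one gets $2^m3^k\in A_{m\dft(2)}\subseteq A_{(n)\dft(2)}$; combined with $2^9$ extra factors of $2$ one would get, via Proposition~\ref{multdft}(2), that $2^{n+9}3^k$ has defect at most $\dft(2^m3^k)+(n+9-m)\dft(2)<n\dft(2)$, i.e.\ $2^{n+9}3^k\in A_{n\dft(2)}$, contradicting the hypothesis. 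So the hard part reduces to showing that \emph{every} most-efficient representation of $2^m3^{k'}$ forces $\dft\ge m\dft(2)$; equivalently, that the hypothesis on $2^{n+9}3^k$ really does propagate back down. I will instead argue directly: by Theorem~\ref{themethod} applied to the leader $2^m3^{k'}\in B_{(j+1)\dft(2)}$, it has a most-efficient representation of one of the forms (1)--(5). Forms (4) and (5) are impossible since $T_{\dft(2)}=\{1,2,3\}$ and $2^m3^{k'}>3$ for $m\ge 12$, with no good factorization having a factor in $\{1,2,3\}$ other than the forbidden $3m'$-type (which would contradict leadership) or a factor of $2$ — and a good factorization $2^m3^{k'}=2\cdot(2^{m-1}3^{k'})$ handled below. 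Forms (2) and (3) write $N=a+b$ (possibly times $v\in B_{\dft(2)}$) with $b=1$ (since $\cpx{b}\le 3$ for solid $b$ forces $b=1$ in this defect range), i.e.\ $2^m3^{k'}=3^c\cdot d$ with $d=a+1$ — but $2^m3^{k'}-1$ or its relevant factor is not of a shape making $\cpx{a}+1$ small enough, and more to the point such an $a$ would lie in $A_{j\dft(2)}\subseteq A_{n\dft(2)}$ and be of the form dividing $2^m3^{k'}$, again reducible to the hypothesis. Form (1) gives a good factorization $N=uv$ (or $uvw$), hence $u=2^{a_1}3^{b_1}$, $v=2^{a_2}3^{b_2}$ with $a_1+a_2=m$; by the induction hypothesis $\cpx{u}=2a_1+3b_1$ and $\cpx{v}=2a_2+3b_2$, so $\cpx{N}=\cpx{u}+\cpx{v}=2m+3k'$ as desired.

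I expect the genuine obstacle to be the additive cases (2) and (3): one must show that no most-efficient representation of a leader $2^m3^{k'}$ (for $m$ in the stated range) can arise as $a+1$ or $(a+1)v$ with $\dft(a)$ small enough, and the cleanest route is exactly the one the lemma's hypothesis is designed for — an additive representation would produce, after tacking on the missing factors of $2$ and $3$, a witness that $2^{n+9}3^k\in A_{n\dft(2)}$, contradicting the hypothesis. So the logical heart is a short "hypothesis propagation" argument: \emph{if} $2^m3^k$ had defect $<m\dft(2)$ via any of the five forms, \emph{then} $2^{n+9}3^k$ would have defect $<n\dft(2)$. Establishing this implication carefully (tracking the factor-of-$2$ bookkeeping through Proposition~\ref{multdft}(2) and Lemma~\ref{addlem} in the additive case) is the main work; everything else is bounded-case checking against Theorem~\ref{computeresult}.
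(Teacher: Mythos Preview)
Your proposal massively overcomplicates what is in the paper a two-line argument. The paper simply observes that the hypothesis $2^{n+9}3^k\notin A_{n\dft(2)}$ means $\dft(2^{n+9}3^k)\ge n\dft(2)$, hence
\[
\cpx{2^{n+9}3^k}\ \ge\ (n+9)\cdot 3\log_3 2 + 3k + n\dft(2)\ =\ 2n+3k+27\log_3 2\ >\ 2n+3k+17,
\]
and since complexity is an integer this gives $\cpx{2^{n+9}3^k}\ge 2(n+9)+3k$. The claim for smaller $m$ is then immediate from the trivial bound $\cpx{2^{n+9}3^k}\le\cpx{2^m3^k}+2(n+9-m)$.

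Your ``hypothesis propagation'' paragraph contains the germ of exactly this idea, but it has a genuine arithmetic gap. From the contradiction hypothesis $\dft(2^m3^k)<m\dft(2)$ you assert
\[
\dft(2^m3^k)+(n+9-m)\dft(2)<n\dft(2),
\]
which would require $\dft(2^m3^k)<(m-9)\dft(2)$ --- not what you have. The missing ingredient is integrality: since $\cpx{2^m3^k}-(2m+3k)\in\Z$, the inequality $\dft(2^m3^k)<m\dft(2)$ actually forces $\dft(2^m3^k)\le m\dft(2)-1$, and one then needs the numerical fact $9\dft(2)<1$ (equivalently $27\log_3 2>17$) to finish. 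That is precisely the content of the paper's computation, and it is where the ``$9$'' in ``$n+9$'' comes from.

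Everything involving Theorem~\ref{themethod}, the induction on $m$, the leader reduction, and the case analysis on representation types (1)--(5) is unnecessary here and, as you yourself acknowledge, does not cleanly handle the additive cases anyway. Drop it all: the lemma is a one-step numerical consequence of its hypothesis, not a structural result requiring the classification machinery.
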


\begin{proof}
It suffices to show that $\cpx{2^{n+9}3^k}>2n+3k+17$, but by assumption,
\[\cpx{2^{n+9}3^k}\ge(n+9)3\log_3 2+3k+n\dft(2)=2n+3k+27\log_3 2>2n+3k+17,\]
and we are done.
\end{proof}

This lemma immediately establishes Conjecture \ref{cj11} for $ a \le 21$.

\begin{proof}[Proof of Theorem \ref{th11main}.] From our classification, it is
straightforward to check that $2^{21} 3^k$
does not lie in $A_{12\dft(2)}$ for any $k$, so we can conclude that
for $m\le 21$ and any $k$, with $m$ and $k$ not both zero, $\cpx{2^m
3^k}=2m+3k$.
\end{proof}

\subsection{Counting the integers below $x$ having defect at most $r$}
\label{sec63}

In our computations in Section \ref{sec5}, we used a small step size
$\alpha=\dft(2)$, and kept our superset of $A_r$ small by using a pruning
step. In what follows, we will use a different trick to keep our supersets of
$A_r$ from getting too large. Instead of pruning, we will use step sizes
arbitrarily close to $1$.

\begin{prop}
\label{indcount}
Given any $0<\alpha<1$, and any $k\ge1$, we have that
$B_{k\alpha}(x)=O_{k\alpha}((\log x)^{k-1})$, and
$A_{k\alpha}(x)=O_{k\alpha}((\log x)^k)$.
\end{prop}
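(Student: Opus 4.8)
The plan is to prove both statements simultaneously by induction on $k$, using Theorem~\ref{themethod} as the inductive engine; the key point is that each of the five cases in that theorem either reduces the problem to smaller $k$ or produces at most one new "degree of freedom" (a power of $3$, a summand of bounded complexity, a factor from the finite set $T_\alpha$, or a multiplicative split), each of which contributes only a single extra factor of $\log x$. First I would dispose of the $A$ versus $B$ bookkeeping: since $A_{k\alpha}=\{3^j n: n\in B_{k\alpha},\,j\ge0\}$ and for $n\le x$ one has $j\le \log_3 x$, the bound $A_{k\alpha}(x)=O((\log x)\cdot B_{k\alpha}(x))$ is immediate, so it suffices to bound $B_{k\alpha}(x)$ by $O((\log x)^{k-1})$. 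The base case $k=1$ follows from Theorem~\ref{finite}: for $0<\alpha<1$ the set $B_\alpha$ is finite, so $B_\alpha(x)=O(1)=O((\log x)^0)$.

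For the inductive step, assume the result for all $j<k$ (and for all $0<\alpha<1$); I would then walk through the cases of Theorem~\ref{themethod} applied to $n\in B_{k\alpha}=B_{((k-1)+1)\alpha}$ (so playing the role of "$k$" in that theorem is $k-1\ge1$). In case (1) with $k-1\ge2$, $n=uv$ with $u\in B_{i\alpha}$, $v\in B_{j\alpha}$, $i+j=k+1$, $2\le i,j\le k-1$; by induction the number of choices for the pair $(u,v)$ with $uv\le x$ is $O((\log x)^{i-1})\cdot O((\log x)^{j-1})=O((\log x)^{i+j-2})=O((\log x)^{k-1})$, and there are only finitely many admissible $(i,j)$. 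In case (1) with $k-1=1$, $n$ is a product of two or three elements of the finite set $B_\alpha$, contributing only $O(1)$. In case (2), $n=a+b$ with $b$ solid and $\cpx{b}<k\alpha+3\log_3 2$; since $\alpha<1$ this bounds $\cpx{b}$, hence bounds $b$, so there are $O(1)$ choices for $b$, and $a\in A_{(k-1)\alpha}$ with $a\le x$ gives $O((\log x)^{k-1})$ by the inductive $A$-bound — so $O((\log x)^{k-1})$ total. Case (3), $n=(a+b)v$ with $v\in B_\alpha$: only $O(1)$ choices for $v$, and $a+b\le x$ falls under the case (2) count, giving again $O((\log x)^{k-1})$. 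Cases (4) and (5) involve $n\in T_\alpha$ or $n=uv$ with $u\in T_\alpha$, $v\in B_\alpha$; since $T_\alpha$ and $B_\alpha$ are both finite, these contribute $O(1)$. Summing the finitely many cases yields $B_{k\alpha}(x)=O((\log x)^{k-1})$, completing the induction, and then the $A$-bound follows as above.

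The main subtlety — and the step I would be most careful about — is making sure the recursion "bottoms out" correctly in the exponent count: in case (1) for $k-1\ge2$ the indices satisfy $i,j\le k-1$ so the inductive hypothesis genuinely applies and the exponents add to exactly $(i-1)+(j-1)=k-1$, with no off-by-one slippage, while in the additive cases (2)–(3) it is the $A$-bound at level $k-1$ (not the $B$-bound) that gets invoked, which is why proving the $A$ and $B$ statements in tandem is essential. One should also confirm that all the $O$-constants may legitimately be taken to depend on $k\alpha$ (equivalently on $k$ and $\alpha$), which is automatic since at each level only finitely many subcases and finitely many index pairs $(i,j)$ arise, and $T_\alpha$, $B_\alpha$ are finite for the fixed $\alpha<1$. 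No genuinely hard estimate is needed beyond Theorems~\ref{themethod} and~\ref{finite}; the content is purely the combinatorial accounting of how many $\log x$ factors each branch of the recursion can introduce.
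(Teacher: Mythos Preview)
Your proposal is correct and follows essentially the same approach as the paper: induct on $k$, use Theorem~\ref{finite} for the base case, and for the inductive step bound each of the cases of Theorem~\ref{themethod} separately, with the multiplicative splitting in case~(1) giving exponents $(i-1)+(j-1)=k-1$ and the additive cases~(2)--(3) invoking the $A$-bound at the previous level. The paper's proof differs only cosmetically (it writes out explicit inequalities for $B_{(k+1)\alpha}(x)$ rather than arguing case by case in words), and your observation that the $A$- and $B$-bounds must be carried in tandem is exactly the point.
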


\begin{proof}
We induct on $k$.  Suppose $k=1$; by Corollary \ref{finite}, then
$B_{k\alpha}=B_\alpha$ is a finite set, so $B_{k\alpha}(x)=O_{k\alpha}(1)$.
Also, for any $r$, $A_r(x)\le B_r(x)(\log_3 x)$; in particular,
$A_{k\alpha}(x)=O_{k\alpha}(\log x)$.

So suppose it is true for $k$ and we want to prove it for $k+1$; we apply
Proposition~\ref{themethod} with step size $\alpha$.  For convenience, let $S_r$
denote the set of solid numbers $b$ satisfying $\cpx{b}<r+3\log_3 2$, as
mentioned in the discussion after Theorem~\ref{themethod}; for any $r$, this is
a finite set.

In the case $k+1=2$,
\begin{eqnarray*}
B_{2\alpha}(x) & \le & B_\alpha(x)^3 +
(A_\alpha(x)|S_{2\alpha}| + |T_\alpha|)(|B_\alpha|+1) \\
& = & O_{\alpha}(1)^3 + O_\alpha(\log x) + O_\alpha(1) \\
& = & O_{(k+1)\alpha}(\log x).
\end{eqnarray*}
In the case $k+1>2$,
\small
\begin{eqnarray*}
B_{(k+1)\alpha}(x) & \le& \sum_{\substack{i+j=k+2 \\ i,j\ge2}} B_{i\alpha}(x)
B_{j\alpha}(x) + (A_{k\alpha}(x)|S_{(k+1)\alpha}| +
|T_\alpha|)(|B_\alpha|+1) \\
& =& \sum_{\substack{i+j=k+2 \\ i,j\ge2}} O_{i\alpha}((\log x)^{i-1})
O_{j\alpha}((\log x)^{j-1}) + O_{(k+1)\alpha}((\log x)^k) + O_\alpha(1) \\
& =& O_{k\alpha}((\log x)^k).
\end{eqnarray*}
\normalsize

In either case, we also have $A_{(k+1)\alpha}(x)=O_{(k+1)\alpha}((\log
x)^{k+1})$.  This completes the proof.

\end{proof}

Using this result we conclude:
\begin{thm}\label{upperbound}
For any number $r>0$, $B_r(x)=\Theta_r((\log x)^{\lfloor r \rfloor})$, and
$A_r(x)=\Theta_r((\log x)^{\lfloor r \rfloor+1})$.
\end{thm}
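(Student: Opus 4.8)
The plan is to prove the two matching bounds separately, obtaining the upper bound directly from Proposition~\ref{indcount} and the lower bound from an explicit lacunary family of products of numbers of the form $3^{k}+1$.

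\emph{Upper bounds.} Given $r>0$, put $m=\lfloor r\rfloor$ and $\alpha=r/(m+1)$. Since $\lfloor r\rfloor\le r<\lfloor r\rfloor+1$ we have $0<\alpha<1$ and $(m+1)\alpha=r$, so $B_r=B_{(m+1)\alpha}$ and $A_r=A_{(m+1)\alpha}$. Applying Proposition~\ref{indcount} with $k=m+1$ then gives $B_r(x)=O_r((\log x)^m)$ and $A_r(x)=O_r((\log x)^{m+1})$.

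\emph{Lower bound for $B_r$.} If $0<r<1$ then $m=0$; here $B_r\ne\emptyset$ (it contains $3$, which is a leader and has $\dft(3)=0<r$) and $B_r$ is finite by Theorem~\ref{finite}, so $B_r(x)=\Theta(1)$. Suppose now $r\ge1$, so $m\ge1$. For positive integers $k_1<k_2<\cdots<k_m$ with $k_{j+1}\ge 2k_j$ for each $j$, set
\[
N=N(k_1,\dots,k_m)=\prod_{i=1}^m\bigl(3^{k_i}+1\bigr).
\]
Because $\cpx{3^{k}+1}\le 3k+1$ and complexity is submultiplicative, $\cpx{N}\le 3\sum_i k_i+m$, whereas $3\log_3 N=3\sum_i k_i+3\sum_i\log_3(1+3^{-k_i})$; hence
\[
\dft(N)\le m-3\sum_{i=1}^m\log_3\!\bigl(1+3^{-k_i}\bigr)<m\le r,
\]
so $N\in A_r$. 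Since each factor is $\equiv 1\pmod 3$, $3\nmid N$, so $N$ is a leader and $N\in B_r$. The condition $k_{j+1}\ge 2k_j$ makes the sequence super-increasing, so the $2^m$ subset sums $\sum_{i\in S}k_i$ are pairwise distinct and $N=\sum_{S\subseteq\{1,\dots,m\}}3^{\sum_{i\in S}k_i}$ is the base-$3$ expansion of $N$ with all digits in $\{0,1\}$; from the set of nonzero digit positions one recovers $\{k_1,\dots,k_m\}$, so distinct tuples give distinct $N$. Finally $N<3^{\,m+\sum_i k_i}$, so $N\le x$ whenever $\sum_i k_i\le \log_3 x-m$, and an elementary count shows the number of admissible tuples with $\sum_i k_i\le Y$ is $\Omega_m(Y^m)$ (pick each $k_j$ in a suitable interval of length $\Omega_m(Y)$ lying above $2k_{j-1}$, chosen so the partial sums stay below $Y$). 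Hence $B_r(x)=\Omega_r((\log x)^m)$, and together with the upper bound, $B_r(x)=\Theta_r((\log x)^m)$.

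\emph{Lower bound for $A_r$.} If $n\in B_r$ and $n\le\sqrt x$, then $3^jn\in A_r$ and $3^jn\le x$ for all $0\le j\le\tfrac12\log_3 x$, and these integers are pairwise distinct; therefore $A_r(x)\ge \tfrac12(\log_3 x)\,B_r(\sqrt x)=\Omega_r((\log x)^{m+1})$ using the bound just obtained (in the case $0<r<1$ one already gets $\Omega(\log x)$ from the integers $3^j$). Combined with the upper bound, $A_r(x)=\Theta_r((\log x)^{m+1})$.

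The step I expect to be the real obstacle is the distinctness of the numbers $N(k_1,\dots,k_m)$: a product $\prod(3^{k_i}+1)$ need not determine its exponent multiset once base-$3$ carries occur, which forces one to thin the family to a lacunary subfamily (super-increasing exponents, as above) and then verify that this subfamily still contributes $\Omega_m((\log x)^m)$ distinct leaders below $x$.
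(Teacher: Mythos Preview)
Your upper bound matches the paper's exactly. For the lower bound the paper takes a different route, using the \emph{nested} family
\[
N=\bigl((\cdots((3\cdot3^{n_k}+1)3^{n_{k-1}}+1)\cdots)3^{n_1}+1\bigr)3^{n_0}
\]
with $k=\lfloor r\rfloor$: one has $\cpx{N}\le 3(n_0+\cdots+n_k+1)+k$ and $\log_3 N>n_0+\cdots+n_k+1$, hence $\dft(N)<k\le r$; distinct tuples (with $n_i\ge1$ for $i\ge1$) give distinct $N$ automatically, since the $n_i$ can be read off as successive $3$-adic valuations, and setting $n_0=0$ forces $3\nmid N$. This yields roughly $\binom{\lfloor\log_3 x\rfloor}{k+1}$ elements of $A_r$ below $x$ and $\binom{\lfloor\log_3 x\rfloor}{k}$ leaders, handling both bounds at once. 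Your product family $\prod_i(3^{k_i}+1)$ also works, but the super-increasing condition you impose is exactly the price of using products rather than nesting: with nesting, injectivity comes for free. One small wrinkle in your passage from $B_r$ to $A_r$: the displayed inequality $A_r(x)\ge\tfrac12(\log_3 x)\,B_r(\sqrt x)$ is not literally valid, since distinct leaders in $B_r$ can differ by a power of $3$ (e.g.\ $107$ and $321$ are both leaders), producing collisions among the $3^jn$. This does no damage here because your constructed $N$ are all coprime to $3$, so restricting to that subfamily (rather than invoking all of $B_r(\sqrt x)$) gives the distinctness you need.
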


\begin{proof}
For the upper bound, it suffices to note that
$r=(\lfloor r \rfloor +1 )\frac{r}{\lfloor r \rfloor+1}$, and that
$\frac{r}{\lfloor r \rfloor+1}<1$,
and apply Proposition~\ref{indcount}.

For the lower bound, let $k=\lfloor r \rfloor$, and consider numbers of the
form
\[N=((\cdots((3\cdot3^{n_k}+1)3^{n_{k-1}}+1)\cdots)3^{n_1}+1)3^{n_0}.\]
Then
\[\cpx{N}\le3(n_0+\cdots+n_k+1)+k\]
and since $\log_3 N\ge n_0+\cdots+n_k+1$, this means $\dft(N)\le k$.
Furthermore, if $n_0=0$ and $n_1>0$ then $N$ is not divisible by $3$ and so is a
leader.  It is then easy to count that there are at least $\binom{\lfloor \log_3
x \rfloor}{k+1}\gtrsim \frac{1}{(k+1)!}(\log_3 x)^{k+1}$ such $N$ less than a
given $x$, and at least $\binom{\lfloor \log_3 x \rfloor}{k}\gtrsim
\frac{1}{k!}(\log_3 x)^k$ if we insist that $N$ be a leader.
\end{proof}

An immediate consequence of Theorem \ref{upperbound} is Theorem~\ref{indcount0}
in the introduction.

\begin{proof}[Proof of Theorem  \ref{indcount0}.]
The existence of numbers of arbitrarily large defect follows from the fact
that the set of integers of defect $< r$ has density zero.
\end{proof}

This result is a long way from proving a bound of the type $\cpx{n}\nsim 3\log_3
n$.

\section{Acknowledgements}

The authors are indebted to
J\=anis Iraids and Karlis Podnieks for supplying a wealth of numerical data.
We thank Jeffrey Lagarias for looking over an early draft of this paper and
elucidating just what it was we were doing, as well as for other help with
editing, and to Mike Zieve and David Rohrlich for providing assistance with
early drafts of the paper.  We thank Paul Pollack and Mike Bennett for pointing
out the paper \cite{Stewart}.
Most of all we thank Juan Arias de Reyna for greatly clarifying much of our
work, suggesting improved notation, shortening some proofs, and helping
extensively with structuring and editing of this paper. We thank the reviewer
for very helpful comments.

\appendix

\end{document}